\numberwithin{equation}{section}
\let\c@equation\c@subsection
\newtheorem{cor}[subsection]{Corollary}
\newtheorem{lem}[subsection]{Lemma}
\newtheorem{prop}[subsection]{Proposition}
\newtheorem{thm}[subsection]{Theorem}
\theoremstyle{definition}
\newtheorem{defn}[subsection]{Definition}
\newtheorem{rem}[subsection]{Remark}
\newtheorem{ex}[subsection]{Example}
\newtheorem{question}[subsection]{Question}
\theoremstyle{remark}
\newcommand{\secref}[1]{Section~\ref{#1}}
\renewcommand{\eqref}[1]{(\ref{#1})}
\tikzset{shorten <>/.style={shorten >=#1,shorten <=#1}}
\newcommand{\nc}{\newcommand}
\nc{\renc}{\renewcommand}
\nc{\ssec}{\subsection}
\nc{\sssec}{\subsubsection}
\nc{\on}{\operatorname}
\nc{\term}[1]{#1\xspace}
\DeclareMathSymbol{A}{\mathalpha}{operators}{`A}
\DeclareMathSymbol{B}{\mathalpha}{operators}{`B}
\DeclareMathSymbol{C}{\mathalpha}{operators}{`C}
\DeclareMathSymbol{D}{\mathalpha}{operators}{`D}
\DeclareMathSymbol{E}{\mathalpha}{operators}{`E}
\DeclareMathSymbol{F}{\mathalpha}{operators}{`F}
\DeclareMathSymbol{G}{\mathalpha}{operators}{`G}
\DeclareMathSymbol{H}{\mathalpha}{operators}{`H}
\DeclareMathSymbol{I}{\mathalpha}{operators}{`I}
\DeclareMathSymbol{J}{\mathalpha}{operators}{`J}
\DeclareMathSymbol{K}{\mathalpha}{operators}{`K}
\DeclareMathSymbol{L}{\mathalpha}{operators}{`L}
\DeclareMathSymbol{M}{\mathalpha}{operators}{`M}
\DeclareMathSymbol{N}{\mathalpha}{operators}{`N}
\DeclareMathSymbol{O}{\mathalpha}{operators}{`O}
\DeclareMathSymbol{P}{\mathalpha}{operators}{`P}
\DeclareMathSymbol{Q}{\mathalpha}{operators}{`Q}
\DeclareMathSymbol{R}{\mathalpha}{operators}{`R}
\DeclareMathSymbol{S}{\mathalpha}{operators}{`S}
\DeclareMathSymbol{T}{\mathalpha}{operators}{`T}
\DeclareMathSymbol{U}{\mathalpha}{operators}{`U}
\DeclareMathSymbol{V}{\mathalpha}{operators}{`V}
\DeclareMathSymbol{W}{\mathalpha}{operators}{`W}
\DeclareMathSymbol{X}{\mathalpha}{operators}{`X}
\DeclareMathSymbol{Y}{\mathalpha}{operators}{`Y}
\DeclareMathSymbol{Z}{\mathalpha}{operators}{`Z}
\nc{\sA}{\ensuremath{\mathcal{A}}\xspace}
\nc{\sB}{\ensuremath{\mathcal{B}}\xspace}
\nc{\sC}{\ensuremath{\mathcal{C}}\xspace}
\nc{\sD}{\ensuremath{\mathcal{D}}\xspace}
\nc{\sE}{\ensuremath{\mathcal{E}}\xspace}
\nc{\sF}{\ensuremath{\mathcal{F}}\xspace}
\nc{\sG}{\ensuremath{\mathcal{G}}\xspace}
\nc{\sH}{\ensuremath{\mathcal{H}}\xspace}
\nc{\sI}{\ensuremath{\mathcal{I}}\xspace}
\nc{\sJ}{\ensuremath{\mathcal{J}}\xspace}
\nc{\sK}{\ensuremath{\mathcal{K}}\xspace}
\nc{\sL}{\ensuremath{\mathcal{L}}\xspace}
\nc{\sM}{\ensuremath{\mathcal{M}}\xspace}
\nc{\sN}{\ensuremath{\mathcal{N}}\xspace}
\nc{\sO}{\ensuremath{\mathcal{O}}\xspace}
\nc{\sP}{\ensuremath{\mathcal{P}}\xspace}
\nc{\sQ}{\ensuremath{\mathcal{Q}}\xspace}
\nc{\sR}{\ensuremath{\mathcal{R}}\xspace}
\nc{\sS}{\ensuremath{\mathcal{S}}\xspace}
\nc{\sT}{\ensuremath{\mathcal{T}}\xspace}
\nc{\sU}{\ensuremath{\mathcal{U}}\xspace}
\nc{\sV}{\ensuremath{\mathcal{V}}\xspace}
\nc{\sW}{\ensuremath{\mathcal{W}}\xspace}
\nc{\sX}{\ensuremath{\mathcal{X}}\xspace}
\nc{\sY}{\ensuremath{\mathcal{Y}}\xspace}
\nc{\sZ}{\ensuremath{\mathcal{Z}}\xspace}
\nc{\bA}{\ensuremath{\mathbf{A}}\xspace}
\nc{\bB}{\ensuremath{\mathbf{B}}\xspace}
\nc{\bC}{\ensuremath{\mathbf{C}}\xspace}
\nc{\bD}{\ensuremath{\mathbf{D}}\xspace}
\nc{\bE}{\ensuremath{\mathbf{E}}\xspace}
\nc{\bF}{\ensuremath{\mathbf{F}}\xspace}
\nc{\bG}{\ensuremath{\mathbf{G}}\xspace}
\nc{\bH}{\ensuremath{\mathbf{H}}\xspace}
\nc{\bI}{\ensuremath{\mathbf{I}}\xspace}
\nc{\bJ}{\ensuremath{\mathbf{J}}\xspace}
\nc{\bK}{\ensuremath{\mathbf{K}}\xspace}
\nc{\bL}{\ensuremath{\mathbf{L}}\xspace}
\nc{\bM}{\ensuremath{\mathbf{M}}\xspace}
\nc{\bN}{\ensuremath{\mathbf{N}}\xspace}
\nc{\bO}{\ensuremath{\mathbf{O}}\xspace}
\nc{\bP}{\ensuremath{\mathbf{P}}\xspace}
\nc{\bQ}{\ensuremath{\mathbf{Q}}\xspace}
\nc{\bR}{\ensuremath{\mathbf{R}}\xspace}
\nc{\bS}{\ensuremath{\mathbf{S}}\xspace}
\nc{\bT}{\ensuremath{\mathbf{T}}\xspace}
\nc{\bU}{\ensuremath{\mathbf{U}}\xspace}
\nc{\bV}{\ensuremath{\mathbf{V}}\xspace}
\nc{\bW}{\ensuremath{\mathbf{W}}\xspace}
\nc{\bX}{\ensuremath{\mathbf{X}}\xspace}
\nc{\bY}{\ensuremath{\mathbf{Y}}\xspace}
\nc{\bZ}{\ensuremath{\mathbf{Z}}\xspace}
\nc{\dA}{\ensuremath{\mathds{A}}\xspace}
\nc{\dB}{\ensuremath{\mathds{B}}\xspace}
\nc{\dC}{\ensuremath{\mathds{C}}\xspace}
\nc{\dD}{\ensuremath{\mathds{D}}\xspace}
\nc{\dE}{\ensuremath{\mathds{E}}\xspace}
\nc{\dF}{\ensuremath{\mathds{F}}\xspace}
\nc{\dG}{\ensuremath{\mathds{G}}\xspace}
\nc{\dH}{\ensuremath{\mathds{H}}\xspace}
\nc{\dI}{\ensuremath{\mathds{I}}\xspace}
\nc{\dJ}{\ensuremath{\mathds{J}}\xspace}
\nc{\dK}{\ensuremath{\mathds{K}}\xspace}
\nc{\dL}{\ensuremath{\mathds{L}}\xspace}
\nc{\dM}{\ensuremath{\mathds{M}}\xspace}
\nc{\dN}{\ensuremath{\mathds{N}}\xspace}
\nc{\dO}{\ensuremath{\mathds{O}}\xspace}
\nc{\dP}{\ensuremath{\mathds{P}}\xspace}
\nc{\dQ}{\ensuremath{\mathds{Q}}\xspace}
\nc{\dR}{\ensuremath{\mathds{R}}\xspace}
\nc{\dS}{\ensuremath{\mathds{S}}\xspace}
\nc{\dT}{\ensuremath{\mathds{T}}\xspace}
\nc{\dU}{\ensuremath{\mathds{U}}\xspace}
\nc{\dV}{\ensuremath{\mathds{V}}\xspace}
\nc{\dW}{\ensuremath{\mathds{W}}\xspace}
\nc{\dX}{\ensuremath{\mathds{X}}\xspace}
\nc{\dY}{\ensuremath{\mathds{Y}}\xspace}
\nc{\dZ}{\ensuremath{\mathds{Z}}\xspace}
\nc{\bbA}{\ensuremath{\mathbb{A}}\xspace}
\nc{\bbB}{\ensuremath{\mathbb{B}}\xspace}
\nc{\bbC}{\ensuremath{\mathbb{C}}\xspace}
\nc{\bbD}{\ensuremath{\mathbb{D}}\xspace}
\nc{\bbE}{\ensuremath{\mathbb{E}}\xspace}
\nc{\bbF}{\ensuremath{\mathbb{F}}\xspace}
\nc{\bbG}{\ensuremath{\mathbb{G}}\xspace}
\nc{\bbH}{\ensuremath{\mathbb{H}}\xspace}
\nc{\bbI}{\ensuremath{\mathbb{I}}\xspace}
\nc{\bbJ}{\ensuremath{\mathbb{J}}\xspace}
\nc{\bbK}{\ensuremath{\mathbb{K}}\xspace}
\nc{\bbL}{\ensuremath{\mathbb{L}}\xspace}
\nc{\bbM}{\ensuremath{\mathbb{M}}\xspace}
\nc{\bbN}{\ensuremath{\mathbb{N}}\xspace}
\nc{\bbO}{\ensuremath{\mathbb{O}}\xspace}
\nc{\bbP}{\ensuremath{\mathbb{P}}\xspace}
\nc{\bbQ}{\ensuremath{\mathbb{Q}}\xspace}
\nc{\bbR}{\ensuremath{\mathbb{R}}\xspace}
\nc{\bbS}{\ensuremath{\mathbb{S}}\xspace}
\nc{\bbT}{\ensuremath{\mathbb{T}}\xspace}
\nc{\bbU}{\ensuremath{\mathbb{U}}\xspace}
\nc{\bbV}{\ensuremath{\mathbb{V}}\xspace}
\nc{\bbW}{\ensuremath{\mathbb{W}}\xspace}
\nc{\bbX}{\ensuremath{\mathbb{X}}\xspace}
\nc{\bbY}{\ensuremath{\mathbb{Y}}\xspace}
\nc{\bbZ}{\ensuremath{\mathbb{Z}}\xspace}
\nc{\mrm}[1]{\ensuremath{\mathrm{#1}}\xspace}
\nc{\mbf}[1]{\ensuremath{\mathbf{#1}}\xspace}
\nc{\mcal}[1]{\ensuremath{\mathcal{#1}}\xspace}
\nc{\msc}[1]{\ensuremath{\mathscr{#1}}\xspace}
\renc{\bar}[1]{\overline{#1}}
\let\sectsign\S
\let\S\relax
\nc{\sub}{\subset}
\nc{\too}{\longrightarrow}
\nc{\hook}{\hookrightarrow}
\nc*{\hooklongrightarrow}{\ensuremath{\lhook\joinrel\relbar\joinrel\rightarrow}}
\nc{\hooklong}{\hooklongrightarrow}
\nc{\twoheadlongrightarrow}{\relbar\joinrel\twoheadrightarrow}
\nc{\shiso}{\approx}
\nc{\isoto}{\xrightarrow{\sim}}
\nc{\isofrom}{\xleftarrow{\sim}}
\renc{\ge}{\geqslant}
\renc{\le}{\leqslant}
\renc{\geq}{\geqslant}
\renc{\leq}{\leqslant}
\nc{\id}{\mathrm{id}}
\nc{\can}{\mathrm{can}}
\let\Im\relax
\DeclareMathOperator{\Im}{\mathrm{Im}}
\DeclareMathOperator{\rk}{\mathrm{rk}}
\DeclareMathOperator{\Hom}{\mathrm{Hom}}
\nc{\uHom}{\underline{\smash{\Hom}}}
\DeclareMathOperator{\Aut}{\mathrm{Aut}}
\DeclareMathOperator{\End}{\mathrm{End}}
\nc{\Pre}{\mathrm{PSh}{}}
\nc{\uEnd}{\underline{\smash{\End}}}
\renc{\lim}{\operatorname*{lim}}
\nc{\colim}{\operatorname*{colim}}
\nc{\Cofib}{\on{Cofib}}
\nc{\Fib}{\on{Fib}}
\nc{\initial}{\varnothing}
\nc{\op}{\mathrm{op}}
\renc{\coprod}{\sqcup}
\nc{\bDelta}{\mbf{\Delta}}
\nc{\DM}{\mbf{DM}}
\nc{\eff}{\mathrm{eff}}
\nc{\bir}{\mathrm{bir}}
\nc{\SB}{\mathrm{SB}}
\nc{\veff}{\mathrm{veff}}
\nc{\cyc}{{\mrm{cyc}}}
\nc{\Cor}{{\mrm{Cor}}}
\nc{\corr}{{\on{corr}}}
\nc{\fet}{{\mrm{f\acute et}}}
\nc{\fsyn}{{\mrm{fsyn}}}
\nc{\fflat}{{\mrm{fflat}}}
\nc{\syn}{{\mrm{syn}}}
\nc{\Br}{{\mrm{Br}}}
\nc{\Perf}{\mathcal{P}\mrm{erf}}
\nc{\QCoh}{\mathcal{Q}\mrm{Coh}}
\nc{\Az}{\mathcal{A}\mrm{z}}
\nc{\Pic}{\mrm{Pic}}
\nc{\Nrd}{\mrm{Nrd}}
\nc{\perf}{\mrm{perf}}
\nc{\oblv}{\on{oblv}}
\nc{\exact}{\on{exact}}
\nc{\F}{{\on{F}}}
\nc{\clopen}{{\mrm{clopen}}}
\nc{\B}{\mrm{B}}
\nc{\D}{\mrm{D}}
\nc{\Fin}{\on{Fin}}
\nc{\Cut}{\on{Cut}}
\nc{\Cart}{\on{Cart}}
\nc{\pairs}{\mathsf{pairs}}
\nc{\Pairs}{\mathrm{Pair}}
\nc{\Trip}{\mathrm{Trip}}
\nc{\Lab}{\mathrm{Lab}}
\nc{\coCart}{\mathrm{coCart}}
\nc{\RKE}{\mathrm{RKE}}
\nc{\strict}{\mathrm{strict}}
\nc{\Emb}{\mathrm{Emb}}
\nc{\EMB}{\mathcal{E}\mathrm{mb}}
\nc{\Split}{\mathrm{Split}}
\nc{\Set}{\mathrm{Set}}
\nc{\sSets}{\mathrm{sSets}}
\nc{\pb}{\mathrm{pb}}
\nc{\lci}{\mathrm{lci}}
\nc{\fib}{\mathrm{fib}}
\nc{\cofib}{\mathrm{cofib}}
\nc{\diff}{\mrm{diff}}
\nc{\gp}{\mrm{gp}}
\nc{\ind}{\mrm{ind}}
\nc{\chr}{\mrm{char}}
\nc{\mgp}{\mrm{mot-gp}}
\nc{\FSyn}{\mrm{FSyn}}
\nc{\FFlat}{{\mrm{FFlat}}}
\nc{\FEt}{\mrm{FEt}}
\nc{\Spc}{\mrm{Spc}}
\nc{\Ob}{\mrm{Ob}}
\nc{\Spt}{\mrm{Spt}}
\nc{\tors}{\mrm{tors}}
\nc{\T}{\bT}
\nc{\suspinf}{\Sigma^\infty}
\nc{\h}{\mrm{h}}
\nc{\uhom}{\underline{\mathrm{Hom}}}
\nc{\umap}{\underline{\mathrm{Maps}}}
\nc{\Map}{\mathrm{Map}}
\nc{\map}{\operatorname{map}}           
\nc{\Autom}{\mathrm{Aut}}
\renc{\H}{\bH}
\nc{\Einfty}{{\sE_\infty}}
\nc{\Eone}{{\sE_1}}
\nc{\Stab}{\mrm{Stab}}
\nc{\lax}{{\mrm{lax}}}
\nc{\cocart}{{\mrm{cocart}}}
\nc{\Sch}{\mrm{Sch}}
\nc{\dSch}{\mrm{dSch}}
\nc{\Aff}{\mrm{Aff}}
\nc{\SmAff}{\mrm{SmAff}}
\nc{\dAff}{\mrm{dAff}}
\nc{\Fr}{\on{Fr}}
\nc{\A}{\mathbf{A}}
\nc{\Pp}{\mathbf{P}}
\nc{\Ss}{\mathbf{S}}
\nc{\N}{\mathbf{N}}
\nc{\Z}{\mathbf{Z}}
\nc{\Q}{\mathbf{Q}}
\nc{\Oo}{\mathcal{O}} 
\nc{\Aa}{\mathcal{A}} 
\nc{\Bb}{\mathcal{B}} 
\nc{\Ee}{\mathcal{E}}
\nc{\Ff}{\mathcal{F}} 
\nc{\Gg}{\mathcal{G}} 
\nc{\red}{{\on{red}}}
\nc{\Voev}{{\on{Voev}}}
\nc{\Corr}{\mrm{Corr}}
\nc{\Span}{\mathbf{Corr}}
\nc{\Gap}{\mrm{Gap}}
\nc{\Filt}{\mrm{Filt}}
\nc{\Corrfr}{\Corr^{\fr}}
\nc{\Corrvfr}{\Corr^{\Vfr}}
\nc{\Spec}{\on{Spec}}
\nc{\Sm}{\mrm{Sm}}
\nc{\QSm}{\mrm{QSm}}
\nc{\Gm}{\mathbf{G}_{\mrm{m}}}
\renc{\P}{\bP}
\nc{\nis}{\mathrm{nis}}
\nc{\KH}{\mathrm{KH}}
\nc{\bil}{\mathrm{bil}}
\nc{\Zar}{\mathrm{Zar}}
\nc{\zar}{\mathrm{zar}}
\nc{\Nis}{\mathrm{Nis}}
\nc{\et}{\mathrm{\acute et}}
\nc{\all}{\mathrm{all}}
\nc{\fold}{\mathrm{fold}}
\nc{\Fun}{\mathrm{Fun}}
\nc{\Ho}{\mathrm{Ho}}
\nc{\Segal}{\mathrm{Segal}}
\nc{\Mon}{\mrm{Mon}{}}
\nc{\Ab}{\mrm{Ab}}
\nc{\Gr}{\mrm{Gr}}
\nc{\Sh}{\on{Sh}}
\nc{\M}{\mrm{M}}
\nc{\Lhtp}{L_{\A^1}}
\nc{\Lzar}{L_{\Zar}}
\nc{\Lnis}{L_{\Nis}}
\nc{\Lmot}{L_{\mrm{mot}}}
\nc{\mot}{\mrm{mot}}
\nc{\SH}{\mbf{SH}}
\nc{\HH}{\mbf{H}}
\nc{\RR}{\mbf{R}}
\nc{\CC}{\mbf{C}}
\nc{\Mod}{\mrm{Mod}}
\nc{\MonUnit}{\mbf{1}}
\nc{\tr}{\on{tr}}
\nc{\vop}{\mrm{vop}}
\nc{\fr}{{\on{fr}}}
\nc{\Ar}{\mrm{Ar}}
\nc{\Vfr}{\on{Vfr}}
\nc{\frdiff}{{\on{frdiff}}}
\nc{\frGys}{\on{frGys}}
\nc{\SHfr}{\SH^{\fr}}
\nc{\SHfrdiff}{\SH^{\frdiff}}
\nc{\SHfrGys}{\SH^{\frGys}}
\nc{\InftyCat}{\infty\textnormal{-}\mrm{Cat}}
\nc{\TriCat}{\mathrm{TriCat}}
\nc{\Cat}{\mathrm{1\textnormal{-}Cat}}
\nc{\Th}{\on{Th}}
\def\G{\bG}
\nc{\CMon}{\mrm{CMon}{}}
\nc{\CAlg}{\mrm{CAlg}{}}
\nc{\MGL}{\mrm{MGL}}
\nc{\PMGL}{\mathrm{PMGL}}
\nc{\KGL}{\mrm{KGL}}
\nc{\kgl}{\mrm{kgl}}
\nc{\MSL}{\mrm{MSL}}
\nc{\MSp}{\mrm{MSp}}
\nc{\Seg}{\mrm{Seg}{}}
\nc{\Tw}{\mrm{Tw}}
\nc{\sslash}{/\mkern-6mu/}
\nc{\PrL}{\mrm{Pr}^\mrm{L}}
\nc{\PrR}{\mrm{Pr}^\mrm{R}}
\nc{\pr}{\mrm{pr}}
\nc{\efr}{\mrm{efr}}
\nc{\nfr}{\mrm{nfr}}
\nc{\dfr}{\mrm{fr}}
\nc{\tfr}{\mrm{tfr}}
\nc{\Vect}{\mathcal{V}\mrm{ect}}
\nc{\sVect}{\mrm{sVect}}
\nc{\Alg}{\mrm{Alg}}
\nc{\fix}{\mrm{fix}}
\nc{\Hilb}{\mathrm{Hilb}}
\nc{\flci}{\mathrm{flci}}
\nc{\Isom}{\mathrm{Isom}}
\nc{\GL}{\mathrm{GL}}
\nc{\BGL}{\mathrm{BGL}}
\nc{\PGL}{\mathrm{PGL}}
\nc{\SL}{\mathrm{SL}}
\nc{\Sp}{\mathrm{Sp}}
\nc{\fin}{\mathrm{fin}}
\nc{\cl}{\mathrm{cl}}
\nc{\cn}{\mathrm{cn}}
\nc{\sm}{\mathrm{sm}}
\nc{\heart}{\heartsuit}
\renc{\o}{\mrm{or}}
\nc{\GW}{\mrm{GW}}
\nc{\ev}{\mrm{ev}}
\nc{\FSYN}{\mathcal{FS}\mrm{yn}}
\nc{\FFLAT}{\mathcal{FF}\mrm{lat}}
\nc{\mrk}{\mrm{mrk}}%
\nc{\FFmrk}{\mathcal{FF}\mrm{lat}^{\mrk}}
\nc{\FFnu}{\mathcal{FF}\mrm{lat}^{\mrm{nu}}}
\nc{\FFbas}{\mathcal{FF}\mrm{lat}^{\mrm{bas}}}
\nc{\FQSM}{\mathcal{FQS}\mathrm{m}}
\nc{\Quot}{\mathrm{Quot}}    
\nc{\COH}{\mathcal{C}\mathrm{oh}}
\let\phi\varphi
\nc{\robber}{\mathcal{R}}%
\nc{\mv}{\mrm{mv}}
\nc{\const}{\mrm{const}}
\nc{\robbermv}{\robber^{\mv}}
\nc{\robberconst}{\robber^{\const}}
\nc{\robbernot}{\robber_0}%
\nc{\sectionmv}{i^{\mv}}%
\nc{\sectionconst}{i^{\const}}%
\nc{\sectionnot}{i}%
\nc{\st}{\mathrm{st}}
\nc{\inftyCat}{\term{$\infty$-category}}
\nc{\inftyCats}{\term{$\infty$-categories}}
\nc{\inftyOneCat}{\term{$(\infty,1)$-category}}
\nc{\inftyOneCats}{\term{$(\infty,1)$-categories}}
\nc{\inftyGrpd}{\term{$\infty$-groupoid}}
\nc{\inftyGrpds}{\term{$\infty$-groupoids}}
\nc{\inftyTop}{\term{$\infty$-topos}}
\nc{\inftyTops}{\term{$\infty$-toposes}}
\nc{\inftyTwoCat}{\term{$(\infty,2)$-category}}
\nc{\inftyTwoCats}{\term{$(\infty,2)$-categories}}
\title{Twisted K-theory in motivic homotopy theory}
\author[E. Elmanto]{Elden Elmanto}
\address{IMJ-PRG and CNRS\\
SU  4 place Jussieu\\
Case 247\\
75252 Paris\\
France}
\email{\href{mailto:eldenelmanto@gmail.com}{eldenelmanto@gmail.com}}
\urladdr{\url{https://www.eldenelmanto.com/}}
\author[D. Nardin]{Denis Nardin}
\address{Fakultät für Mathematik\\
Universität Regensburg\\
93040 Regensburg\\
Deutschland}
\email{\href{mailto:denis.nardin@ur.de}{denis.nardin@ur.de}}
\urladdr{\url{https://homepages.uni-regensburg.de/~nad22969/}}
\author[M. Yakerson]{Maria Yakerson}
\address{Institute for Mathematical Research (FIM)\\
ETH Z\"urich \\
R\"amistr. 101\\  
8092 Z\"urich\\
Switzerland}
\email{\href{mailto:maria.yakerson@math.ethz.ch}{maria.yakerson@math.ethz.ch}}
\urladdr{\url{https://www.muramatik.com}}
\date{\today}
\begin{document}

\maketitle

\begin{abstract}
In this paper, we study twisted algebraic $K$-theory from a motivic viewpoint. For a smooth variety $X$ over a field of characteristic zero and an Azumaya algebra $\Aa$ over $X$, we construct the  $\Aa$-twisted motivic spectral sequence, by computing the slices of the motivic twisted algebraic $K$-theory spectrum as a twisted form of motivic cohomology. This generalizes previous results due to Kahn-Levine where $\Aa$ is assumed to be pulled back from a base field. Our methods use interaction between the slice filtration and birational geometry. 
Along the way, we prove a representability result, expressing the motivic space of twisted $K$-theory as an extension of the twisted Grassmannian by the sheaf of ``twisted integers". This leads to a proof of cdh descent and Milnor excision for twisted homotopy $K$-theory.
\end{abstract}

\parskip 0.2cm
\parskip 0pt
\tableofcontents
\parskip 0.2cm
\vspace{-2em}

 \section{Introduction}
 
Twists naturally occur in mathematics whenever locally-defined entities fail to glue to a global object. We are particularly interested in twists by Brauer classes in the context of algebraic geometry. We first recall some of the cases when such twists appear, see also~\cite{lieblich-ubiquity} for more examples.

\begin{ex} Suppose that $p\colon X \rightarrow Y$ is a projective morphism such that all of its geometric fibers are isomorphic to a projective space of fixed rank. Examples of such morphisms arise from taking the projectivization of a vector bundle $\sE$ on $Y$. However, there are many more examples: $X$ is a \emph{Severi-Brauer} scheme if 
it is \'etale-locally on $Y$ isomorphic to $\P^n \times Y$. The failure of $p$ to be a projective bundle is measured by the associated Brauer class $\alpha \in \mathrm{Br}(Y)$, which  is an \'etale $2$-cocycle valued in $\Gm$. 
\end{ex}

\begin{ex}\label{ex:2} If $X$ is a smooth projective variety over a field, one can consider a moduli space of sheaves (with stability or semistability conditions) $\mathcal{N}$. Such a moduli space usually exists a coarse space but may not exist as a fine moduli space. 
 The problem is that the universal sheaf exists locally but may not glue together to form a global sheaf. In certain cases, C\u{a}ld\u{a}raru proves that a Brauer class $\alpha$ on $\mathcal{N}$ obstructs the existence of a universal sheaf and that a universal $\alpha$-\emph{twisted} sheaf exists instead. He further proves a derived equivalence between the perfect complexes on $X$ and twisted perfect complexes on $\mathcal{N}$ \cite[Theorem 5.5.1]{caldararu-thesis}.
\end{ex}

The goal of this paper is to improve certain tools for studying twisted version of algebraic K-theory. For a smooth $k$-scheme $X$ and $\Aa$ an Azumaya algebra over $X$, the \emph{$\Aa$-twisted K-theory} space $K^\Aa(X)$ is the K-theory space of the stable $\infty$-category of $\Aa$-twisted perfect complexes on $X$. By construction, the $\Aa$-twisted K-theory only depends on the equivalence class of $\Aa$ in the Brauer group of $X$; in other words, it is \emph{Morita invariant}. The $\Aa$-twisted K-theory groups $K^\Aa_*(X)$, defined as the homotopy groups of the space $K^\Aa(X)$, first appeared in Quillen's computation of the $K$-theory of Severi-Brauer varieties~\cite{Quillen:1973}.

\begin{rem}
The space $K^\Aa(X)$ can be defined more generally for any Brauer class $\Aa$, not necessarily representable by an Azumaya algebra. While in this paper we will concentrate for simplicity on the case when such a representing object exists, all the proofs will work in the more general case as well, as they always proceed by reduction to the case where $\Aa$ is pulled back from a field where all Brauer classes are representable.
\end{rem}

 When $\Aa$ is trivial in the Brauer group, one of the main tools for  accessing the $K$-theory groups $K_*(X)$ is the \emph{motivic spectral sequence}
 \begin{equation}\label{eq:mss}
E_2^{p,q}=CH^{-q}(X;-p-q)\Rightarrow K_{-p-q}(X),
 \end{equation}
where $CH^*(X;*)$ are Bloch's \emph{higher Chow groups}. The spectral sequence~\eqref{eq:mss} has many implications: for example, it is a key ingredient in Voevodsky's proof of the Quillen-Lichtenbaum conjectures \cite{Voevodsky:2008}. The spectral sequence degenerates after rationalization and implies (a variant of) the Grothendieck-Riemann-Roch theorem:
\[
K_0(X)_{\Q} \simeq \bigoplus^{\dim(X)}_{i=0} CH^i(X)_{\Q}.
\]

Bloch's higher Chow groups provide a cohomology theory for schemes which satisfies Beilinson's desiderata for a theory of ``motivic cohomology"; see for example \cite{FSV-intro} for the list of properties. Voevodsky's construction of motivic cohomology groups compares with Bloch's higher Chow groups via an isomorphism
\[
CH^{q}(X;2q-p) \simeq H^p(X;\Z(q))
\]
for smooth schemes over a perfect field \cite{Voevodsky:2002b}. In light of the last isomorphism, the main result of this paper is an extension of~\eqref{eq:mss} to the twisted setting.

\begin{thm}[Corollary~\ref{cor:main}]\label{thm:main-intro} Let $k$ be a field of characteristic zero, $X$ a smooth $k$-scheme and $\Aa$ an Azumaya algebra over $X$. Then there is a strongly convergent spectral sequence
    \begin{equation}\label{eq:tw-mss}
    E^{p,q}_2=H^{p-q}_{\mot}(X;\Z^\Aa(-q))\Rightarrow K_{-p-q}^\Aa(X),
    \end{equation}
    where $E_2$-terms are the $\Aa$-twisted motivic cohomology groups.
    For a field $k$ of characteristic $p$, the same result holds after inverting $p$.
    \end{thm}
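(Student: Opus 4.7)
The plan is to realize $\Aa$-twisted algebraic K-theory as a motivic spectrum $\KGL^\Aa \in \SH(X)$ and obtain~\eqref{eq:tw-mss} from its Voevodsky slice filtration, in parallel with how~\eqref{eq:mss} arises from the slice filtration on $\KGL$ together with the Voevodsky-Levine identification $s_q \KGL \simeq \Sigma^{2q,q} H\Z$. The main goal is to prove
\begin{equation*}
s_q \KGL^\Aa \;\simeq\; \Sigma^{2q,q} H\Z^\Aa,
\end{equation*}
where $H\Z^\Aa$ is the twisted motivic Eilenberg-MacLane spectrum representing $\Aa$-twisted motivic cohomology. Given this, the slice spectral sequence of the direct image of $\KGL^\Aa$ along $X \to \Spec k$ has the claimed $E_2$-page and converges to $K_{-p-q}^\Aa(X)$.

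The slice identification is the main obstacle and is where the birational techniques announced in the abstract become essential. My plan is a two-step reduction. First, \'etale-locally on $X$, any Azumaya algebra is Morita equivalent to a trivial one, so $\KGL^\Aa$ becomes $\KGL$ and Voevodsky-Levine gives the identification $s_q \KGL \simeq \Sigma^{2q,q} H\Z$; since the slice functors commute with smooth base change, this supplies a candidate equivalence on an \'etale cover trivializing $\Aa$. Second, to globalize over $X$, I would exploit the birational nature of both sides: the slices $s_q \KGL^\Aa$ are controlled by their values at the generic points of $X$, and at such a generic point $\Spec k(X)$ the Brauer class $[\Aa|_{k(X)}]$ is pulled back from a field, which is the Kahn-Levine setting, whose slice computation matches the twisted Eilenberg-MacLane spectrum. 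The representability result advertised in the abstract --- expressing the underlying motivic space of $\KGL^\Aa$ as an extension of the twisted Grassmannian by the sheaf of ``twisted integers'' --- then gives the rigidity needed to promote the \'etale-local and generic equivalences to a global identification with $\Sigma^{2q,q} H\Z^\Aa$.

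Strong convergence of the resulting spectral sequence should follow by standard slice filtration methods: slice connectivity is an \'etale-local property, and Levine's strong convergence for the untwisted slice spectral sequence on smooth schemes over a perfect field in characteristic zero transfers to $\KGL^\Aa$ after passing to an \'etale cover trivializing $\Aa$. In characteristic $p > 0$, Voevodsky's identification of the slices of $\KGL$ with motivic cohomology is only available after inverting $p$, which accounts for the corresponding hypothesis in the theorem.
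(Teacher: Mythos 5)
Your high-level outline is correct: realize $\KGL^\Aa$ in $\SH(X)$, compute $s_q\KGL^\Aa \simeq \Sigma^{2q,q}H\Z^\Aa$, and take the slice spectral sequence. But the proposal glosses over three points that are exactly where the paper's technical work lives, and as stated the plan would not go through.

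First, you never construct a global comparison map. Working \'etale-locally where $\Aa$ trivializes gives an equivalence $s_q\KGL \simeq \Sigma^{2q,q}H\Z$ on a cover, but the entire content of the twist is that this does not obviously glue: the descent data on the two sides must be matched, and that matching is the nontrivial ingredient. The paper avoids this by constructing a global map directly, namely the twisted rank map $\rk^\Aa\colon\Vect^\Aa\to\Z^\Aa$ defined via $P\mapsto\sqrt{\rk_\Oo\End_\Aa(P)}$, which induces $\kgl^\Aa\to H\Z^\Aa := \Sigma^\infty_\fr\Z^\Aa$; one then checks locally that this map realizes $s_0$.

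Second, ``slices are controlled by their values at generic points'' is not a theorem one can invoke, and the birational techniques are used in a quite different way. The actual reduction to the Kahn--Levine field case goes: first Nisnevich-locally (via a Popescu-type approximation argument, Lemma~\ref{lem:abelian}) down to complete local rings $\widehat{\Oo_{X,x}}$; then the Cohen structure theorem gives an ind-smooth retraction $r\colon\Spec\widehat{\Oo_{X,x}}\to\Spec\kappa(x)$; and finally $i^*\colon\Br(\widehat{\Oo_{X,x}})\to\Br(\kappa(x))$ is an isomorphism (Milne), so $\Aa$ is Brauer equivalent to $r^*i^*\Aa$, which is pulled back from the residue field. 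This is what makes the reduction precise; residue fields at arbitrary (not just generic) points appear, and this is why the statement requires $X$ to be a Grothendieck scheme.

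Third, you mention the fiber sequence $\Gr^\Aa_\infty\to K^\Aa\to\Z^\Aa$ as providing ``rigidity,'' but the real use is more concrete: together with the recognition principle it identifies $\fib(\kgl^\Aa\to H\Z^\Aa)\simeq\Sigma^\infty_\fr\Gr^\Aa_\infty$, and then the key geometric fact is that the twisted Grassmannian is $L^0_\bir$-contractible (each $\Gr^\Aa_{md,n}$ contains the dense open chart $\Hom_\Aa(\Aa^m,\Aa^{n-m})\simeq\Aa^{m(n-m)}$, a vector bundle). Combined with the birational criterion for $s_0$-equivalences from~\cite{tom-elden-slices}, this shows the fiber has vanishing $0$-slice, i.e.\ $\kgl^\Aa\to H\Z^\Aa$ is an $s_0$-equivalence. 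Without this step there is no identification of the slice at all. Finally, strong convergence requires that $\kgl^\Aa$ is very effective; the paper proves this by realizing twisted K-theory as a summand of the K-theory of the Severi--Brauer scheme $\SB(\Aa)$ (Bernardara), not by an \'etale-local connectivity argument, which would need justification since very effectivity is not obviously an \'etale-local property.
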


When $\Aa$ is pulled back from the base field $k$, the spectral sequence~\eqref{eq:tw-mss} is one of the main results of the work of  Kahn and Levine on twisted K-theory~\cite[Theorem~1]{levine-kahn}. In this case, the $\Aa$-twisted motivic cohomology groups have a cycle-theoretic interpretation. Namely, they can be defined as $\Aa$-twisted higher Chow groups: the generators are algebraic cycles labelled by ``$\Aa$-twisted integers" (see Proposition~\ref{prop:cycle-model-of-Z^A}). The sheaf of $\Aa$-twisted integers $\Z^\Aa$ is the sheafification of the presheaf $K_0^\Aa$, and the twist encodes the index of $\Aa$. In the case of a general $\Aa$, the cycle-theoretic description holds locally, and we expect that it applies globally as well (see Remark~\ref{rem:nis-local}) and that the spectral sequence \eqref{eq:tw-mss} coincides with the spectral sequence of \cite[Corollary 6.1.4]{levine-kahn}.

The spectral sequence~\eqref{eq:tw-mss} results from the computation of slices of the motivic spectrum $\KGL^\Aa$, representing twisted algebraic K-theory. Under the assumptions of Theorem~\ref{thm:main-intro}, we compute the slices $s_q$ with respect to Voevodsky's slice filtration as follows (see Theorem~\ref{thm:s_0KGL-general-base}):\begin{equation}\label{eq:tw-slices}
s_q\KGL^{\sA} \simeq \Sigma^q_{\P^1} H\bZ^{\sA} \qquad \forall q \in \bbZ.
\end{equation}

Here $H\bZ^{\sA}$ is the $\Aa$-twisted motivic cohomology spectrum, which we construct in Definition~\ref{def:twisted-mot}. When the Brauer class of $\sA$ is trivial, the spectrum $H\bZ^{\sA}$ coincides with Spitzweck's motivic cohomology spectrum \cite{SpitzweckHZ}, which, in turn, represents Bloch's higher Chow groups for smooth schemes over a field.

The computation of slices of twisted K-theory in~\cite[Theorem~1]{levine-kahn} goes through the twisted version of Levine's homotopy coniveau tower~\cite{Levine:2008}. We simplify some of the crucial steps in the proof by means of the techniques of Bachmann and the first  author developed in~\cite{tom-elden-slices}, which make use of birational geometry. We elaborate more on the differences between our approach and~\cite{levine-kahn} in the end of the Introduction.

An important ingredient in our proof is representability of twisted K-theory, which has its own implications. One of the main results of the foundational work of Morel and Voevodsky on $\A^1$-homotopy theory~\cite{MV} was the representability of algebraic K-theory motivic space by the ind-smooth scheme $\Z \times \Gr_\infty$, where $\Gr_\infty$ is the infinite Grassmannian. This result allows to deduce that the motivic K-theory spectrum $\KGL$ is stable under pullback, which in turn implies  that homotopy $K$-theory satisfies cdh descent \cite{Cisinski} and Milnor excision \cite{EHIK}.

An immediate analogue of representability for twisted K-theory fails, but we show that there is a fiber sequence of motivic spaces (see~Theorem~\ref{thm:fiber seq K^A}):
\begin{equation}\label{eq:fib}
\Gr^\Aa_\infty \to  K^{\Aa} \rightarrow \Z^\Aa.
\end{equation}
This sequence splits Zariski locally, and it splits globally if $\Aa$ is trivial.

A key geometric ingredient in our computation~\eqref{eq:tw-slices} of slices of the motivic $\Aa$-twisted K-theory spectrum $\KGL^\Aa$ is rationality of the ind-scheme $\Gr_\infty^\Aa$. Moreover, we show that the terms of the fiber sequence~\eqref{eq:fib} are stable under pulback and deduce the following result.

\begin{thm}[Theorem~\ref{thm:pull}]\label{thm:intro2} Let $X$ be a regular scheme of finite Krull dimension, and $\Aa$ an Azumaya algebra over $X$. Then the $\Aa$-twisted homotopy K-theory presheaf of spectra 
on the category of $X$-schemes satisfies Milnor excision and cdh descent.
\end{thm}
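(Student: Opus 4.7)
The proof I have in mind deduces Theorem~\ref{thm:pull} from the pullback stability of the motivic twisted $K$-theory spectrum $\KGL^\Aa$, following the templates of \cite{Cisinski} for cdh descent and of \cite{EHIK} for Milnor excision. The two essential inputs are provided just above the statement: the fiber sequence $\Gr_\infty^\Aa \to K^\Aa \to \Z^\Aa$ of motivic spaces over $X$, together with the assertion that each of its three terms is stable under pullback.

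The first step is to assemble the motivic $\Pp^1$-spectrum $\KGL^\Aa$ representing $\Aa$-twisted $K$-theory on $\Sm_X$, by $\Pp^1$-stabilizing $K^\Aa$ via a twisted Bott element (which becomes available once one trivializes $\Aa$ \'etale-locally). Because pullback is exact in motivic spaces and each term of the fiber sequence \eqref{eq:fib} is pullback-stable, the spectrum $\KGL^\Aa$ is itself stable under pullback along arbitrary morphisms between regular $X$-schemes.

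The second step is to invoke the general principle, abstracted from \cite{Cisinski} and \cite{EHIK}, that any motivic $\Pp^1$-spectrum $E$ over a regular base which is stable under pullback gives rise, via left Kan extension along $\Sm_X \hookrightarrow \Sch/X$, to a presheaf of spectra satisfying both cdh descent and Milnor excision. In both papers, the spectrum $\KGL$ enters only through pullback stability together with Nisnevich descent and $\A^1$-invariance, all three of which $\KGL^\Aa$ satisfies by construction. Instantiating this principle with $\KGL^\Aa$ then yields cdh descent and Milnor excision for the twisted homotopy $K$-theory presheaf $\KH^\Aa$ on all $X$-schemes, Weibel's homotopy $K$-theory construction agreeing with the presheaf $\Map(\MonUnit,\KGL^\Aa)$ on regular schemes by pullback stability.

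The main technical difficulty is concentrated in the first step, namely the pullback stability of $\Gr_\infty^\Aa$ and of $\Z^\Aa$. The twisted Grassmannian $\Gr_\infty^\Aa$ is an \'etale-local twist of the ind-smooth scheme $\Gr_\infty$ and pulls back correctly by faithfully flat descent, but the sheaf $\Z^\Aa$ of twisted integers is subtler, and showing its pullback stability requires tracking how the local index of $\Aa$ interacts with base change among regular schemes. Once this geometric input is in hand, the remainder of the argument is entirely formal, running parallel to the untwisted case line by line.
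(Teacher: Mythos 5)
Your proposal follows essentially the same route as the paper: reduce pullback stability of $\KGL^\Aa$ to pullback stability of the motivic space $K^\Aa$ via the fiber sequence $\Gr_\infty^\Aa \to K^\Aa \to \Z^\Aa$, prove pullback stability of $\Z^\Aa$ (the real technical content, which the paper handles in \propref{prop:pull_Z^A} by a stalkwise argument using the smooth schemes $\Gr^\Aa_{m,n}$ and $\operatorname{Iso}_\Aa(V,W)$), and then cite \cite{Cisinski} and \cite{EHIK}; the identification of the representable presheaf with $KH^\Aa$ is \propref{prop:KH-represents}. Two small inaccuracies worth flagging: Bott periodicity for $K^\Aa$ (\corref{cor:P1}) is established globally via the Beilinson decomposition of $\Perf^\Aa(\Pp^1_S)$, not by an \'etale-locally constructed twisted Bott element; and the presheaf on $\Sch_X$ that enjoys cdh descent is the mapping spectrum $T \mapsto \map_{\SH(T)}(\MonUnit, \KGL^\Aa_T)$ rather than a left Kan extension from $\Sm_X$ — these are different operations, though the conclusion is the same.
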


 \subsection*{Vista} Let $X$ be a smooth $k$-variety with the structure morphism $p\colon X \rightarrow \Spec k$ and $\alpha \in \Br(X)$ be a Brauer class. We can define the \emph{$\alpha$-twisted motive of $X$} to be 
 \[
 M(X, \alpha) := p_{\#}H\Z^{\alpha}.
\]
This motivic spectrum is canonically an $H\Z$-module and thus defines an object in the category of Voevodsky's motives $\DM(k)$. When $X = \Spec k$, this object has been studied in~\cite{levine-kahn}, but it is new otherwise, and so we can ask for ``twisted" analogs of some conjectures and theorems on smooth varieties with many interesting Brauer classes.

One family of examples of such viarieties is given by $K3$ surfaces; see \cite[Chapter 18]{huybrechts-lectures} for a discussion on the Brauer-theoretic aspects of a $K3$ surface. A fascinating result, due to Huybrechts, is that twisted derived equivalent $K3$ surfaces give rise to isomorphic \emph{rational} Chow motives~\cite{h-1,h-2}. Since twists disappear after rationalization, one can then wonder if there is an integral refinement of Huybrechts' results.

\begin{question}\label{quest" K3} If $(X, \alpha)$ and $(X',\alpha')$ are two twisted $K3$ surfaces with equivalent categories of twisted perfect complexes, are their motives $M(X,\alpha)$ and $M(X',\alpha')$ equivalent? 
\end{question}

Answering Question~\ref{quest" K3} requires a better understanding of the twisted spectral sequence~\eqref{eq:mss}, since $(X,\alpha)$ and $(X',\alpha')$ have isomorphic twisted $K$-theory groups. 
It is subject to further investigation by the authors. 

\subsection*{Comparison with other works}\label{sect:compare}

Let us elaborate on the comparison of Theorem~\ref{thm:main-intro} with~\cite[Theorem 1]{levine-kahn}, when the Azumaya algebra  $\sA$ is pulled back from the base field $k$. Then the statement of \cite[Theorem 1]{levine-kahn} differs in two ways. First, in~\cite{levine-kahn} the slices are computed in the category of $\Ss^1$-spectra $\SH^{\Ss^1}(k)$, i.e., the category of $\A^1$-invariant, Nisnevich sheaves of spectra, rather than the category  of  motivic $\P^1$-spectra.
Second, their result only assumes that $k$ is a perfect field, without inverting the characteristic.

In Theorem~\ref{thm:s_0KGL=HZ} we give a different proof of the original result of Kahn-Levine in the same generality as the original: there is no assumption on the characteristic of $k$, as long as $\sA$ is pulled back from the base field. To get a statement in the form presented in \cite{levine-kahn}, i.e. on the level of $\Ss^1$-spectra, one can  use~\cite[Conjecture 3]{Voevodsky:2002c}, which asserts a compatibility of slice towers under the canonical functor $\omega^{\infty}\colon \SH(k) \rightarrow \SH^{\Ss^1}(k)$. This conjecture was proved by Levine~\cite[Theorem 9.0.3]{LevineChow} and has a different proof in~\cite[Theorem~17]{tom-elden-slices} in the flavour of the present paper. 

One of the main points of the present work is to emphasize that the machinery of motivic homotopy theory provides a rather simple candidate for $\sA$-twisted motivic cohomology, see Definition~\ref{def:twisted-mot}. The cohomology groups appearing in Theorem~\ref{thm:main-intro} (and its proof) do not make reference to Bloch's cycle complexes, which present real technical challenges in the theory. In particular, our proof  avoids the key localization lemma~\cite[Lemma 6.3.1]{levine-kahn}. On the other hand, when the twist $\sA$ is trivial, our definition coincides with all other definitions of motivic cohomology (this latter statement, unfortunately, does not avoid the use of moving lemmas).

Another approach for twisting the motivic K-theory spectrum was introduced by Spitzweck and \O stv\ae r in \cite{spitzweck-ostvaer}. Their approach closely follows the formalism of twisted cohomology theories in topology.  
In particular, they constructed a version of of the twisted motivic spectral sequence in \cite[Theorem 5.17]{spitzweck-ostvaer}. However, their formalism only works for twists by classes in the group $H_{\mot}^{3}(X; \Z(1))$. For a general motivic space $X$, this group might not be trivial (for example, for the motivic sphere $\Sigma \P^1$), 
but it is trivial whenever $X$ is a smooth scheme over a perfect field. It would be interesting to compare and unify their approach to twisted K-theory with ours.

\subsection*{Summary} In~\secref{sec:recall} we recall the formation of twisted algebraic $K$-theory, as well as the necessary prerequisites on Brauer groups and Azumaya algebras. In~\secref{sec:motspc}, we study the motivic $K$-theory space and its representability via the twisted Grassmannian.
This leads us to~\secref{sec:motsptw}, where the motivic twisted K-theory spectrum is introduced, and the cdh descent and Milnor excision for twisted homotopy K-theory are proved. Finally, in~\secref{sec:motcohtw} we compute the slices of twisted K-theory and obtain the twisted motivic spectral sequence.

 \subsection*{Acknowledgments} This project has originally started together with Ben Antieau, and we have benefitted a lot from his help. We also thank sincerely Tom Bachmann, Marc Hoyois, Daniel Huybrechts, Marc Levine, and David Rydh for their useful comments and suggestions. Yakerson was supported by the Hermann-Weyl-Instructorship and is grateful to the Institute of Mathematical Research (FIM) and to ETH Z\"urich for providing perfect working conditions.  This project has received funding from the European Research Council (ERC) under the European Union's Horizon 2020 research and innovation programme (grant agreement No. 101001474).

 \subsection*{Notation}
  $S$ denotes the quasi-compact quasi-separated base scheme,  $\Sch_S$ denotes the category of $S$-schemes, $\Sm_S$ that of smooth $S$-schemes. We denote by $\Pre(\Sm_S)$ the $\infty$-category of presheaves of spaces on $\Sm_S$, $\H(S)$ the $\infty$-category of motivic spaces over $S$, and $\SH(S)$ that of motivic spectra, see \cite[\sectsign 2.2 and \sectsign 4.1]{norms}.
The Tate sphere $\T \in \H(S)$
is defined as $\A^1/(\A^1-0)\simeq\Sigma\Gm\simeq (\P^1,\infty)$.

We denote by $L_\Zar$ and $L_\Nis$ the Zariski and Nisnevich  sheafifications on $\Pre(\Sm_S)$ respectively. The $\A^1$-localization $\Lhtp\colon \Pre(\Sm_S)\to\Pre(\Sm_S)$ (resp. motivic localization $L_\mot\colon \Pre(\Sm_S)\to\Pre(\Sm_S)$) is the localization functor onto the full subcategory of $\A^1$-invariant presheaves of spaces (resp.\ of $\A^1$-invariant Nisnevich sheaves). A morphism $f$ in $\Pre(\Sm_S)$ is called an $\A^1$-equivalence (resp.\ a motivic equivalence) if $\Lhtp(f)$ (resp.\ $L_\mot(f)$) is an equivalence.
 
 \section{Recollection on twisted K-theory}\label{sec:recall}

In this section we recall the main definitions related to twisted algebraic K-theory.

Let $R$ be a commutative ring. Then an associative algebra $A$ over $R$ has a structure of left module over the algebra $A\otimes_R A^\op$ given by $(x\otimes y)\cdot z=xzy$. In particular we have an associative algebra homomorphism $A\otimes_R A^\op\to \End_R(A)$.
\begin{prop}
    Let $R$ be a commutative ring and $A$ an associative algebra over $R$. Then the following are equivalent:
    \begin{enumerate}
            \item $A$ is a finitely generated projective module of positive rank and the map $A\otimes_R A^\op\to \End_R(A)$ is an isomorphism;
            \item $A$ is a finitely generated projective module of positive rank, it is projective as an $A\otimes A^\op$-module and its center is equal to $R$;
            \item $A$ is a finitely generated projective $R$-module and for any map $R\to \Oo$ where $\Oo$ is a strict henselian ring, there exists a $d\ge1$ and an isomorphism of $\Oo$-algebras $A\otimes_R\Oo\simeq \M_d(\Oo)$;
            \item $A$ is a finitely generated projective $R$-module and for every map $R\to k$ where $k$ is a separably closed field, there exists an integer $d\ge 1$ and an isomorphism of $k$-algebras $A\otimes_R k\simeq \M_d(k)$;
            \item $A$ is a finitely generated projective $R$-module and for every map $R\to k$ where $k$ is a field, the algebra $A\otimes_R k$ is a central simple algebra.
    \end{enumerate}
    We say that an algebra $A$ is an Azumaya algebra if it satisfies any of the above conditions.
\end{prop}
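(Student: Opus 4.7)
The plan is to establish the cycle $(1)\Leftrightarrow(2)$ together with $(1)\Rightarrow(5)\Rightarrow(4)\Rightarrow(3)\Rightarrow(1)$. The implication $(1)\Rightarrow(5)$ is immediate, since both finite projectivity and the isomorphism $A\otimes_R A^\op\simeq\End_R(A)$ are preserved by base change along any map $R\to k$ to a field, and the resulting identity together with $A\otimes_R k\ne 0$ is precisely the classical characterization of central simple $k$-algebras. The implication $(5)\Rightarrow(4)$ is then Artin--Wedderburn: a central simple $k$-algebra has the form $\M_n(D)$ for a central division algebra $D$ over $k$, and $D=k$ whenever $k$ is separably closed.

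For the equivalence $(1)\Leftrightarrow(2)$, I would use Morita theory. Under $(1)$, the faithful finitely generated projective module $A$ is a progenerator for $R$, so $A\otimes_R A^\op\simeq\End_R(A)$ is Morita equivalent to $R$; under this equivalence $A$ corresponds to $R$, which gives simultaneously the projectivity of $A$ over $A\otimes_R A^\op$ and the identification of the center $\End_{A\otimes_R A^\op}(A)$ with $R$. Conversely, $(2)\Rightarrow(1)$ reduces at residue fields to the separable algebra criterion: a finite-dimensional $k$-algebra that is projective over itself and has trivial center is central simple, after which $(1)$ is the classical Wedderburn statement over fields.

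For $(4)\Rightarrow(3)$, let $(\Oo,\mathfrak m)$ be strict henselian with separably closed residue field $k$, so that $A\otimes_R k\simeq\M_d(k)$. Since $\Oo$ is henselian and $A\otimes_R\Oo$ is finite over $\Oo$, the matrix idempotent $e_{11}\in\M_d(k)$ lifts to an idempotent $\tilde e\in A\otimes_R\Oo$. The left ideal $(A\otimes_R\Oo)\tilde e$ is an $\Oo$-summand of $A\otimes_R\Oo$ whose reduction $\M_d(k)\cdot e_{11}$ is free of rank $d$, hence $(A\otimes_R\Oo)\tilde e$ is itself free of rank $d$ over the local ring $\Oo$. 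The left action then furnishes an $\Oo$-algebra map $A\otimes_R\Oo\to\End_\Oo((A\otimes_R\Oo)\tilde e)\simeq\M_d(\Oo)$ whose reduction modulo $\mathfrak m$ is the classical identification $\M_d(k)\simeq\End_k(\M_d(k)\cdot e_{11})$; being a map between free $\Oo$-modules of rank $d^2$ that is an isomorphism modulo $\mathfrak m$, it is itself an isomorphism by Nakayama.

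Finally, $(3)\Rightarrow(1)$ follows from faithfully flat descent: the map $A\otimes_R A^\op\to\End_R(A)$ of finitely generated projective $R$-modules is an isomorphism if and only if it is so after base change to the strict henselization at every prime of $R$, where by $(3)$ it becomes the standard isomorphism $\M_d(\Oo)\otimes_\Oo\M_d(\Oo)^\op\simeq\End_\Oo(\M_d(\Oo))$; the positive rank condition is automatic from $d\geq 1$. The main technical obstacle is the idempotent lifting step in $(4)\Rightarrow(3)$: genuinely upgrading the Wedderburn decomposition at the residue field to a matrix structure over $\Oo$ is what forces the henselian hypothesis into the characterization, and it is only after a single primitive idempotent has been lifted that the remaining identifications reduce to rank counts and Nakayama's lemma.
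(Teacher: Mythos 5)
Your proof is correct, and it is more self-contained than the paper's: the paper dispatches the proposition by citation, referring $(1)\Leftrightarrow(2)$ to deMeyer--Ingraham and the equivalence of $(2)$ with $(3)$, $(4)$, $(5)$ to Milne together with the \'etale-locality of $(2)$ and Artin--Wedderburn. Your cycle $(1)\Leftrightarrow(2)$ and $(1)\Rightarrow(5)\Rightarrow(4)\Rightarrow(3)\Rightarrow(1)$ reassembles essentially the same underlying ingredients --- Morita theory and the double centralizer theorem for $(1)\Leftrightarrow(2)$, triviality of the Brauer group of a separably closed field for $(5)\Rightarrow(4)$, idempotent lifting over a henselian local ring plus Nakayama for $(4)\Rightarrow(3)$, and reduction along the faithfully flat strict henselization at each prime for $(3)\Rightarrow(1)$ --- into a complete written-out argument, which is the real value added. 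Two small points to tighten: in the reduction of $(2)\Rightarrow(1)$ to residue fields, you tacitly use that separability of $A$ (projectivity over $A\otimes_R A^\op$) makes the formation of the center $Z(A)=\End_{A\otimes_R A^\op}(A)$ commute with base change; this is true precisely because of the separability idempotent but should be said, since the center of a general algebra does not base change. And in the same sentence ``projective over itself'' should read ``projective over $A\otimes_k A^\op$''.
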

\begin{proof}
    The equivalence of (1) and (2) is \cite[Theorem~3.4]{deMeyer-Ingraham}. The equivalence of (2) with (3), (4) and (5) is \cite[Proposition~IV.1.2]{Milne:1980} together with the fact that property (2) is étale-local, and the Artin-Wedderburn theorem.
\end{proof}

\begin{defn}
    Let $X$ be a scheme. An \emph{Azumaya algebra} over $X$ is a sheaf of quasicoherent $\Oo_X$-algebras $\Aa$ such that for every open affine $\Spec R\subseteq X$ the $R$-algebra $\Aa(R)$ is Azumaya. Equivalently, $\Aa$ is Zariski-locally an étale twisted form of $\M_d(\Oo_X)$. The integer $d$ is called the \emph{degree} of the Azumaya algebra (and it is constant on each connected component of $X$).
    
    We denote by $\Az(X)$ (resp. $\Az_d(X)$) the groupoid of Azumaya algebras over $X$ (resp. of degree $d$) with $\Oo_X$-algebra isomorphisms and by $\Az$ the corresponding stack.
\end{defn}

    Recall that the group scheme $\PGL_d$ is defined as the quotient of $\GL_d$ by its center $\Gm$. By Skolem-Noether theorem \cite[Proposition~IV.1.4]{Milne:1980}, the action by conjugation $\GL_d\rightarrow\Aut_{\Alg}(\M_d(\Oo))$ factors through $\PGL_d$ and induces an isomorphism
    $\PGL_d\simeq\Aut_{\Alg}(\M_d(\Oo))$
    of group schemes. This fact implies the following presentation of the groupoid of Azumaya algebras on $X$, aka \'etale twisted forms of $\M_d(\Oo_X)$.

\begin{prop}
    The natural map $$\Map(X, \B_\et\PGL_d)\rightarrow\Az_d(X)$$ is an
    equivalence of groupoids, where the left side denotes the mapping space in
    \'etale sheaves of spaces.
\end{prop}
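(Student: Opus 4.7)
The plan is to identify both sides with the same non-abelian \'etale cohomology set and upgrade the identification to groupoids via descent. The key input is already recalled in the excerpt: Skolem-Noether gives $\PGL_d \simeq \Aut_{\Alg}(\M_d(\Oo))$ as sheaves of groups on the big \'etale site, so $\PGL_d$ is precisely the automorphism sheaf of the $\Oo$-algebra $\M_d(\Oo)$.

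First, I would construct a map in both directions, functorially in $X$. Given a $\PGL_d$-torsor $P \in \Map(X, \B_\et \PGL_d)$, form the \'etale-locally trivial bundle of algebras $P \times^{\PGL_d} \M_d(\Oo_X)$, where $\PGL_d$ acts on $\M_d(\Oo_X)$ by conjugation; this is an \'etale twisted form of $\M_d(\Oo_X)$, hence an Azumaya algebra of degree $d$ by the definition already given. Conversely, given $\Aa \in \Az_d(X)$, the \'etale sheaf $\underline{\Isom}_{\Oo\text{-}\Alg}(\M_d(\Oo_X), \Aa)$ is a right $\PGL_d$-torsor (non-empty \'etale-locally by the definition of Azumaya algebra of degree $d$, and a torsor under $\Aut_{\Alg}(\M_d(\Oo_X))=\PGL_d$ by Skolem-Noether).

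Next, I would show that these constructions are mutually inverse and assemble to an equivalence of groupoids. In one direction, the evaluation map
\[
\underline{\Isom}_{\Oo\text{-}\Alg}(\M_d(\Oo_X), \Aa) \times^{\PGL_d} \M_d(\Oo_X) \isoto \Aa
\]
is an isomorphism of $\Oo_X$-algebras because it is so \'etale-locally (where both sides become canonically $\M_d(\Oo_X)$). In the other direction, for a torsor $P$, the natural map $P \to \underline{\Isom}_{\Oo\text{-}\Alg}(\M_d(\Oo_X), P \times^{\PGL_d} \M_d(\Oo_X))$ is likewise an \'etale-local isomorphism, hence a global one. Both constructions are visibly functorial in $\Oo_X$-algebra isomorphisms and $\PGL_d$-equivariant maps, so they extend to functors of groupoids.

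The only mildly subtle point, which I would handle by an \'etale descent argument, is the passage from the \'etale-local identification to the statement that $\Map(X, \B_\et \PGL_d)$ on the left is really computed in \'etale sheaves of \emph{spaces} rather than sets: the groupoid $\Az_d(X)$ has $\Oo_X$-algebra isomorphisms as morphisms, and the fibre of the comparison map over a fixed $\Aa$ must be shown to be the classifying groupoid $B\Aut_{\Oo_X\text{-}\Alg}(\Aa)$. This follows from the same \'etale-local computation, since $\Aut_{\Oo_X\text{-}\Alg}(\Aa)$ is the inner form of $\PGL_d$ determined by $\Aa$, i.e.\ the automorphism group scheme of the corresponding torsor. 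I expect this bookkeeping of automorphisms to be the only place one has to be careful; the core content is just Skolem-Noether plus \'etale descent.
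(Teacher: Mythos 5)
The paper actually states this proposition without proof — the preceding paragraph invokes Skolem–Noether to identify $\PGL_d \simeq \Aut_{\Alg}(\M_d(\Oo))$ as group schemes and then says ``this fact implies the following presentation,'' leaving the torsor–twisted-form dictionary as a standard consequence. Your write-up is precisely the standard unpacking of that implication: the contracted product $P \mapsto P\times^{\PGL_d}\M_d(\Oo_X)$ in one direction, the Isom-sheaf $\Aa \mapsto \underline{\Isom}_{\Oo\textrm{-}\Alg}(\M_d(\Oo_X),\Aa)$ in the other, with mutual inversehood checked étale-locally where both sides trivialize. So you are correct and on the same track; the only minor wrinkle is the last paragraph, where you phrase the groupoid-level upgrade as needing a separate argument that the fiber over $\Aa$ is $B\Aut_{\Oo_X\textrm{-}\Alg}(\Aa)$. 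In fact, once you have constructed mutually inverse \emph{functors} of groupoids (with natural isomorphisms, as you verify), the equivalence of groupoids — and hence the matching of automorphism groups — is already automatic; the remark about inner forms is a sanity check rather than an additional ingredient. Equivalently, one can shortcut the whole argument by observing that both sides are étale stacks and checking the map on stalks, but your explicit construction is just as clean.
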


In particular, we get that $$\H^1_\et(X,\PGL_d)\simeq\pi_0\Az_d(X)$$ is the set of
    isomorphism classes of Azumaya algebras and
    $$\PGL_d(X)=\H^0_\et(X,\PGL_d)\simeq\pi_1(\Az_d(X),\M_d(\Oo_X))$$ is the group of
    automorphisms of the trivial Azumaya algebra $\M_d(\Oo_X)$.

\begin{defn}
    Two Azumaya algebras $\Aa$ and $\Bb$ on $X$ are called \emph{Brauer equivalent} if
    there are vector bundles $\Ee$ and $\Ff$ on $X$ such that
    $\Aa\otimes_{\Oo}\End(\Ee)\simeq \Bb\otimes_{\Oo}\End(\Ff)$
    as $\Oo_X$-algebras. The \emph{Brauer group} $\Br(X)$ is the quotient of the set of isomorphism classes of Azumaya algebras on $X$ by Brauer equivalence. We write $[\Aa]$ for the class of $\Aa$ in $\Br(X)$.
\end{defn}

\begin{defn}
An \emph{index} of an Azumaya algebra $\Aa$ over $X$, denoted $\ind_X(\Aa)$, is the gcd of degrees of Azumaya algebras over $X$ that are Brauer equivalent to $\Aa$ (for a non-connected scheme, the index is defined componentwise).
\end{defn}

    The central extension 
    $$1\rightarrow\Gm\rightarrow\GL_d\rightarrow\PGL_d\rightarrow 1$$ induces
    a sequence
    $$\H^1_\et(X,\GL_d)\rightarrow\H^1_\et(X,\PGL_d)\xrightarrow{\delta}\H^2_\et(X,\Gm).$$
    This sequence is exact in the sense that the fiber of the map
    $\delta:\H^1_\et(X,\PGL_d)\rightarrow\H^2_\et(X,\Gm)$ over
    $0\in\H^2_\et(X,\Gm)$ is the image of $\H^1_\et(X,\GL_d)$. 
Moreover, by~\cite[Proposition~44]{Serre-Cohomologie-Galoisienne} there is an injective map
    \[\Br(X)\hookrightarrow\H^2_\et(X,\Gm).\]
    
    An Azumaya algebra of degree $d$ is $d$-torsion in the Brauer group. Hence if $X$ is quasicompact, $\Br(X)$ is a torsion subgroup of
    $\H^2_\et(X,\Gm)$, and thus have an inclusions:
    \[
    \Br(X) \hookrightarrow \Br'(X):=\H^2_\et(X,\Gm)_{\mrm{tors}} \hookrightarrow \H^2_\et(X,\Gm).
    \]

 Next, we define twisted versions of vector bundles, perfect complexes etc. By abuse of notation, for $f \colon Y \to X$ and $\Aa \in \Az(X)$ we denote $f^*(\Aa) \in \Az(Y)$ also by $\Aa$, when $f$ is clear from the context. 
 
\begin{defn}\label{def:twisted}
Let $\Aa$ be an Azumaya algebra over a scheme $X$. We define the category of \emph{$\Aa$-twisted sheaves on $X$} as 
\[\QCoh^\Aa(X)=\Mod_\Aa(\QCoh(X))\]
(by convention, $\Mod_\Aa$ stands for left modules). 

By Grothendieck's descent theorem, the map
\[f \colon Y \to X \, \mapsto \, \QCoh^\Aa(Y)  \]
is an  \'etale sheaf of categories on $\Sch_S$, which gives the stack of categories $\QCoh^\Aa$.
\end{defn} 

\begin{rem}\label{rem:lieblich} Let $\alpha \in \H^2_\et(X,\Gm)$ be a class, then we can associate to it a $\Gm$-gerbe $\sX \rightarrow X$. Lieblich has defined an $\alpha$-twisted sheaf as a quasicoherent sheaf on $\sX$ such that the inertial action (on the left) of $\Gm$ agrees with the $\sO_{\sX}$-module structure. The comparison between Definition~\ref{def:twisted} and his definition can be parsed together from the comparison results in Lieblich's thesis \cite[Section 2.1.3]{lieblich-thesis} and in C\u{a}ld\u{a}raru's \cite[Theorem 1.3.7]{caldararu-thesis}.
\end{rem}

Let $f \colon Y \to X$ be an \'etale cover of $X$ such that $f^* \Aa \simeq M_d(\Oo_Y)$. Then there is an equivalence 
\begin{equation}\label{eq:trivializing QCoh}
\QCoh^\Aa (Y) \simeq \QCoh(Y),
\end{equation}
given by sending an $M_d(\Oo_Y)$-module $M$ to $(\Oo_Y^{\oplus d})^\vee \otimes_{M_d(\Oo_Y)} M$.

\begin{defn}
The groupoid of \emph{$\Aa$-twisted vector bundles on $X$}, denoted $\Vect^\Aa(X)$, is given by the $\Aa$-twisted sheaves on $X$ that are  \'etale-locally free, and their isomorphisms. Precisely, $V \in \QCoh^\Aa(X)$ is an $\Aa$-twisted vector bundle if there is an \'etale cover $f \colon Y \to X$ such that $f^* \Aa \simeq M_d(\Oo_Y)$ and $f^* V \in \QCoh(Y)$ is a vector bundle on $Y$, under the identification~\eqref{eq:trivializing QCoh}.

Similarly, the category of \emph{$\Aa$-twisted perfect complexes on $X$}, denoted $\Perf^\Aa(X)$, is given by complexes of $\Aa$-twisted sheaves on $X$ that are \'etale-locally perfect. These data form the corresponding stacks $\Vect^\Aa$ and $\Perf^\Aa$.
\end{defn} 

\begin{rem}
$\QCoh^\Aa$ can be interpreted as an \'etale twisted form of $\QCoh$. Indeed, since there is an isomorphism of $\QCoh$-modules:  \[\Aut_{\QCoh}(\QCoh) \simeq \Pic,\] the \'etale twisted forms of $\QCoh$ are in bijection with $H^1(S, \Pic) \simeq H^2(S, \Gm)$, and $\QCoh^\Aa$ corresponds to $[\Aa] \in H^2(S, \Gm)$. Similarly, $\Vect^\Aa$ and $\Perf^\Aa$ are \'etale twisted forms of $\Vect$ and $\Perf$ respectively. 
\end{rem}

\begin{rem}
If $\Aa$ and $\Bb$ are Brauer equivalent Azumaya algebras, they are Morita equivalent, i.e. $\QCoh^\Aa \simeq \QCoh^\Bb$. Hence the stacks $\QCoh^\Aa$, $\Vect^\Aa$, $\Perf^\Aa$ only depend on the Brauer equivalence class of $\Aa$, as well as the $\Aa$-twisted K-theory and the related constructions. We twist with respect to a representative in the Brauer equivalence class only for the sake of convenience. 
\end{rem}

The following lemma will be important for studying $\Aa$-twisted K-theory.

\begin{lem}\label{lem:affine twisted vb}
    Let $V$ be an $\Aa$-twisted sheaf. Then the following are equivalent:
    \begin{enumerate}
        \item $V$ is an $\Aa$-twisted vector bundle;
        \item For every affine subscheme $i\colon U\hookrightarrow X$ the $\Aa(U)$-module $V(U)$ is finitely generated and projective;
        \item $V$ is locally free of finite rank as an $\Oo_X$-module;
    \end{enumerate}
\end{lem}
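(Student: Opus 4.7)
The plan is to pivot on condition (3), establishing $(1)\Leftrightarrow(3)$ via étale descent and $(2)\Leftrightarrow(3)$ by reducing to the Morita-theoretic statement that a module over a matrix algebra is finitely generated projective over the matrix algebra if and only if it is so over the base ring.

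For $(1)\Leftrightarrow(3)$, I would first observe that both properties are étale-local on $X$, so it suffices to check them after pulling back along an étale cover $f\colon Y\to X$ that trivializes $\Aa$ as $M_d(\Oo_Y)$. Under the Morita equivalence~\eqref{eq:trivializing QCoh}, an $M_d(\Oo_Y)$-module $M$ corresponds to $N=(\Oo_Y^{\oplus d})^\vee\otimes_{M_d(\Oo_Y)}M$, and conversely $M\simeq \Oo_Y^{\oplus d}\otimes_{\Oo_Y}N\simeq N^{\oplus d}$ as $\Oo_Y$-modules. Hence $M$ is locally free of finite rank over $\Oo_Y$ if and only if $N$ is a vector bundle on $Y$; étale descent for vector bundles then transfers this equivalence back to $X$.

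For $(2)\Leftrightarrow(3)$, I would argue affine-locally: on each $U=\Spec R\subseteq X$, condition (3) translates to $V(U)$ being finitely generated projective over $R$, while (2) asks for the same over $\Aa(U)$. The key claim is that for any Azumaya $R$-algebra $A$ and left $A$-module $M$, $M$ is finitely generated projective over $A$ if and only if it is finitely generated projective over $R$. Since both conditions are étale-local on $R$, one reduces to $A=M_d(R)$; writing $M\simeq R^{\oplus d}\otimes_R N$ via Morita equivalence, both properties are equivalent to $N$ being finitely generated projective over $R$ (using that $M\simeq N^{\oplus d}$ as $R$-modules in the backward direction).

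The main technical point is the careful bookkeeping to push "finitely generated projective" through the Morita equivalence and then descend along the trivializing étale cover of $\Aa$; however, this amounts to standard faithfully flat descent for modules together with the symmetric monoidal nature of the Morita equivalence $\Oo_Y\textrm{-Mod}\simeq M_d(\Oo_Y)\textrm{-Mod}$, once the correspondence is set up correctly.
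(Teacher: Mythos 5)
Your argument is correct and, at bottom, the same as the paper's. For $(1)\Leftrightarrow(3)$ the paper's proof is precisely étale/fpqc-locality of ``locally free of finite rank'' (citing the Stacks Project), combined implicitly with the observation you make explicit: on a trivializing cover the Morita equivalence \eqref{eq:trivializing QCoh} identifies $f^*V$ with $N^{\oplus d}$ as an $\Oo_Y$-module, so $f^*V$ is locally free of finite rank iff $N$ is a vector bundle. For $(2)\Leftrightarrow(3)$ the paper delegates entirely to deMeyer--Ingraham, Proposition~II.2.3, which is exactly the statement you prove directly — that over an Azumaya $R$-algebra $A$, a module is finitely generated projective over $A$ iff it is finitely generated projective over $R$ — so your contribution is to unpack that citation via fpqc descent and Morita reduction to $A=M_d(R)$. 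The only step worth a little more care in your write-up is the assertion that ``finitely generated projective over $A$'' is fpqc-local on $R$ when $A$ is noncommutative: this does hold (finite presentation descends, and flatness over $A$ descends along faithfully flat $R\to R'$, and f.g.\ projective $=$ finitely presented $+$ flat), but it is not quite as automatic as in the commutative case and deserves a sentence.
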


\begin{proof}
The equivalence between (2) and (3) is \cite[Proposition~II.2.3]{deMeyer-Ingraham}. The equivalence of (1) with (3) follows because the property of being locally free of finite rank is fpqc-local by \cite[\href{https://stacks.math.columbia.edu/tag/05B2}{05B2}]{stacks}.
\end{proof}
 
 With these definitions at hand, we can define $\Aa$-twisted K-theory. 
 
 \begin{defn}
 Let $\Aa$ be an Azumaya algebra over a qcqs scheme $S$. The \emph{$\Aa$-twisted K-theory} is the presheaf of spaces on $\Sm_S$ defined by 
\[K^\Aa(U) = K(\Perf^\Aa(U)),\] where $\Perf^\Aa(U)$ is considered as a Waldhausen category. 

 Alternatively, $K^\Aa$ can be characterized as the Zariski sheaf of spaces that on affine S-schemes is given by $K^\Aa(U) = \Vect^\Aa(U)^\gp$, where $\gp$ stands for group completion (with respect to direct sum on $\Vect^\Aa(U)$). Such a characterization is possible because $\Vect^\Aa$ is a Zariski sheaf by Lemma~\ref{lem:affine twisted vb}.
 
 We define the \emph{$\Aa$-twisted K-theory groups} of a smooth $S$-scheme $U$ as
 \[K^\Aa_i(U) = \pi_i K^\Aa(U). \]
  \end{defn}
  
  \begin{rem} \label{rem: K-module}
  $K^\Aa$ is naturally a module over the presheaf of $\Einfty$-ring spaces $K$, since $\Perf^\Aa$ is a $\Perf$-module via tensor product.
  \end{rem}
  
 \begin{defn} We define the \emph{$\Aa$-twisted rank map} as the map of presheaves of spaces 
\[\rk^\Aa \colon \Vect^\Aa \to \Z\qquad P\mapsto \sqrt{\rk_\Oo\End_\Aa(P)}\]  
that sends a locally free $\Aa$-module $P$ to the square root of  the rank of $\End_\Aa(P)$ as $\Oo$-module. This is well-defined  because $\QCoh^\Aa$ is a $\QCoh$-module hence enriched in $\QCoh$, and when $\Aa\simeq M_n(\Oo)$ it coincides with the usual rank under the identification \eqref{eq:trivializing QCoh} . In particular the map $\rk^\Aa$ takes values in the sheaf $\Z$ of integers, because \'etale-locally it gives an integer. Note that $\rk^\Aa(\Aa) = d$ for $\Aa$ an Azumaya algebra of degree $d$.

The stack $ \Vect^\Aa_m$ of \emph{$\Aa$-twisted vector bundles of rank $m$} is defined as the fiber of $\rk^\Aa-m \colon \Vect^\Aa \to \Z$.

Since $\Z$ is a sheaf of groups, the monoid map $\rk^\Aa$ factors through $K^\Aa$, and we get 
\[\rk^\Aa \colon K^\Aa \to \Z, \]  
which in turn factors through $K_0^\Aa$. 
 \end{defn}
 
\begin{defn}\label{def:twz}The sheaf of \emph{$\Aa$-twisted integers} is the following subsheaf of $\Z$:
  \[\Z^\Aa = L_\Zar \Im (\rk^\Aa \colon K^\Aa \to \Z). \]
 \end{defn} 
  \begin{lem}\label{lem:L_zar K_0}
  The induced map of presheaves \[\rk^\Aa \colon K_0^\Aa \to \Z^\Aa \] is a Zariski-local equivalence. Moreover, for $R$ a local ring, the inclusion $ K_0^\Aa(R) \simeq \Z^\Aa(R) \subset \Z$ is given by \[\ind_R(\Aa) \cdot \Z \subset \Z .\]
  \end{lem}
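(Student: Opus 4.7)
The plan is to compute $K_0^\Aa(R)$ explicitly at a local ring $R$ and deduce both assertions. The first assertion follows from the second: $\Z^\Aa$ is by construction the Zariski sheafification of $\Im(\rk^\Aa\colon K^\Aa\to\Z)=\Im(K_0^\Aa\to\Z)$, so $K_0^\Aa\to\Z^\Aa$ is surjective as Zariski sheaves; since sheafification preserves values at local rings, it suffices to show that $\rk^\Aa\colon K_0^\Aa(R)\to\Z$ is injective with image $\ind_R(\Aa)\cdot\Z$ for every local ring $R$.

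Fix $R$ local with residue field $k$, set $d=\deg\Aa$, and write $\Aa\otimes_R k\simeq M_r(D)$ via Artin-Wedderburn, with $D$ a division algebra of degree $e=\ind_k(\Aa\otimes_R k)$ over $k$ and $d=er$. Every finitely generated projective module over $\Aa\otimes_R k$ is a direct sum of copies of the unique simple module $S=D^r$. The key structural step is the analogous statement over $R$: every $\Aa$-twisted vector bundle is isomorphic to $P_0^n$ for a unique indecomposable $P_0$ lifting $S$. This is carried out by lifting the primitive idempotents of $\Aa\otimes_R k$ along the surjection $\Aa\twoheadrightarrow\Aa\otimes_R k=\Aa/\mathfrak{m}\Aa$; the Azumaya hypothesis implies that the Jacobson radical of $\Aa$ equals $\mathfrak{m}\Aa$ (verified \'etale-locally where $\Aa\simeq M_d(\Oo)$) and provides the descent needed to produce the required lifts. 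Consequently $K_0^\Aa(R)\simeq\Z\cdot[P_0]$ and $\rk^\Aa$ is injective. Using the identity $\rk^\Aa(P)=\rk_R(P)/d$ (valid \'etale-locally where $\Aa$ trivialises, hence globally since $\rk^\Aa$ takes integer values), we obtain $\rk^\Aa(P_0)=\dim_k(S)/d=e^2r/d=e$, so the image of $\rk^\Aa$ is $e\cdot\Z$.

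It remains to identify $e$ with $\ind_R(\Aa)$. On one hand, $\End_\Aa(P_0)^{\op}$ is an Azumaya algebra over $R$ of degree $\rk^\Aa(P_0)=e$ (a statement checked \'etale-locally on the trivialisation of $\Aa$), Brauer equivalent to $\Aa$ by Morita theory, which gives $\ind_R(\Aa)\le e$. On the other hand, any Azumaya algebra $\Bb$ over $R$ Brauer equivalent to $\Aa$ has $\Bb\otimes_R k$ Brauer equivalent to $\Aa\otimes_R k$ of the same degree, so $\deg\Bb\ge\ind_k(\Aa\otimes_R k)=e$, whence $\ind_R(\Aa)\ge e$. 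The main technical obstacle is the lifting of primitive idempotents from $\Aa\otimes_R k$ to $\Aa$ over a general (not necessarily henselian) local ring; this relies crucially on the Azumaya hypothesis and \'etale descent.
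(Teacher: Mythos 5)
The overall plan is the same as the paper's: reduce to showing that for a local ring $R$, $\rk^\Aa\colon K_0^\Aa(R)\to\Z$ is an injective homomorphism with image $\ind_R(\Aa)\cdot\Z$, which rests on the structure theorem (every finitely generated projective $\Aa_R$-module is a direct sum of copies of a unique indecomposable one). The paper simply cites this as \cite[Corollary~1]{demeyer}, using it for both injectivity and for identifying the image with the index; it does not attempt to reprove it.

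Where you depart, there is a genuine gap. Your argument for the structure theorem is to lift primitive idempotents along $\Aa\twoheadrightarrow\Aa\otimes_Rk$, on the grounds that the Azumaya hypothesis forces $J(\Aa)=\mathfrak{m}\Aa$ and that ``étale descent'' then yields the lift. But $J(\Aa)=\mathfrak{m}\Aa$ is not sufficient for idempotent lifting over a general local ring: the standard lifting lemma requires the kernel to be nil, or $R$ to be $\mathfrak{m}$-adically complete or henselian. None of these hold in general; for instance $\mathfrak{m}\Aa$ is not nilpotent for $R=\Z_{(p)}$. The ``étale descent'' step is also unsubstantiated: after passing to an étale cover of $\Spec R$ that trivialises $\Aa$, the local ring has been replaced by a semilocal ring, and a lifted idempotent upstairs need not be equivariant for the descent datum, so it does not automatically give an idempotent over $R$. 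The fact that idempotents nevertheless lift (equivalently, that the Krull--Schmidt statement holds) is true, but it is exactly the non-trivial content of DeMeyer's theorem, whose actual proof uses the separability idempotent of $\Aa$ rather than a Jacobson radical computation. In short: what you present as a verification is a restatement of the result you need, and the mechanism you offer for proving it does not work as stated. The rest of your proof (the rank computation, the identification of $e$ with $\ind_R(\Aa)$ via $\End_\Aa(P_0)^{\op}$ and Brauer equivalence) is fine once the structure theorem is in place; you should simply cite it rather than attempt the lifting argument.
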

  
  \begin{proof}
By Morita invariance of $K_0^\Aa$ and~\cite[Corollary~1]{demeyer}, the map $\rk^\Aa \colon K_0^\Aa \to \Z$ is an injective homomorphism on local rings, hence it identifies $L_\Zar K_0^\Aa$ with $\Z^\Aa$. Over a local ring $R$, the image is given by $\ind_R(\Aa) \cdot \Z \subset \Z$ again by~\cite[Corollary~1]{demeyer}.
  \end{proof}

 \begin{cor}
 The sheaf inclusion $\Z^\Aa \subset \Z$ becomes an isomorphism after tensoring with $\Q$. 
 \end{cor}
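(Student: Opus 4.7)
The plan is to verify the isomorphism stalkwise on the Zariski site. Both $\Z^\Aa$ and $\Z$ are Zariski sheaves of abelian groups, so the inclusion $\Z^\Aa \subset \Z$ is an isomorphism (after tensoring with $\Q$) if and only if it induces an isomorphism on every stalk, i.e., on every local ring $R$ arising as a localization $\Oo_{X,x}$.

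By the preceding \lemref{lem:L_zar K_0}, for such a local ring $R$ we have
\[
\Z^\Aa(R) \;=\; \ind_R(\Aa) \cdot \Z \;\subset\; \Z.
\]
Since $\Aa$ is an Azumaya algebra, its degree over $R$ is a positive integer, and the index $\ind_R(\Aa)$ divides this degree, so $\ind_R(\Aa)$ is a strictly positive integer. Consequently $\ind_R(\Aa)$ is a unit in $\Q$, and the inclusion $\ind_R(\Aa)\cdot\Z \otimes_\Z \Q \hookrightarrow \Z \otimes_\Z \Q$ is an equality of one-dimensional $\Q$-vector spaces.

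Since tensoring with $\Q$ is exact and commutes with stalks (being a filtered colimit), the induced map $(\Z^\Aa)_x \otimes \Q \to \Z_x \otimes \Q$ is an isomorphism at every point $x$. This gives the desired isomorphism $\Z^\Aa \otimes \Q \isoto \Z \otimes \Q$ of Zariski sheaves.

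There is no real obstacle here; the statement is a direct consequence of \lemref{lem:L_zar K_0} together with the fact that the index of an Azumaya algebra is always a positive integer invertible in $\Q$.
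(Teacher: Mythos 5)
Your proof is correct and matches the (implicit) reasoning of the paper: the corollary is stated without proof immediately after Lemma~\ref{lem:L_zar K_0}, and your argument — checking on Zariski stalks, where the lemma identifies the inclusion with $\ind_R(\Aa)\cdot\Z \subset \Z$ and the index is a positive integer, hence invertible in $\Q$ — is exactly the intended deduction.
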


By Lemma~\ref{lem:L_zar K_0}, our sheaf $\Z^\Aa$ is isomorphic to the sheaf (with transfers) $\Z_\Aa$ of~\cite[Section~5.3]{levine-kahn}, and our subsheaf inclusion $\Z^\Aa \subset \Z$ is the reduced norm map $\mathrm{Nrd}$ of~\cite[Lemma~5.2.1]{levine-kahn}. 
The following properties of $\Z^\Aa$ are proved in~\cite{levine-kahn}.

\begin{lem}\label{lem: Z^A mot space}
Let $S$ be regular of finite type. Then $\Z^\Aa$ is an $\A^1$-invariant Nisnevich sheaf on $\Sm_S$. Moreover, $\Z^\Aa$ is a birational sheaf, i.e. locally-constant in the Zariski  topology.
\end{lem}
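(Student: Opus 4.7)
The plan is to leverage the explicit description of $\Z^\Aa$ provided by Lemma~\ref{lem:L_zar K_0}. That lemma identifies $\Z^\Aa$ with the Zariski sheafification of $K_0^\Aa$ and gives, on a local ring $R$ over $S$, the formula $\Z^\Aa(R) = \ind_R(\Aa) \cdot \Z \subset \Z$. Since a Zariski sheaf of abelian groups on $\Sm_S$ is completely determined by its values on local rings, all three assertions reduce to properties of the assignment $R \mapsto \ind_R(\Aa)$ on local rings of smooth $S$-schemes.

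First, I would make the identification of $\Z^\Aa$ with the sheaf $\Z_\Aa$ appearing in \cite[\S5.3]{levine-kahn} precise: both are defined as Zariski sheafifications and both assign to a local ring $R$ the subgroup $\ind_R(\Aa) \cdot \Z$ of $\Z$ via the reduced norm map $\mathrm{Nrd}$, as already noted right after Lemma~\ref{lem:L_zar K_0}. Once that identification is in place, the birational and $\A^1$-invariance properties follow from the corresponding statements in \cite{levine-kahn}, namely from the discussion of $\Z_\Aa$ surrounding \cite[Lemma~5.2.1]{levine-kahn}. The underlying geometric inputs are classical: Auslander--Goldman's injectivity $\Br(R) \hookrightarrow \Br(K)$ for a regular local ring $R$ with fraction field $K$, together with the equality $\ind_R(\Aa) = \ind_K(\Aa_K)$, gives the birational property; and $\A^1$-invariance of the Brauer group on regular schemes then gives $\A^1$-invariance of the index, hence of $\Z^\Aa$.

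The Nisnevich sheaf property then comes for free: a locally constant Zariski sheaf on $\Sm_S$ is automatically a Nisnevich sheaf, since for any elementary Nisnevich square, the induced square of generic points of each connected component is a coproduct decomposition, and a sheaf that is locally constant in the Zariski topology satisfies descent for such squares tautologically. Alternatively, one can invoke the fact that Zariski sheaves of abelian groups whose stalks depend only on the Henselization satisfy Nisnevich descent.

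The main point of subtlety will be verifying that the identification with $\Z_\Aa$ is genuine on all of $\Sm_S$ and not only for $\Aa$ pulled back from the base: the construction in \cite{levine-kahn} is phrased in terms of a fixed Azumaya algebra over the base, whereas here $\Aa$ is an Azumaya algebra on a general $X \in \Sm_S$. However, because the defining data (the index $\ind_R(\Aa|_R)$ at each local ring) is intrinsic to the pullback of $\Aa$ to $\Spec R$, the argument in loc.\ cit.\ localizes without change, and one concludes.
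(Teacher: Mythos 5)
Your proposal is correct and takes essentially the same route as the paper: the paper's proof of this lemma is a one-line citation to Kahn--Levine's Lemma~5.1.1, after having already recorded (right before the lemma, via Lemma~\ref{lem:L_zar K_0}) the identification of $\Z^\Aa$ with their sheaf $\Z_\Aa$ and of the inclusion $\Z^\Aa\subset\Z$ with the reduced norm. You essentially reproduce that reduction, with additional (accurate) commentary on the underlying inputs; the only minor wobble is the ``tautological'' justification for Nisnevich descent of a birational Zariski sheaf, which is true but deserves a more careful argument than the one sketched (each connected component of the \'etale piece $V$ of an elementary Nisnevich square dominates the corresponding component of $X$, so a birational sheaf cannot distinguish $V$ from $W=U\times_X V$, nor $X$ from $U$) --- though since you are in any case deferring to Kahn--Levine for the full proof, this does not create a gap.
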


\begin{proof}
This is~\cite[Lemma~5.1.1]{levine-kahn}.
\end{proof}
 
 \begin{lem}\label{lem: Z^A transfers}
Let $S$ be regular of finite type. Then the sheaf  $\Z^\Aa$ has a canonical structure of Voevodsky's transfers, and $\Z^\Aa \subset \Z$ is an inclusion of sheaves with transfers on $\Sm_S$. 
\end{lem}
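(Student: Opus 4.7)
The plan is to transport the canonical sheaf-with-transfers structure on the constant Nisnevich sheaf $\Z$ along the subsheaf inclusion $\Z^\Aa\hookrightarrow \Z$ provided by Lemma~\ref{lem:L_zar K_0}. Recall that, viewed as a sheaf with transfers, an elementary finite correspondence $[W]\in\Cor_S(Y,X)$ with $W\subset Y\times_S X$ finite surjective of degree $d$ over a connected component of $Y$ acts on $\Z$ as multiplication by $d$ on that component. What needs to be shown is that, with this structure, the subgroups cut out by $\Z^\Aa$ are preserved.

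Since $\Z^\Aa$ is birational by Lemma~\ref{lem: Z^A mot space}, a sub-presheaf-with-transfers structure on it is determined by, and may be verified on, stalks at generic points. By Lemma~\ref{lem:L_zar K_0}, these stalks are the subgroups $\ind_K(\Aa)\cdot\Z\subset \Z$ for $K$ a function field of a smooth connected $S$-scheme. So it suffices to check that for every elementary finite correspondence between function fields, the transfer of $\Z$ maps the appropriate index-multiple subgroup into the target one.

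Such an elementary correspondence in $\Cor_S(\Spec K,\Spec L)$ is given by a closed point $z\in\Spec(K\otimes_S L)$ with residue field $\kappa(z)$ finite over $K$ of some degree $d=[\kappa(z):K]$; it acts on $\Z$ as the pullback $\Z(L)\to \Z(\kappa(z))$ (the identity on values) composed with the transfer $\Z(\kappa(z))\to \Z(K)$ (multiplication by $d$). The pullback step restricts to the subgroups because $\ind_{\kappa(z)}(\Aa_{\kappa(z)})$ divides $\ind_L(\Aa_L)$, the Brauer index being nonincreasing under base change. The transfer step requires the classical divisibility
\[
\ind_K(\Aa)\,\bigm|\,[\kappa(z):K]\cdot\ind_{\kappa(z)}(\Aa_{\kappa(z)}),
\]
which follows by picking a splitting field $M/\kappa(z)$ for $\Aa_{\kappa(z)}$ of degree $\ind_{\kappa(z)}(\Aa_{\kappa(z)})$: then $M/K$ has degree $[\kappa(z):K]\cdot\ind_{\kappa(z)}(\Aa_{\kappa(z)})$ and splits $\Aa_K$, so $\ind_K(\Aa)$ divides $[M:K]$.

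The step I expect to be the main obstacle is to upgrade the generic-point verification to a bona fide presheaf-with-transfers structure on all of $\Sm_S$. This is exactly where birationality of $\Z^\Aa$ is essential and why the full strength of Lemma~\ref{lem: Z^A mot space} is used as a black box: since $\Z$ already carries the full transfer structure and $\Z^\Aa$ is locally constant in the Zariski topology, once the subgroup is preserved at generic points the whole structure descends. Alternatively, one may directly identify $\Z^\Aa$ with the image of the reduced norm map $\Nrd$ of~\cite[Lemma~5.2.1]{levine-kahn} and invoke the transfer structure constructed there.
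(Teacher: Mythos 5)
Your plan is correct, and it supplies a genuine argument for a statement the paper proves only by citing \cite[Section~5.3]{levine-kahn}. The two routes differ in spirit: the paper identifies the inclusion $\Z^\Aa\subset\Z$ with the reduced norm $\Nrd$ out of $L_\Zar K_0^\Aa$ and imports the transfer structure from there, whereas you start with the degree transfers on the constant sheaf $\Z$ and verify directly that the subsheaf is stable, isolating a single arithmetic input: the divisibility $\ind_K(\Aa)\mid[F:K]\cdot\ind_F(\Aa_F)$ for a finite field extension $F/K$. This is a clean and more elementary argument. Two small points are worth recording in a final writeup. First, when you reduce to generic points, an elementary correspondence $W\subset Y\times_S X$ restricted over $\eta_Y$ need not land over $\eta_X$; what saves you is that the transfer factors as pullback along $W\to X$ (which automatically preserves any subpresheaf of abelian groups) followed by the finite-flat transfer $W\to Y$, so your divisibility is applied with $K=\kappa(\eta_Y)$ and $F=\kappa(\eta_W)$, exactly as you intend, and birationality of $\Z^\Aa$ (Lemma~\ref{lem: Z^A mot space}) identifies $\Z^\Aa(Y)$ with the stalk at $\eta_Y$ via Lemma~\ref{lem:L_zar K_0}. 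Second, the classical fact that $\ind_K(\Aa)$ divides the degree of any finite splitting field is often stated only for separable extensions, but the dimension-count argument — a simple $D\otimes_K M$-module, viewed over the division algebra $D$ by restriction, is $D$-free, forcing $\ind_K(\Aa)\mid[M:K]$ — covers arbitrary finite $M/K$, so your application to a maximal subfield $M$ of $\Aa_F$ is valid without any separability hypothesis.
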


\begin{proof}
This is~\cite[Section~5.3]{levine-kahn}.
\end{proof}

\section{The motivic space of twisted K-theory}\label{sec:motspc}

Let  $S$ be a quasi-compact quasi-separated base scheme, $\Aa$ an  Azumaya algebra of degree $d$ over $S$.
We want to understand the motivic homotopy type of the twisted K-theory space $K^\Aa$, which is a sheaf on $\Sm_S$, given on any affine $S$-scheme $\Spec R$ by $\Vect^\Aa(R)^\gp$. First, we show that it is indeed a motivic space when $S$ is regular. 

\begin{lem}\label{lem:localization-sequence}
    Let $X$ be a quasi-compact quasi-separated scheme and $Z\subseteq X$ a closed subset such that $U=X - Z$ is quasi-compact. Then there is a functorial fiber sequence of spaces 
    \[K(\Perf^\Aa(X\textrm{ on }Z))\to K^\Aa(X)\to K^\Aa(U)\,.\]
\end{lem}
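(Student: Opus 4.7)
The plan is to deduce this from the classical Thomason--Trobaugh localization for untwisted perfect complexes, transported through the $\Perf$-module structure on $\Perf^\Aa$.

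First, I would observe that, essentially by definition, $\Perf^\Aa(X\text{ on }Z)$ is the fiber of the restriction functor $\Perf^\Aa(X)\to \Perf^\Aa(U)$ in the $\infty$-category of small stable $\infty$-categories, since it consists of $\Aa$-twisted perfect complexes on $X$ whose restriction to $U$ is acyclic.

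The main technical step is to show that
\[\Perf^\Aa(X\text{ on }Z)\to \Perf^\Aa(X)\to \Perf^\Aa(U)\]
is in fact a Verdier (exact) sequence of small stable $\infty$-categories, i.e., that the rightmost functor exhibits $\Perf^\Aa(U)$ as the idempotent-complete Verdier quotient. I would argue this in two complementary ways. One approach uses that $\Perf^\Aa(Y)$ is canonically a $\Perf(Y)$-module, and étale-locally an invertible one, so the functor $-\otimes_{\Perf(X)}\Perf^\Aa(X)$ preserves Verdier sequences of $\Perf(X)$-modules; applying it to the classical Thomason--Trobaugh sequence $\Perf(X\text{ on }Z)\to \Perf(X)\to \Perf(U)$ produces the twisted version. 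The alternative is to note that the Verdier sequence property can be checked étale-locally on $X$, reducing to the case $\Aa\simeq \M_d(\Oo_X)$ where Morita invariance identifies the twisted categories with the untwisted ones and Thomason--Trobaugh applies directly.

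Finally, I would invoke the fundamental fact that algebraic $K$-theory sends Verdier sequences of small idempotent-complete stable $\infty$-categories to fiber sequences, yielding the stated fiber sequence. Functoriality in the pair $(X,Z)$ is inherited from that of each ingredient. The main obstacle is the Verdier sequence claim for the twisted categories; once it is established, the rest is formal. I expect the étale-local reduction to the untwisted case to give the cleanest argument, exploiting étale-local triviality of $\Aa$ to invoke Thomason--Trobaugh directly after Morita reduction.
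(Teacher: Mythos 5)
Your approach (a) -- tensoring the classical Thomason--Trobaugh exact sequence of $\Perf(X)$-modules by $\Perf^\Aa(X)$ -- is precisely the paper's proof; the citation the paper uses to justify that this operation preserves exact sequences is \cite[Theorem~4.8.5.16.(4)]{HA}, exploiting that $\Perf^\Aa(X)$ is a dualizable (indeed Azumaya) $\Perf(X)$-module. Your preliminary observation that $\Perf^\Aa(X\text{ on }Z)$ is the kernel of restriction is fine and implicit in the paper.

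One caveat on your stated preference for approach (b): while identifying the \emph{kernel} $\Perf^\Aa(X\text{ on }Z)$ is indeed an \'etale-local check, the assertion that $\Perf^\Aa(U)$ is the idempotent-completed Verdier \emph{quotient} is not obviously an \'etale-local property. The hard content of Thomason--Trobaugh is that perfect complexes on $U$ extend to $X$ up to summands (via the Thomason--Neeman compact generation argument for $D_{\mathrm{qc}}$), and in the twisted setting you would have to redo that argument for $D^\Aa_{\mathrm{qc}}$ or otherwise descend the quotient property, which takes some care because Verdier quotients of sheaves of categories need not be computed sectionwise. Approach (a) sidesteps all of this and is the cleaner route, so I would swap your expressed preference.
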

\begin{proof}
    The claim follows by tensoring the $\Perf(X)$-linear exact sequence
    \[\Perf(X\textrm{ on }Z)\to \Perf(X)\to \Perf(U)\]
    by $\Perf^\Aa(X)$ and using \cite[Theorem~4.8.5.16.(4)]{HA}.
\end{proof}

\begin{prop}\label{thm:Nis-descent and A1-invariance}
    The presheaf of spaces $K^\Aa$ satisfies Nisnevich descent. If moreover $S$ is regular and noetherian of
    finite Krull dimension, then $K^\Aa$ is
    $\A^1$-invariant.
\end{prop}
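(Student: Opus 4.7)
The proof splits into two independent statements, both leveraging \lemref{lem:localization-sequence}.

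For Nisnevich descent, the plan is to verify the Morel--Voevodsky criterion that $K^\Aa$ sends every elementary Nisnevich square to a Cartesian square. Given such a square with closed complement $Z = X-U$ and étale cover $p\colon V\to X$ restricting to an isomorphism over $U$, I would compare the localization fiber sequences of \lemref{lem:localization-sequence} applied to $(X,Z)$ and $(V,p^{-1}(Z))$. The Cartesian-ness of the $K^\Aa$-square reduces to showing that the induced map on fibers
\[ K(\Perf^\Aa(X\textrm{ on }Z))\to K(\Perf^\Aa(V\textrm{ on }p^{-1}(Z))) \]
is an equivalence. Already on the level of stable $\infty$-categories the pullback functor is an equivalence: this is a twisted form of Thomason--Trobaugh's Nisnevich excision for $\Perf$, which follows from the untwisted statement via the same tensoring-up argument used in the proof of \lemref{lem:localization-sequence}, namely by base-changing the equivalence $\Perf(X\textrm{ on }Z)\isoto\Perf(V\textrm{ on }p^{-1}(Z))$ along $\Perf(X)\to\Perf^\Aa(X)$.

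For $\A^1$-invariance, assume $S$ is regular noetherian of finite Krull dimension, so that for any $X\in\Sm_S$ both $X$ and $X\times\A^1$ are again regular noetherian of finite Krull dimension. The strategy is to pass to $G$-theory of $\Aa$-modules: since $\Aa$ is étale-locally Morita equivalent to $\Oo_X$, it has finite global dimension on $X$, and a Quillen-style resolution theorem identifies $K^\Aa(X)$ with the algebraic $K$-theory of the abelian category $\COH^\Aa(X)$ of coherent $\Aa$-modules (and similarly on $X\times\A^1$). It then suffices to show that pullback $K(\COH^\Aa(X))\to K(\COH^\Aa(X\times\A^1))$ is an equivalence. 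This is an adaptation of Quillen's classical homotopy invariance of $G$-theory for regular noetherian rings: his dévissage argument uses only the localization sequence in $G$-theory together with the fact that every coherent $\Oo_X[t]$-module admits a finite filtration whose subquotients are pulled back from $\COH(X)$, and both ingredients carry over verbatim to $\COH^\Aa$ since the filtration only depends on the underlying $\Oo$-module structure.

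The main obstacle is the $\A^1$-invariance step. Nisnevich descent is essentially formal once the localization fiber sequence of \lemref{lem:localization-sequence} is in hand, but the comparison between $\Perf^\Aa$ and $\COH^\Aa$, together with the extension of Quillen's dévissage, requires one to track the $\Aa$-module structure carefully at every step. The key input that keeps the argument parallel to the untwisted case is that $\Aa$ is Morita-equivalent étale-locally to $\Oo_X$, so that finite global dimension and flatness over the base are preserved, and all the categorical constructions behave functorially with respect to the Azumaya twist.
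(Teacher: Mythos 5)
Your Nisnevich descent argument tracks the paper's exactly: both reduce to Nisnevich excision for the untwisted $\Perf$ via the localization sequence of \lemref{lem:localization-sequence} and the tensoring argument, as in Thomason--Trobaugh \cite[Corollary~10.10]{TT}.

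For $\A^1$-invariance, however, you take a genuinely different route. The paper uses Quillen's computation $K(X\times_S \SB(\Aa))\simeq \prod_{i=0}^{d-1} K^{\Aa^{\otimes i}}(X)$ for the Severi--Brauer variety $\SB(\Aa)$: since $\SB(\Aa)$ is smooth over $S$, and $S$ is regular, the untwisted $\A^1$-invariance applies to $X\times_S\SB(\Aa)$, and naturality of the decomposition transports the equivalence to the summand $K^\Aa(X)$. This two-line reduction completely avoids re-proving any foundational $G$-theory result in the twisted context. Your approach instead runs the Quillen programme from scratch: pass to $G$-theory of coherent $\Aa$-modules via a resolution theorem, then adapt the dévissage/filtration argument. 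That is plausible, but note two points that require more care than your sketch suggests. First, the resolution theorem needs the twisted analogue of ``perfect $=$ pseudo-coherent on regular schemes''; this does hold because $\Aa$ has finite global dimension over a regular base (Auslander--Goldman), but it is a nontrivial input rather than automatic. Second, your claim that the relevant filtration ``only depends on the underlying $\Oo$-module structure'' is not quite the right justification: the filtration must consist of $\Aa[t]$-submodules, not merely $\Oo[t]$-submodules, for the subquotients to live in $\COH^\Aa$. What makes the argument go through is that the filtration in Quillen's proof is constructed from the grading/filtration on $\Oo_X[t]$, which is central in $\Aa[t]$, so the submodules it produces are automatically $\Aa$-linear --- but this needs to be said, not inherited ``verbatim''. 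Both routes work; the Severi--Brauer reduction buys you brevity and avoids extending Quillen's dévissage to the twisted category.
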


\begin{proof}
The claim about Nisnevich descent follows from Lemma~\ref{lem:localization-sequence} as in~\cite[Corollary~10.10]{TT} by the \'etale descent for $\Aa$-twisted perfect complexes.

Let us prove the $\A^1$-invariance. Let $d$ be the degree of $\Aa$ and $\SB(\Aa)$ be the Severi-Brauer variety associated to $\Aa$. By~\cite[Theorem~4.1]{Quillen:1973}, for every smooth $S$-scheme $X$ there is an isomorphism 
\[K(X\times_S \SB(\Aa))\simeq \prod_{i=0}^{d-1} K^{\Aa^{\otimes i}}(X)\,.\]
Since $\SB(\Aa)$ is a smooth $S$-scheme, the pullback map
\[p^*\colon K(X\times_S \SB(\Aa))\to  K(\A^1_X\times_S \SB(\Aa))\]
is an equivalence. Since the decomposition is natural, it follows that the pullback map induces an equivalence on each component, in particular on the first component, which is what we wanted to prove.
\end{proof}

Let $\Spec R$ be a smooth affine $S$-scheme. By Lemma~\ref{lem:affine twisted vb}, $\Vect^\Aa(R)$ is the groupoid of finitely generated projective $\Aa_R$-modules, i.e. summands of free $\Aa_R$-modules (by the usual abuse of notation, we write further on  $\Aa$ for $\Aa_R$). Hence, we have \[\Vect^\Aa(R)^\gp \simeq  \Vect^\Aa(R)[-\Aa].\]

By the McDuff-Segal group completion theorem~\cite{mcduff-segal} (specifically, we use \cite[Proposition 6]{Nikolaus}), we get an equivalence of $\Einfty$-spaces:
\[
\Vect^\Aa(R)^\gp \simeq \sVect^\Aa(R)^+,
\]
where $\sVect^\Aa(R) = \colim (\Vect^\Aa(R) \xrightarrow{+\Aa} \Vect^\Aa(R) \xrightarrow{+\Aa} \dots)$ is the groupoid of stable $\Aa$-twisted vector bundles on $\Spec R$, and $+$ stands for Quillen's +-construction.

Recall that +-construction, applied to $\sVect^\Aa(R)$, is invisible up to $\A^1$-homotopy by Robalo's criterion~\cite[Insight 1.1]{Robalo} (which originates from~\cite[Theorem 4.3]{Voevodsky:1998}).  More precisely, we have the following result.

\begin{prop} On affine schemes, there is  a canonical $\A^1$-equivalence of $\Einfty$-spaces:
\[
\sVect^\Aa \simeq \Vect^{\Aa,\gp}.
\]
\end{prop}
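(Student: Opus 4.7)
The plan is to apply Robalo's criterion, which generalizes Voevodsky's original result on $B\GL_\infty$ to a broad class of presheaves of commutative $\Einfty$-monoids on smooth schemes. The criterion asserts that for such a presheaf $F$, Quillen's plus-construction applied objectwise becomes invisible after $\A^1$-localization. Combining this with the McDuff--Segal identification $\Vect^{\Aa,\gp}(R) \simeq \sVect^\Aa(R)^+$ established just above yields the desired $\A^1$-equivalence $\sVect^\Aa \simeq \Vect^{\Aa,\gp}$.

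To apply the criterion to $F = \sVect^\Aa$, I would verify two things. First, $\sVect^\Aa$ is a Nisnevich sheaf of commutative $\Einfty$-monoids under direct sum, with the $\Einfty$-structure inherited from the symmetric monoidal structure on $\Vect^\Aa$; the filtered colimit defining $\sVect^\Aa$ preserves both the sheaf condition and the $\Einfty$-structure. Second, I would verify that the commutator subgroup of the stable twisted automorphism group $\Aut_\Aa(\Aa^{\oplus\infty})$ is $\A^1$-contractible after Nisnevich sheafification. This is the twisted analogue of the key input in Voevodsky's original argument, and is what allows the plus-construction (which affects only the commutator of $\pi_1$ and its effect on higher homology) to become trivial in the $\A^1$-homotopy category.

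For the second verification I would work \'etale-locally. On an \'etale cover trivializing $\Aa \simeq \M_d(\Oo)$, the stable twisted automorphism group becomes identified with $\GL_\infty$, and the classical Whitehead lemma expresses its commutator subgroup as the subgroup generated by elementary matrices $e_{ij}(\lambda)$. Each such matrix is $\A^1$-homotopic to the identity via the linear path $t \mapsto e_{ij}(t\lambda)$, so the commutator subgroup is $\A^1$-contractible. \'Etale descent then transports this conclusion to the twisted case, since both Nisnevich descent and $\A^1$-invariance are preserved by pullback along \'etale trivializing covers of $\Aa$.

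The main obstacle is to confirm that Robalo's criterion, usually phrased for the untwisted presheaf $\Vect$, applies in the presence of an Azumaya twist. The key point is that Robalo's argument is sufficiently categorical to apply to any presheaf of $\Einfty$-monoids whose stable automorphism groups admit $\A^1$-contracting homotopies for their commutator subgroups, a property which here follows from the \'etale-local reduction to the classical untwisted setting.
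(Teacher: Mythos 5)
The genuine gap is the \'etale descent step. $\A^1$-equivalences and $\A^1$-contractibility do not descend along \'etale covers: the motivic localization inverts only Nisnevich-local equivalences and $\A^1$-projections, not \'etale-local ones. Showing that the commutator subgroup of $\Aut_\Aa(\Aa^{\oplus\infty})$ becomes $\A^1$-contractible after pullback to an \'etale cover trivializing $\Aa$ does not yield the analogous statement for the Nisnevich sheaf on $\Sm_S$. There is also a secondary mismatch: the criterion of \cite[Insight~1.1]{Robalo} as made precise in \cite[Proposition~5.1]{EHKSY4} is not phrased in terms of $\A^1$-contractibility of commutator subgroups, but asks instead that the cyclic permutation of the third power of a generating object be $\A^1$-homotopic to the identity. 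These conditions are morally related, but passing from your condition to the invisibility of the plus construction requires a further argument you have not supplied.

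The paper's proof sidesteps both difficulties. Applying \cite[Proposition~5.1]{EHKSY4} reduces the claim to exhibiting, over each affine $\Spec R$, an $\A^1$-homotopy from the cyclic permutation of $\Aa^3$ to the identity. That permutation is given by a matrix in $\SL_3(\Z)$ (a $3$-cycle is even), so the classifying map $BC_3\to\Vect^\Aa$ factors through $B\SL_3$ with no trivialization of $\Aa$, and $\A^1$-connectedness of $\SL_3$ finishes the proof. Note also that the \'etale descent in your approach is avoidable even on its own terms: $\Aa_R$ is an associative ring, so the Whitehead lemma and the elementary matrices $e_{ij}(a)$ for $a\in\Aa_R$ make sense directly over $\Aa_R$, and the same linear $\A^1$-paths contract them. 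Reformulating your argument this way would bring it closer to the paper's, but the precise link between the contractibility you verify and the conclusion of the proposition would still need to be made explicit.
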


\begin{proof}
By~\cite[Proposition 5.1]{EHKSY4}, it's enough to show that on any affine $S$-scheme $\Spec R$ the cyclic permutation of $\Aa^3$ is $\A^1$-homotopic to identity.  The action of the cyclic group $C_3$ on $\Aa^3$ induces a map of stacks $BC_3 \to \Vect^\Aa$. This map factors through the stack $B \SL_3$, and the group scheme $\SL_3$ is $\A^1$-connected, so the claim follows. 
\end{proof}

\begin{cor}\label{cor:K^A vs sVect^A}
There is a canonical $\A^1$-equivalence on affines:
\[
K^\Aa \simeq \sVect^\Aa.
\]
\end{cor}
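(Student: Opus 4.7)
The plan is to chain together the three identifications that have already been developed in the paragraphs preceding the statement. The target identification $K^\Aa \simeq \sVect^\Aa$ on affines should arise as a composition of an on-the-nose equivalence, the McDuff--Segal group completion theorem, and the $\A^1$-equivalence supplied by the preceding proposition; nothing genuinely new needs to be proven.

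First, I would recall that by the definition of $K^\Aa$ together with Lemma~\ref{lem:affine twisted vb}, the presheaf $K^\Aa$ on affine smooth $S$-schemes agrees with the group completion $\Vect^\Aa(-)^\gp$ of the $\Einfty$-monoid of $\Aa$-twisted vector bundles. Next, I would invoke the formulation of the McDuff--Segal group completion theorem in the form cited just above (namely \cite{mcduff-segal} via \cite[Proposition~6]{Nikolaus}) to rewrite $\Vect^\Aa(R)^\gp$ as the Quillen plus-construction $\sVect^\Aa(R)^+$ on the $+\Aa$-stabilization.

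Finally, I would apply the preceding proposition, which says precisely that on affines the plus-construction applied to $\sVect^\Aa$ does not change the $\A^1$-homotopy type, i.e. $\sVect^\Aa \simeq \Vect^{\Aa,\gp}$ as $\Einfty$-presheaves after $\A^1$-localization. Reading this equivalence in the reverse direction and composing with the previous two identifications yields the claimed $\A^1$-equivalence $K^\Aa \simeq \sVect^\Aa$ on affines. Since each step is functorial in the affine scheme and in the Azumaya algebra, the resulting equivalence is canonical, as required.

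There is no serious obstacle: the work has been done in the two results immediately above the corollary, and the only thing to check is that the three equivalences compose correctly as $\Einfty$-presheaves on affines. The one point worth being careful about is to keep track of the fact that the McDuff--Segal equivalence is an equivalence of $\Einfty$-spaces rather than just of underlying spaces, which is exactly the setting in which \cite[Proposition~6]{Nikolaus} applies.
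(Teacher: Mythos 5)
Your proposal is correct and follows exactly the paper's intended route: the corollary is immediate from the preceding proposition (\(\sVect^\Aa \simeq \Vect^{\Aa,\gp}\) on affines) together with the identification of \(K^\Aa\) on affines with \(\Vect^\Aa(-)^\gp\). The explicit invocation of McDuff--Segal in your middle step is already packaged inside the proposition, so it is harmless but not strictly necessary.
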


\begin{rem}
Since twisted K-theory is Morita-invariant, it follows from Corollary~\ref{cor:K^A vs sVect^A} that for Brauer equivalent Azumaya algebras $\Aa$ and $\Bb$ there is a canonical $\A^1$-equivalence on affines between $\sVect^\Aa$ and $\sVect^\Bb$.
\end{rem}

\begin{prop}
For any affine scheme $\Spec R$, there is a \emph{non-canonical} equivalence of spaces  
\[\sVect^\Aa(R) \simeq K_0^\Aa(R) \times BGL(\Aa). \]
\end{prop}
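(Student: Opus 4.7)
The plan is to identify $\sVect^\Aa(R)$ as a 1-groupoid with $\pi_0 \cong K_0^\Aa(R)$ and $\pi_1 \cong GL(\Aa)$ at every component, and then split it non-canonically using the fact that it is a group-like $\Einfty$-monoid.

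Because $\Vect^\Aa(R)$ is a 1-groupoid and filtered colimits preserve $n$-truncatedness, the sequential colimit $\sVect^\Aa(R) = \colim_n \Vect^\Aa(R)$ is itself a 1-groupoid. The central step is to identify $\pi_0 \sVect^\Aa(R)$ with $K_0^\Aa(R)$ via the natural map sending $[P]$ in the $n$-th copy of $\Vect^\Aa(R)$ to $[P] - n[\Aa]$. For surjectivity I would invoke Lemma~\ref{lem:affine twisted vb}: every $\Aa$-twisted vector bundle $Q$ over an affine $R$ is a summand of some $\Aa^n$, so a general class $[P] - [Q]$ can be rewritten as $[P \oplus Q'] - n[\Aa]$. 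For injectivity, an equality in the Grothendieck group forces $P \oplus \Aa^{n'+N} \simeq P' \oplus \Aa^{n+N}$ after absorbing a witness of stable isomorphism into a large free $\Aa$-module factor (again using Lemma~\ref{lem:affine twisted vb}); this is precisely the relation imposed in the filtered colimit.

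Since $\pi_0 \sVect^\Aa(R) = K_0^\Aa(R)$ is a group, the $\Einfty$-monoid structure on $\sVect^\Aa(R)$ inherited from direct sum on $\Vect^\Aa(R)$ is group-like, so translation by an element of any given component provides an auto-equivalence carrying the basepoint component to that component. Choosing such translations yields a non-canonical splitting
\[
\sVect^\Aa(R) \simeq \pi_0 \sVect^\Aa(R) \times \sVect^\Aa(R)_0,
\]
and being 1-truncated, $\sVect^\Aa(R)_0 \simeq B\pi_1$ with
\[
\pi_1(\sVect^\Aa(R), 0) = \colim_n \Aut_\Aa(\Aa^n) = GL(\Aa),
\]
yielding the stated equivalence. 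The only step that is not entirely formal is the surjectivity in the $\pi_0$ computation, which is where the affineness of $R$ genuinely enters; the rest is a standard decomposition of a group-like $\Einfty$-monoid that happens to be a groupoid.
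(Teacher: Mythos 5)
Your proposal is correct and follows essentially the same approach as the paper: identify $\pi_0$ with $K_0^\Aa(R)$, identify the automorphism group at each component with $GL(\Aa)$, and split the resulting $1$-groupoid non-canonically. The paper is terser — it merely states the $\pi_0$ identification and cites \cite[Section 2.1]{EHKSY3} (adapted by replacing $R$ with $\Aa_R$) for the automorphism-group computation — whereas you give a self-contained argument, using the group-like $\Einfty$-structure to transport the basepoint component to the others and computing $\pi_1$ directly as $\colim_n\Aut_\Aa(\Aa^n)$; both routes arrive at the same splitting for the same reason.
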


\begin{proof}
This follows since $\pi_0 \sVect^\Aa(R) \simeq K_0^\Aa(R)$, and the automorphism group of any stable $\Aa$-twisted vector bundle is isomorphic to $GL(\Aa)$. The latter is explained for non-twisted vector bundles in~\cite[Section 2.1]{EHKSY3}, and the argument in \emph{loc.cit.} applies verbatim for twisted vector bundles, by replacing $R$ with $\Aa_R$. 
\end{proof}

The map of presheaves $\rk^\Aa \colon \Vect^\Aa \to \Z^\Aa$ induces a map 
\[\overline{\rk}^\Aa \colon \sVect^{\Aa} \to \Z^\Aa \]
that sends $V \in \Vect^\Aa$ in the $n$-th component to $ \rk^\Aa(V) - nd$ (this makes sense because the degree of an Azumaya algebra is stable under base change).

Consider the telescope \[ \Vect^\Aa_\infty = \colim (\Vect^\Aa_0 \xrightarrow{+\Aa} \Vect^\Aa_d \xrightarrow{+\Aa} \dots).\]

We get a fiber sequence of presheaves of spaces:
\begin{equation}\label{eq:non-split fiber seq}
\Vect^\Aa_\infty \to  \sVect^{\Aa} \xrightarrow{\overline{\rk}^\Aa} \Z^\Aa.
\end{equation}

\begin{rem}\label{rem:splits}
In the non-twisted case (i.e. when $\Aa$ is trivial), this fiber sequence splits canonically, since there is a map of sheaves $\Z \to \Vect$ sending an integer to a trivial vector bundle of the corresponding rank. However, this construction does not work in the twisted case. Indeed, while the $\Aa$-twisted rank map $\rk^\Aa\colon K_0^\Aa(R)\to \Z^\Aa(R)$ is surjective on all affine schemes $\Spec R$, the authors do not know if it is possible to choose the splitting in a natural way, and therefore whether the map of sheaves $K_0^\Aa\to \Z^\Aa$ has a section.

A related phenomenon can be observed in~\cite{ben-ben}, where the authors show that the index is not necessarily the image of an $\Aa$-twisted vector bundle (although it will be the image of an $\Aa$-twisted perfect complex).
\end{rem}

\begin{lem}
The fiber sequence~\eqref{eq:non-split fiber seq} splits Zariski-locally. 
\end{lem}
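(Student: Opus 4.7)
My plan is to construct, around every point $x\in S$, a Zariski-open neighborhood $U\ni x$ on which the projection $\overline{\rk}^\Aa$ admits a section. Fix $x\in S$, set $R = \sO_{S,x}$, $d_0 = \ind_R(\Aa)$, and write $d = d_0 e$. By \lemref{lem:L_zar K_0} the rank map identifies $K_0^\Aa(R)$ with $d_0\cdot\Z$, so there is a class of rank $d_0$ in $K_0^\Aa(R)$; since $R$ is local, this class is represented by an honest $\Aa$-twisted vector bundle $V_0$ of rank $d_0$. A standard finite-presentation argument then spreads $V_0$ out to an $\Aa$-twisted vector bundle $\widetilde V_0$ of constant rank $d_0$ on some affine Zariski-open $U\ni x$.

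Over $R$ the classes $[V_0^{\oplus e}]$ and $[\Aa]$ in $K_0^\Aa(R)$ coincide, since both have $\Aa$-twisted rank $d$ and $\rk^\Aa$ is injective by \lemref{lem:L_zar K_0}; the classification of finitely generated projective modules over the local Azumaya algebra $\Aa_R$ (again a consequence of \cite{demeyer}) then upgrades this equality to an actual isomorphism $V_0^{\oplus e}\simeq \Aa$ of $\Aa_R$-modules, which extends to $U$ after shrinking. Using these data I define the section $s\colon\Z^\Aa|_U\to\sVect^\Aa|_U$ via the telescope description of $\sVect^\Aa$: representing points of $\sVect^\Aa$ by pairs $(V,n)$ with the relation $(V,n)\sim (V\oplus\Aa,n+1)$, I send $kd_0\in d_0\cdot\Z$ to the class of $(\widetilde V_0^{\oplus(ne+k)},n)$ for any $n\ge\max(0,-k/e)$. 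The isomorphism $\widetilde V_0^{\oplus e}\simeq\Aa$ is precisely what guarantees that different admissible choices of $n$ give the same element of the colimit, and a direct rank computation gives $\overline{\rk}^\Aa\circ s=\id$. Additivity in $k$ is immediate from the definition, so $s$ refines to an $\Einfty$-monoid map.

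Finally, this a priori defines a section of the constant subsheaf $d_0\cdot\Z\subseteq \Z$, and I must verify that it restricts to a section out of $\Z^\Aa|_U$. But $\Z^\Aa$ is a subsheaf of $\Z$ whose stalk at $x$ is $d_0\cdot\Z$, so on any connected open neighborhood $V\ni x$ every section of $\Z^\Aa(V)$ must be divisible by $d_0$; after shrinking $U$ to a connected neighborhood we obtain an inclusion $\Z^\Aa|_U\hookrightarrow d_0\cdot\Z$ of sheaves on $U$, and restricting $s$ along this inclusion yields the desired Zariski-local splitting. The main technical hurdle is producing the isomorphism $V_0^{\oplus e}\simeq \Aa$ (as opposed to a mere equality in $K_0^\Aa$) and extending it to a Zariski-open neighborhood; once this is in place, the telescope structure of $\sVect^\Aa$ and the defining property of $\Z^\Aa$ make everything else formal.
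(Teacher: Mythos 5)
Your construction over the local ring $R=\sO_{S,x}$—producing an $\Aa$-twisted bundle $V_0$ of rank $d_0=\ind_R(\Aa)$, upgrading $[V_0^{\oplus e}]=[\Aa]$ in $K_0^\Aa(R)$ to an honest isomorphism $V_0^{\oplus e}\simeq\Aa$ via DeMeyer's classification, and using it to write down the section on $d_0\cdot\Z$ through the telescope—is precisely the paper's argument (the paper produces $V$ of rank $d_0$ via Morita equivalence rather than by realizing a $K_0$-class, but this is cosmetic). Your explicit observation that the isomorphism $V_0^{\oplus e}\simeq\Aa$ is what makes the formula well-defined across telescope components is a genuine refinement the paper leaves implicit.

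The difficulty is your last step. You assert that, after shrinking $U$ to a connected neighborhood of $x$, one obtains an inclusion $\Z^\Aa|_U\hookrightarrow d_0\cdot\Z$ of sheaves on $U$; but your germ argument only shows $\Z^\Aa(V)\subseteq d_0\cdot\Z$ for connected opens $V$ that actually contain $x$. For a point $v\in U$ that is a generization of $x$, the index $\ind_{\sO_{U,v}}(\Aa)$ divides $d_0$ but may be strictly smaller, so for small $V\ni v$ not containing $x$ one can have $\Z^\Aa(V)\not\subseteq d_0\cdot\Z$, and then your formula for $s$ is not defined on those sections. What you actually need is that $\Z^\Aa$ is a locally constant Zariski sheaf, which is \lemref{lem: Z^A mot space} of the paper—but that lemma assumes $S$ regular of finite type, an assumption absent here, and you never invoke it. The paper's own proof avoids this issue by working only at Zariski stalks (defining the splitting over local rings), which is evidently the intended reading of ``splits Zariski-locally'' and requires no spreading out; your more ambitious globalization to an honest open $U\ni x$ needs the local constancy of $\Z^\Aa$ as an extra input.
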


\begin{proof}
Let $R$ be a local ring. By~\cite[Corollary~1]{demeyer}, there is an Azumaya algebra $\Bb$ of degree $\ind_R(\Aa)$ in the Brauer equivalence class of $\Aa$ on $\Spec R$. Under the Morita equivalence $\Vect^\Aa \simeq \Vect^\Bb$, the $\Bb$-module $\Bb$ corresponds to some $\Aa$-twisted vector bundle $V$ such that $\rk^\Aa(V) = \ind_R(\Aa)$. By Lemma~\ref{lem:L_zar K_0}, the subgroup $\Z^\Aa(R) \subset \Z$ is given by the inclusion $\ind_R(\Aa) \cdot \Z \subset \Z$. Hence we can define the splitting of the surjection $\sVect^{\Aa}(R) \to \Z^\Aa(R)$ by sending $m \cdot \ind_R(\Aa)$ to $V^m$. 
\end{proof}

\begin{prop}\label{prop:fiber seq mot spaces}
After applying motivic localization, the sequence~\eqref{eq:non-split fiber seq} becomes a fiber sequence of motivic spaces in $\H(S)$. 
\end{prop}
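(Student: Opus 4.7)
The plan is to leverage the Zariski-local splitting from the preceding lemma to show that $L_\mot$ preserves the fiber sequence~\eqref{eq:non-split fiber seq}. Since $\Z^\Aa$ is an $\A^1$-invariant Nisnevich sheaf by Lemma~\ref{lem: Z^A mot space}, one has $L_\mot \Z^\Aa \simeq \Z^\Aa$, so the task reduces to showing that the natural comparison map $L_\mot \Vect^\Aa_\infty \to \mathrm{fib}(L_\mot \sVect^\Aa \to \Z^\Aa)$ is an equivalence of motivic spaces.

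First I would apply Nisnevich sheafification $L_\Nis$; as this is a left exact localization, it preserves the fiber sequence of presheaves and yields a fiber sequence of Nisnevich sheaves $L_\Nis \Vect^\Aa_\infty \to L_\Nis \sVect^\Aa \to \Z^\Aa$. Combined with the preceding lemma, which shows that~\eqref{eq:non-split fiber seq} splits on Zariski stalks, one deduces that Nisnevich-locally $L_\Nis \sVect^\Aa$ is equivalent over $\Z^\Aa$ to the product $L_\Nis \Vect^\Aa_\infty \times \Z^\Aa$, with the map to $\Z^\Aa$ being the projection.

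Next I would apply $\A^1$-localization. The key input is that $L_\mot$ preserves finite products, which follows from the fact that $\A^1$-invariant Nisnevich sheaves form a subcategory of $\Pre(\Sm_S)$ closed under finite products (and every reflective localization onto a subcategory closed under a class of limits preserves those limits). Applying $L_\mot$ to the Nisnevich-local product description above gives that, Nisnevich-locally, the sequence $L_\mot \Vect^\Aa_\infty \to L_\mot \sVect^\Aa \to \Z^\Aa$ is the projection $L_\mot \Vect^\Aa_\infty \times \Z^\Aa \to \Z^\Aa$, which is tautologically a fiber sequence. Since being a fiber sequence is a Nisnevich-local property, the conclusion follows in $\H(S)$.

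The main obstacle is that $L_\mot$, unlike $L_\Nis$, is not left exact and so need not commute with fiber formation in general. The Zariski-local splitting from the preceding lemma circumvents this by reducing the question to preserving a product, for which the closure of motivic spaces under finite products is sufficient.
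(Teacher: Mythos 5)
Your argument takes a genuinely different route from the paper's, which avoids the Zariski-local splitting altogether. The paper first replaces $\Z^\Aa$ by the ambient sheaf $\Z$ --- this does not change the fiber, since $\Z^\Aa \hookrightarrow \Z$ is a monomorphism --- and then observes that $\Z$ is constant in the simplicial direction, so that by a result of Rezk the section-wise $\A^1$-localization $\Lhtp$ preserves the fiber sequence. Applying the left-exact $L_\Zar$ and noting that $L_\Zar\Lhtp\sVect^\Aa\simeq K^\Aa$ and $L_\Zar\Lhtp\Z\simeq\Z$ are already motivic spaces (so the fiber is motivic too) completes the proof.

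Your route --- use the splitting to recast the fiber sequence as a product, for which $L_\mot$ behaves well --- is a reasonable alternative idea, but two steps are glossed over. First, a section of a map of $\Einfty$-monoid spaces does not formally give a product decomposition; it does here because the section takes values in the stably invertible objects of $\sVect^\Aa(R)$ (each $V^m$ is a summand of a free $\Aa$-module, hence becomes invertible once $\Aa$ is inverted in the colimit defining $\sVect^\Aa$), so that translation by $s(n)$ identifies the components lying over $\Z^\Aa(R)$. This needs to be stated before the product structure can be invoked. Second, the step ``applying $L_\mot$ to the Nisnevich-local product description'' needs unpacking: the decomposition is only stalk-wise, not a decomposition of (pre)sheaves, and $L_\mot$ is not a priori a stalk-wise functor. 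To make this rigorous one should first extend the splitting from $R$ to $R\times\A^\bullet$ compatibly (this works, using $\A^1$-invariance of $\Z^\Aa$), then argue that $L_\mot$ can be evaluated on Nisnevich stalks because $\Lhtp$ and $L_\Nis$ each commute with the filtered colimits defining stalks, and finally check the fiber sequence after $L_\mot$ on henselian local rings, where the product now persists. These steps can be carried out, but as written your proposal elides them. The paper's proof sidesteps both issues by exploiting the simplicial constancy of $\Z$ rather than the splitting.
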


\begin{proof}
It is enough to show the claim for the fiber sequence 
\begin{equation}\label{eq:non-split fiber seq Z}
\Vect^\Aa_\infty \to  \sVect^{\Aa} \xrightarrow{\overline{\rk}^\Aa} \Z.
\end{equation}
 
For a local ring $R$, the sequence 
\[\Lhtp \Vect^\Aa_\infty(R) \to \Lhtp \sVect^{\Aa}(R) \to \Lhtp \Z(R) \]
is a fiber sequence by~\cite[Proposition~5.4]{rezk-hate}, since the sheaf $\Lhtp \Z$ is constant in the simplicial direction. Hence, the sequence~\eqref{eq:non-split fiber seq Z} remains a fiber sequence after applying $L_\Zar \Lhtp$. Note that $L_\Zar \Lhtp \sVect^{\Aa} \simeq K^\Aa$ is a motivic space, and so is $L_\Zar\Lhtp \Z \simeq \Z$. Hence, the fiber $L_\Zar \Lhtp \Vect^{\Aa}_\infty$ is also a motivic space, and the claim follows. 
\end{proof}

Next, we want to replace $\Vect^\Aa_\infty$ with a more geometric object. 

\begin{defn}
The \emph{$\Aa$-twisted Grassmannian} is the presheaf $\Gr^\Aa_{m,n}$ on affine $S$-schemes sending $\Spec R$ to the set of $\Ff\subseteq\Aa^{\oplus n}_R$ of rank $m$ left $\Aa_R$-submodules such that $\Aa^{\oplus n}_R / \Ff$ is projective as a left $\Aa_R$-module. This construction generalizes Severi-Brauer schemes (in the case $m=1$), and has been studied in $K$-theoretic contexts by various authors \cite{levine-srinivas-weyman, panin, baek}.
\end{defn}

\begin{lem} \label{lem:sm-proper} The presheaf $\Gr^\Aa_{m,n}$ is represented by a smooth proper $S$-scheme.
\end{lem}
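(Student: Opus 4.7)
The plan is to produce $\Gr^\Aa_{m,n}$ as a closed subscheme of a classical Grassmannian and then verify smoothness and properness by descent from an étale trivialization.

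First I would construct an ambient $S$-scheme. Since $\Aa$ is locally free of rank $d^2$ as an $\Oo_S$-module by Lemma~\ref{lem:affine twisted vb}, the $\Oo_S$-module $\Aa^{\oplus n}$ is a vector bundle of rank $nd^2$, and so the ordinary Grassmannian $G := \Gr(md,\Aa^{\oplus n})$ parameterizing rank-$md$ $\Oo$-subbundles of $\Aa^{\oplus n}$ is a smooth projective $S$-scheme. Let $\Ff^{\mrm{taut}}\subseteq \Aa^{\oplus n}\otimes_{\Oo_S}\Oo_G$ be the tautological subbundle. The condition that $\Ff^{\mrm{taut}}$ be stable under left multiplication by $\Aa$ is the vanishing of the composite
\[\Aa\otimes_{\Oo_S}\Ff^{\mrm{taut}}\to \Aa\otimes_{\Oo_S}\Aa^{\oplus n}\otimes_{\Oo_S}\Oo_G\to \Aa^{\oplus n}\otimes_{\Oo_S}\Oo_G\to (\Aa^{\oplus n}\otimes_{\Oo_S}\Oo_G)/\Ff^{\mrm{taut}},\]
which defines a closed subscheme $Z\hookrightarrow G$.

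Next I would verify that $Z$ represents the functor $\Gr^\Aa_{m,n}$. A point of $Z(R)$ is an $\Oo$-subbundle $\Ff\subseteq \Aa^{\oplus n}_R$ of $\Oo$-rank $md$ with locally free quotient of rank $n d^2-md = d(nd-m)$, on which left multiplication by $\Aa$ preserves $\Ff$; equivalently $\Ff$ is a left $\Aa_R$-submodule. To see that $\Ff$ and $\Aa^{\oplus n}_R/\Ff$ are then projective left $\Aa_R$-modules of ranks $m$ and $n-m$, I would pass to an étale cover $R\to R'$ trivializing $\Aa$, so that $\Aa_{R'}\simeq \M_d(\Oo_{R'})$. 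Under the Morita equivalence \eqref{eq:trivializing QCoh}, a left $\M_d(\Oo_{R'})$-submodule of $\M_d(\Oo_{R'})^{\oplus n}$ with $\Oo_{R'}$-locally free quotient corresponds to an $\Oo_{R'}$-subbundle of $\Oo_{R'}^{\oplus nd}$, and the rank and projectivity conditions translate exactly to the conditions defining $\Gr(md,nd)$. In particular both $\Ff$ and the quotient become projective of the required ranks over $R'$, and then projectivity and rank descend by faithfully flat descent. Conversely, any $\Aa_R$-submodule of the form required by $\Gr^\Aa_{m,n}$ is in particular an $\Oo_R$-subbundle of rank $md$ with locally free quotient, hence a point of $Z(R)$.

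The Morita identification also yields an isomorphism
\[Z\times_S S' \simeq \Gr(md,nd)\times_{\Spec\Z} S'\]
for any étale cover $S'\to S$ over which $\Aa$ is trivial. Since $\Gr(md,nd)$ is a smooth projective scheme over $\Spec\Z$, the projection $Z\times_S S'\to S'$ is smooth and proper. Smoothness and properness of a morphism are étale-local on the target \cite[\href{https://stacks.math.columbia.edu/tag/02L0}{02L0}, \href{https://stacks.math.columbia.edu/tag/02L4}{02L4}]{stacks}, so $Z\to S$ is itself smooth and proper, completing the proof. The main technical point is the identification with the closed subscheme cut out by $\Aa$-stability, after which everything reduces to classical facts about the usual Grassmannian.
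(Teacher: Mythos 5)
Your proof is correct and follows the same strategy as the paper's: realize $\Gr^\Aa_{m,n}$ as a closed subscheme of a classical Grassmannian (hence proper), and check smoothness étale-locally by trivializing $\Aa$ and invoking Morita equivalence; you spell out the closed equations and the Morita comparison, which the paper leaves implicit. One small bookkeeping slip: the Morita equivalence sends $\M_d\mapsto\Oo^d$ and thus divides $\Oo$-ranks by $d$, so the subbundle $\Ff$ of $\Oo$-rank $md$ corresponds after trivialization to an $\Oo$-subbundle of rank $m$ inside $\Oo^{nd}$, making $Z$ étale-locally $\Gr(m,nd)$ rather than $\Gr(md,nd)$; likewise the quotient $\Aa^{\oplus n}/\Ff$ has twisted rank $nd-m$ rather than $n-m$. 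This is consistent with the paper's Lemma~\ref{lem:Gr-L^0_bir-contractible}, where $\Gr^\Aa_{md,n}$ trivializes to $\Gr(md,nd)$. None of this affects your argument, since $\Gr(m,nd)$ is of course still smooth and projective.
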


\begin{proof} The Azumaya algebra $\Aa$ has an underlying locally free sheaf $\Ee$. The classical Grassmannian $\Gr_{m,n}(\Ee)$ is represented by a smooth proper scheme. We observe that $\Gr^\Aa_{m,n}\subset \Gr_{m,n}(\Ee)$ is a closed subfunctor, hence it is represented by a proper subscheme. This scheme is smooth, because smoothness is an \'etale-local condition on the target of the forgetful map $\Gr^\Aa_{m,n} \to S$, and the classical Grassmannian $\Gr_{m,n}$ is smooth. 
\end{proof}

We define $\Gr^\Aa_{m,\infty} = \colim_n \Gr^\Aa_{m,n}$, with maps in the colimit induced by $(\id, 0) \colon \Aa^{\oplus n} \hookrightarrow \Aa^{\oplus (n+1)}$. We stabilize further and set 
\[\Gr^\Aa_\infty = \colim (\Gr^\Aa_{0,\infty} \xrightarrow{+\Aa} \Gr^\Aa_{d,\infty} \xrightarrow{+\Aa} \dots).\]

\begin{prop}\label{prop:twisted Gr vs Vect}
The forgetful maps $\Gr^\Aa_{m,\infty} \to \Vect^\Aa_m$ and $\Gr^\Aa_{\infty} \to \Vect^\Aa_\infty$ are $\A^1$-equivalences on affines. 
\end{prop}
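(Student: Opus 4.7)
The plan is to prove the first map is an $\A^1$-equivalence on affines by showing that it is surjective on $\pi_0$ on affines and that its homotopy fibers are $\A^1$-contractible. The second map will then follow by passing to filtered colimits along the stabilization maps.

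For the first claim, I would begin by identifying the homotopy fiber of $\Gr^\Aa_{m,\infty}\to\Vect^\Aa_m$ over a rank-$m$ $\Aa$-twisted vector bundle $V$ on a smooth affine $\Spec R$ with the ind-scheme
\[
\mathrm{St}^\Aa_V=\colim_n \Emb^\Aa(V,\Aa^{\oplus n}),
\]
where $\Emb^\Aa(V,\Aa^{\oplus n})$ is the open subscheme of the affine space $\Hom^\Aa(V,\Aa^{\oplus n})$ consisting of $\Aa$-linear maps with projective cokernel. By \lemref{lem:affine twisted vb}, every rank-$m$ $\Aa$-twisted vector bundle over a smooth affine scheme is a direct summand of some $\Aa^{\oplus n}$, which simultaneously establishes surjectivity on $\pi_0$ on affines and the existence of a basepoint $\phi_0\colon V\hookrightarrow\Aa^{\oplus N}$ in $\mathrm{St}^\Aa_V$.

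The key step is to show that $\mathrm{St}^\Aa_V$ is $\A^1$-contractible. I would construct the linear interpolation
\[
h(t,\phi)=\bigl((1-t)\phi,\,t\phi_0\bigr)\colon V\hookrightarrow \Aa^{\oplus(n+N)}
\]
and verify that it stays inside $\mathrm{St}^\Aa_V$: if $\pi$ is a left inverse to $\phi$ and $\pi_0$ a left inverse to $\phi_0$, then $(\pi,\pi_0)$ is a left inverse to $h(t,\phi)$ since $(1-t)\pi\phi+t\pi_0\phi_0=(1-t)\id+t\id=\id$, so the image has projective cokernel for every $t\in\A^1$. At $t=0$ this recovers the stabilization $\phi\mapsto(\phi,0)$, representing $\phi$ itself in the colimit, while at $t=1$ it lands in an embedding depending on $\phi$ only through its source index $n$. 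Combining $h$ with the standard $\A^1$-homotopy between the shift $\sigma\colon \Aa^{\oplus\infty}\to\Aa^{\oplus\infty}$ and the identity (given by the family $(1-t)\id+t\sigma$, whose injectivity and projective cokernel at every $t$ follow because $\gcd(t^k,(1-t)^k)=1$ in $R[t]$) yields an $\A^1$-deformation retraction of $\mathrm{St}^\Aa_V$ onto $\phi_0$.

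Having shown that the forgetful map has $\A^1$-contractible fibers and is surjective on $\pi_0$ on affines, it is an $\A^1$-equivalence on affines. The second map $\Gr^\Aa_\infty\to\Vect^\Aa_\infty$ is a filtered colimit of such equivalences along the stabilization maps $+\Aa$, and $\A^1$-equivalences are preserved under filtered colimits. The main obstacle is the $\A^1$-contractibility of the fiber: the naive endpoint of $h$ at $t=1$ is not a genuine constant map into $\mathrm{St}^\Aa_V$ because its position within the colimit depends on the source index $n$, which is precisely where the shift-homotopy step is required to trivialize the identity map of $\mathrm{St}^\Aa_V$ in the colimit.
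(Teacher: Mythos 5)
The paper's own ``proof'' is a one-line citation to \cite[Proposition~4.7]{hjnty} for the untwisted case, together with the remark that the argument carries over verbatim upon replacing $A$ with $\Aa_A$. Your direct reconstruction --- identify the homotopy fiber over $V$ with the stable twisted Stiefel ind-scheme $\mathrm{St}^\Aa_V=\colim_n \Emb^\Aa(V,\Aa^{\oplus n})$ and contract it by a linear interpolation --- is the right shape of argument and presumably tracks what \cite{hjnty} does, so this is essentially the same route, just spelled out.

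There is, however, a genuine gap in how you contract $\mathrm{St}^\Aa_V$. Your interpolation $h(t,\phi)=((1-t)\phi,\,t\phi_0)$ does not assemble into a map $\mathrm{St}^\Aa_V\times\A^1\to\mathrm{St}^\Aa_V$: since the transition maps in the colimit append zeros \emph{at the end}, compatibility would require $h$ of the stabilized $(\phi,0)$ to equal the stabilization of $h(\phi,t)$, but $((1-t)\phi,\,0,\,t\phi_0)\neq((1-t)\phi,\,t\phi_0,\,0)$ as points of $\Emb^\Aa(V,\Aa^{\oplus(n+1+N)})$. You correctly flag that the $t=1$ endpoint is $\sigma^n\phi_0$, depending on $n$, but the shift homotopy $(1-t)\id+t\sigma$ cannot repair this by itself: to bring $\sigma^n\phi_0$ back to $\phi_0$ you would need to iterate it $n$ times, and $n$ varies over the colimit, so no fixed composite trivializes all stages uniformly. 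Two clean fixes: (a) skip the global deformation retraction entirely and observe that $h$ already exhibits each transition map $\Emb^\Aa(V,\Aa^{\oplus n})\to\Emb^\Aa(V,\Aa^{\oplus(n+N)})$ as $\A^1$-nullhomotopic (the $t=1$ value is the constant $(0^n,\phi_0)$), and a sequential colimit of nonempty presheaves along $\A^1$-nullhomotopic maps is $\A^1$-contractible --- here the shift homotopy is not needed at all; or (b) reverse the order and use $h'(t,\phi)=(t\phi_0,\,(1-t)\phi)$, which \emph{is} compatible with stabilization (both sides stabilize to $(t\phi_0,(1-t)\phi,0)$) and contracts $\mathrm{St}^\Aa_V$ onto the constant $\phi_0$ from the fixed shift $\sigma^N$; then you compose with the shift homotopy exactly $N$ times, a number independent of $\phi$. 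A smaller point worth noting: the inference ``$\A^1$-contractible homotopy fibers $+$ $\pi_0$-surjectivity $\Rightarrow$ $\A^1$-equivalence on affines'' requires a word, since $\A^1$-localization does not commute with pullbacks; here one can appeal to the $1$-truncatedness of $\Vect^\Aa_m$ and Zariski-local sections, but this step should not be left implicit.
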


\begin{proof}
When $\Aa$ is trivial, this is shown in~\cite[Proposition 4.7]{hjnty}. The argument applies verbatim for general $\Aa$, by replacing $A$ with $\Aa_A$ in \emph{loc.cit.}
\end{proof}

Altogether, we get the following result, which gives a geometric interpretation of the twisted K-theory as a motivic space.

\begin{thm}\label{thm:fiber seq K^A}
There is a fiber sequence of motivic spaces
\[\Gr^\Aa_\infty \to  K^{\Aa} \xrightarrow{\overline{\rk}^\Aa} \Z^\Aa.\]
\end{thm}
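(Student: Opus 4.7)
The plan is to combine three results already at hand: the motivic fiber sequence of Proposition~\ref{prop:fiber seq mot spaces}, the affine $\A^1$-equivalence $K^\Aa \simeq \sVect^\Aa$ of Corollary~\ref{cor:K^A vs sVect^A}, and the affine $\A^1$-equivalence $\Gr^\Aa_\infty \simeq \Vect^\Aa_\infty$ of Proposition~\ref{prop:twisted Gr vs Vect}. By Proposition~\ref{prop:fiber seq mot spaces}, applying $L_\mot$ to the presheaf sequence~\eqref{eq:non-split fiber seq} gives a fiber sequence
\[L_\mot \Vect^\Aa_\infty \to L_\mot \sVect^\Aa \xrightarrow{\overline{\rk}^\Aa} \Z^\Aa\]
in $\H(S)$, so it suffices to replace the outer two terms by $\Gr^\Aa_\infty$ and $K^\Aa$ respectively.

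The underlying general fact is that an $\A^1$-equivalence of presheaves on the full subcategory $\SmAff_S \subseteq \Sm_S$ of smooth affine schemes becomes a motivic equivalence on $\Sm_S$: affines form a basis for the Nisnevich topology, so Nisnevich-local equivalences can be tested on affines, and $\A^1$-invariance is likewise determined by the behaviour on affines. Applied to Proposition~\ref{prop:twisted Gr vs Vect} this produces the motivic equivalence $L_\mot \Gr^\Aa_\infty \simeq L_\mot \Vect^\Aa_\infty$, and applied to Corollary~\ref{cor:K^A vs sVect^A} it produces $L_\mot \sVect^\Aa \simeq L_\mot K^\Aa$. Substituting both into the fiber sequence above then yields $\Gr^\Aa_\infty \to K^\Aa \xrightarrow{\overline{\rk}^\Aa} \Z^\Aa$ in $\H(S)$.

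The only point needing genuine verification is compatibility of the arrows along the substitution: the first map must agree with the natural composite $\Gr^\Aa_\infty \to \Vect^\Aa_\infty \to \sVect^\Aa \simeq K^\Aa$, and the second with $\overline{\rk}^\Aa$ under $\sVect^\Aa \simeq K^\Aa$. Both are built into the construction of $\overline{\rk}^\Aa$ as the stabilisation of $\rk^\Aa\colon \Vect^\Aa \to \Z$ factoring through $\Z^\Aa$, so this amounts to bookkeeping. I do not anticipate any serious obstacle here: all three inputs have been prepared in the preceding subsection, and the present statement is essentially a packaging of them, modulo the standard principle that the motivic category on $\Sm_S$ can be computed on $\SmAff_S$.
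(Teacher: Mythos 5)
Your proposal matches the paper's proof exactly: the paper's one-line proof combines Proposition~\ref{prop:fiber seq mot spaces}, Corollary~\ref{cor:K^A vs sVect^A}, and Proposition~\ref{prop:twisted Gr vs Vect} in precisely the way you describe. Your write-up simply makes explicit the standard point that $\A^1$-equivalences on smooth affines become motivic equivalences on $\Sm_S$, which the paper leaves implicit.
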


\begin{proof}
This is a combination of Proposition~\ref{prop:fiber seq mot spaces}, Corollary~\ref{cor:K^A vs sVect^A} and Proposition~\ref{prop:twisted Gr vs Vect}.
\end{proof}

\section{The motivic spectrum of twisted K-theory}\label{sec:motsptw}

In this section, we assume that the base scheme $S$ is regular and of finite Krull dimension. First, we show that the $\Aa$-twisted K-theory satisfies Bott periodicity.

\begin{prop}
    We have an isomorphism of $K(\Pp^1_S)$-modules:
    $K^\Aa(\Pp^1_S) \simeq K(\Pp^1_S)\otimes_{K(S)} K^\Aa(S)$.
\end{prop}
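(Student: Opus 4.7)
The plan is to prove a projective bundle formula for twisted K-theory via a twisted version of Beilinson's semiorthogonal decomposition. Let $p\colon \Pp^1_S\to S$ be the projection. Using the $K$-module structure on $K^\Aa$ from Remark~\ref{rem: K-module} together with the pullback $p^*\colon K^\Aa(S)\to K^\Aa(\Pp^1_S)$, we get a natural $K(\Pp^1_S)$-linear map
\[\phi\colon K(\Pp^1_S)\otimes_{K(S)} K^\Aa(S)\too K^\Aa(\Pp^1_S),\]
and the task reduces to showing $\phi$ is an equivalence.

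The main step is to start from Beilinson's $\Perf(S)$-linear semiorthogonal decomposition
\[\Perf(\Pp^1_S)=\langle p^*\Perf(S),\, p^*\Perf(S)\otimes\Oo(-1)\rangle.\]
Since $\Perf^\Aa(-)=\Mod_\Aa(\Perf(-))$ and modules base-change along symmetric monoidal functors, there is an identification $\Perf^\Aa(\Pp^1_S)\simeq \Perf^\Aa(S)\otimes_{\Perf(S)}\Perf(\Pp^1_S)$. Tensoring the display above with $\Perf^\Aa(S)$ over $\Perf(S)$ yields the twisted semiorthogonal decomposition
\[\Perf^\Aa(\Pp^1_S)=\langle p^*\Perf^\Aa(S),\, p^*\Perf^\Aa(S)\otimes\Oo(-1)\rangle.\]
K-theory sends semiorthogonal decompositions to direct sums, so $K(\Pp^1_S)$ becomes a free rank-two $K(S)$-module with basis $\{[\Oo],[\Oo(-1)]\}$ acting by pullback-and-twist, and identically $K^\Aa(\Pp^1_S)$ is a free rank-two $K^\Aa(S)$-module on the same two classes. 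Since the $K(\Pp^1_S)$-module structure on $K^\Aa(\Pp^1_S)$ matches these two decompositions summand-wise, $\phi$ is the induced isomorphism between them.

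The only delicate point, and the main obstacle, is the base-change identity $\Perf^\Aa(\Pp^1_S)\simeq \Perf^\Aa(S)\otimes_{\Perf(S)}\Perf(\Pp^1_S)$ together with the compatibility of Beilinson's decomposition with this tensor product. The former is a standard feature of $\Mod_\Aa$ in a symmetric monoidal $\infty$-category of presentable stable $\infty$-categories. The latter holds because the two generators $\Oo,\,\Oo(-1)$ are pulled back from $S$ up to a line bundle that is flat over $S$, so tensoring with $\Perf^\Aa(S)$ over $\Perf(S)$ preserves both the semiorthogonality and the generation of the decomposition.
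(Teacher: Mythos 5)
Your proof takes essentially the same route as the paper's: both rest on the base-change identification $\Perf^\Aa(\Pp^1_S)\simeq \Perf^\Aa(S)\otimes_{\Perf(S)}\Perf(\Pp^1_S)$, then tensor Beilinson's $\Perf(S)$-linear semiorthogonal decomposition with $\Perf^\Aa(S)$ and conclude by additivity of K-theory. One small caveat: you justify the base-change identity by appealing to module theory in presentable stable $\infty$-categories, but the $\Perf$-categories are small and idempotent-complete, not presentable; the paper instead verifies that the comparison map $\Perf(\Pp^1_S)\otimes_{\Perf(S)}\Perf^\Aa(S)\to \Perf^\Aa(\Pp^1_S)$ is an equivalence \'etale-locally (where $\Aa$ trivializes) and hence globally, which cleanly sidesteps that subtlety.
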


\begin{proof}
The tensor product map \[\Perf(\Pp^1_S) \otimes_{\Perf(S)} \Perf^\Aa(S) \to \Perf^\Aa(\Pp^1_S)\] is an isomorphism \'etale-locally, hence an isomorphism of stacks.
    Consider the Beilinson orthogonal decomposition of $\Perf(S)$-modules:
    \[\Perf(\Pp^1_S)\simeq\langle\Perf(S),\Perf(S)\rangle.\] Upon tensoring the decomposition with
    $\Perf^\Aa(S)$, we get
  \[\Perf^\Aa(\Pp^1_S)\simeq\langle\Perf^\Aa(S),\Perf^\Aa(S)\rangle.\]
  The proposition then follows by additivity of K-theory.
\end{proof}

\begin{cor}\label{cor:P1}
    We have $\Omega_{\Pp^1} K^\Aa\simeq K^\Aa$, i.e. $K^\Aa$ satisfies motivic Bott periodicity.
\end{cor}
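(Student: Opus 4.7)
The plan is to derive motivic Bott periodicity for $K^\Aa$ from the $\Pp^1$-formula of the preceding proposition, combined with the classical projective bundle decomposition for untwisted $K$-theory.

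First, I would observe that the previous proposition extends verbatim from the base $S$ to any smooth $S$-scheme $U$: its proof only uses that the tensor product $\Perf(\Pp^1_U)\otimes_{\Perf(U)}\Perf^\Aa(U)\to\Perf^\Aa(\Pp^1_U)$ is an equivalence (which is still étale-local) and the Beilinson decomposition $\Perf(\Pp^1_U)\simeq\langle\Perf(U),\Perf(U)\rangle$, both available over any $U$. This gives a natural equivalence of $K(U)$-modules
\[
K^\Aa(\Pp^1_U) \simeq K(\Pp^1_U) \otimes_{K(U)} K^\Aa(U).
\]

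Second, I would combine this with the classical projective bundle formula $K(\Pp^1_U)\simeq K(U)\oplus K(U)$, where the two summands are generated by $[\Oo]$ and $[\Oo(-1)]$, and the restriction $i_\infty^{*}\colon K(\Pp^1_U)\to K(U)$ along the basepoint section corresponds to projection onto the summand spanned by $[\Oo]$ (so that $i_\infty^{*}$ is split by $p^{*}$). Tensoring with $K^\Aa(U)$ over $K(U)$ and using naturality of the decomposition in $U$, I obtain a natural splitting $K^\Aa(\Pp^1_U)\simeq K^\Aa(U)\oplus K^\Aa(U)$ under which $i_\infty^{*}$ is again a projection onto one factor.

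Finally, by the definition of $\Omega_{\Pp^1}$ in the pointed motivic category, $\Omega_{\Pp^1}K^\Aa(U)$ is the fiber of $i_\infty^{*}\colon K^\Aa(\Pp^1_U)\to K^\Aa(U)$; since this map is (naturally in $U$) the projection onto one factor of a direct sum, its fiber is the complementary summand, which is $K^\Aa(U)$. Passing from the presheaf equivalence to motivic spaces (using that $K^\Aa$ is a motivic space by Proposition~\ref{thm:Nis-descent and A1-invariance}) yields the claimed equivalence $\Omega_{\Pp^1}K^\Aa\simeq K^\Aa$.

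The only non-formal step is checking that the Beilinson splitting of $K(\Pp^1_U)$ is natural enough in $U$ for the decomposition to descend to a natural splitting of presheaves, and that $i_\infty^{*}$ corresponds to the correct summand under this splitting; both are standard and follow from the functorial description of the decomposition at the level of stable $\infty$-categories of perfect complexes.
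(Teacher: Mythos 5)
Your overall strategy---extend Proposition~4.1 to an arbitrary $U\in\Sm_S$ by naturality, tensor the classical projective bundle decomposition of $K(\Pp^1_U)$ with $K^\Aa(U)$ over $K(U)$, and identify $\Omega_{\Pp^1}K^\Aa(U)$ with the complementary summand to $i_\infty^*$---is exactly the paper's (implicit) derivation of Corollary~\ref{cor:P1} from Proposition~4.1, and it works.

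There is, however, one false intermediate claim: under the exceptional-collection decomposition $K(\Pp^1_U)\simeq K(U)\cdot[\Oo]\oplus K(U)\cdot[\Oo(-1)]$, the restriction $i_\infty^*$ is \emph{not} projection onto the $[\Oo]$-summand. Since any line bundle on $\Pp^1$ restricts to the trivial one at a point, $i_\infty^*[\Oo(-1)]=[\Oo_U]$, so in your basis $i_\infty^*$ is the fold map $(a,b)\mapsto a+b$, and its fiber is not either of your two named summands. The decomposition you actually want uses the Bott class $\beta=[\Oo]-[\Oo(-1)]$: one has a natural $K(U)$-module splitting $K(\Pp^1_U)\simeq p^*K(U)\oplus\beta\cdot p^*K(U)$ with $i_\infty^*\beta=0$, so that $i_\infty^*$ is projection onto the first factor and multiplication by $\beta$ identifies $\operatorname{fib}(i_\infty^*)\simeq K(U)$. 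Tensoring this splitting with $K^\Aa(U)$ over $K(U)$ then gives the natural identification $\Omega_{\Pp^1}K^\Aa(U)\simeq K^\Aa(U)$. The fix is routine, but as written your identification of the fiber with the $[\Oo(-1)]$-summand would fail, so make sure to pass to the Bott-class basis.
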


Corollary~\ref{cor:P1} gives an infinite $\Pp^1$-delooping of $K^\Aa$, and so we can define the motivic spectrum of twisted K-theory.

\begin{defn}
The \emph{$\Pp^1$-spectrum of $\Aa$-twisted K-theory} is defined as follows (with bonding maps specified by Corollary~\ref{cor:P1}):
   \[\KGL^\Aa = (K^\Aa, K^\Aa, \dots) \in \SH(S) . \]
\end{defn}

\begin{rem}
Since $K^\Aa$ is a $K$-module by Remark~\ref{rem: K-module}, and the action is compatible with Bott periodicity, the motivic spectrum $\KGL^\Aa$ is a $\KGL$-module.

Similarly, $\kgl^\Aa$, the effective cover of $\KGL^\Aa$, is naturally a $\kgl$-module. In~\cite[Corollary~4.3]{tom-fflat-cancellation}, it was shown that a $\kgl$-module structure is equivalent to a structure of (coherent) pushforwards along finite locally free maps of schemes. Under this equivalence, the structure of transfers on $\kgl^\Aa$ comes from finite locally free covariance of $\Perf^\Aa$.
\end{rem}

Let us write $KH^\Aa(X)$ for the $\A^1$-localization of the Bass K-theory spectrum $K^B$ of $\Perf^\Aa(X)$. That is
\[KH^\Aa(X)=\colim_{n\in\Delta^\op} K^B(\Perf^\Aa(X\times\Delta^n))\,.\]
When $\Aa$ is trivial $KH^\Aa$ is the cohomology theory represented by $\KGL$. We want to show that the same is true for a general $\Aa$.
\begin{prop}\label{prop:KH-represents}
    Let $X$ be a quasi-compact quasi-separated scheme and $\Aa$ an Azumaya algebra over $X$. Then there is an equivalence of spectra
    \[\map_{\SH(X)}(\Sigma^\infty X_+, \KGL^\Aa)\simeq KH^\Aa(X)\]
\end{prop}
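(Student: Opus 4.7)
The plan is to extend the identification of $\KGL^\Aa$ with $K^\Aa$ from smooth $X$-schemes to all qcqs $X$-schemes, by promoting $KH^\Aa$ to a Nisnevich-local, $\A^1$-invariant, $\Pp^1$-periodic presheaf of spectra and identifying the resulting motivic spectrum with $\KGL^\Aa$.

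First, I would verify that $KH^\Aa$ is Nisnevich-local on qcqs $X$-schemes. The localization sequence of Lemma~\ref{lem:localization-sequence} extends to the Bass delooping, so by the Thomason--Trobaugh argument $K^B(\Perf^\Aa)$ is Nisnevich-local on qcqs $X$-schemes; passing to the $\Delta^\bullet$-colimit preserves descent, yielding Nisnevich descent for $KH^\Aa$, while $\A^1$-invariance is automatic from the definition.

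Second, I would verify that $KH^\Aa$ satisfies Bott periodicity, $\Omega_{\Pp^1} KH^\Aa \simeq KH^\Aa$. The Beilinson decomposition $\Perf^\Aa(\Pp^1_X) \simeq \langle \Perf^\Aa(X), \Perf^\Aa(X) \rangle$ used in the proof of Corollary~\ref{cor:P1} is natural and $\Perf^\Aa$-linear, so it passes to Bass K-theory and commutes with $\Delta^\bullet$-colimits. Hence the $\Pp^1$-spectrum $(KH^\Aa, KH^\Aa, \ldots)$ assembles into a motivic spectrum $\KGL^\Aa_{KH} \in \SH(X)$. On smooth $X$-schemes $U$ --- which are regular under this section's standing assumption on the base --- Bass's fundamental theorem gives $K^\Aa(U) \simeq KH^\Aa(U)$, so $\KGL^\Aa_{KH}$ agrees with $\KGL^\Aa$ after restriction to $\Sm_X$, and therefore in $\SH(X)$.

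Third, since $\KGL^\Aa_{KH}$ is by construction the motivic spectrification of a Nisnevich-local, $\A^1$-invariant, $\Pp^1$-periodic presheaf defined on \emph{all} qcqs $X$-schemes, the standard adjunction (as in Cisinski's theorem for $KH$) yields
\[\map_{\SH(X)}(\Sigma^\infty Y_+, \KGL^\Aa_{KH}) \simeq KH^\Aa(Y)\]
for every qcqs $Y \to X$; specializing to $Y = X$ gives the desired equivalence.

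The main technical obstacle is verifying Nisnevich descent and Bott periodicity at the level of Bass K-theory of $\Perf^\Aa$. Both reduce morally to tensoring the classical arguments for $\Perf$ with $\Perf^\Aa$, but care is required to check compatibility with the $\Delta^\bullet$-colimit and with the localization sequence of Lemma~\ref{lem:localization-sequence} in the twisted setting.
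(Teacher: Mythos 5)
Your proposal is correct and follows essentially the same route as the paper's proof, which simply cites Cisinski's representability criterion (his Corollaire 2.11) and says to argue as in his Théorème 2.20. You have spelled out precisely what that argument amounts to in the twisted setting: Nisnevich descent for $K^B(\Perf^\Aa)$ via the localization sequence of Lemma~\ref{lem:localization-sequence} following Thomason--Trobaugh, $\A^1$-invariance by construction of $KH^\Aa$, $\Pp^1$-periodicity via the $\Perf^\Aa$-linear Beilinson decomposition, and then Cisinski's criterion applied to the resulting motivic spectrum, which agrees with $\KGL^\Aa$ on $\Sm_S$ because smooth $S$-schemes are regular.
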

\begin{proof}
    The statement follows from \cite[Corollaire~2.11]{Cisinski} as in the proof of \cite[Théorème~2.20]{Cisinski}.
\end{proof}


One of the first achievements of motivic homotopy theory was a complete proof of cdh descent for homotopy $K$-theory \cite{Cisinski}. The key insight is that any motivic spectrum which is stable under base change satisfies cdh descent. We now prove a base change property for twisted K-theory spectrum to establish its cdh descent. To do so, we will first show that the sheaf $\Z^\Aa$ satisfies base change. We note that cdh descent for twisted homotopy $K$-theory can also be established using methods of Land-Tamme \cite{land-tamme} as explored in the twisted setting by Stapleton \cite{stapleton}. We will later give a \emph{motivic refinement} of the cdh descent of twisted homotopy $K$-theory in the sense that we will endow it with a motivic filtration whose associated graded pieces do enjoy cdh descent.

\begin{prop}\label{prop:pull_Z^A}
Let $f \colon T \to S$ be a morphism. Then the canonical map $f^* \Z^\Aa \to \Z^{f^* \Aa}$ is an isomorphism of Zariski sheaves on $\Sm_T$. 
\end{prop}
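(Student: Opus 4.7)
The plan is to reduce the isomorphism to a stalk-wise verification. Both $f^*\Z^\Aa$ and $\Z^{f^*\Aa}$ sit as Zariski subsheaves of the constant sheaf $\Z$ on $\Sm_T$: the right-hand side directly by Definition~\ref{def:twz}, and the left-hand side by applying the left exact pullback $f^*$ to the inclusion $\Z^\Aa\hookrightarrow\Z$ and using that $f^*\Z\simeq\Z$. Since the canonical map $f^*\Z^\Aa\to\Z^{f^*\Aa}$ is compatible with both embeddings, it suffices to prove that these two subsheaves of $\Z$ coincide, and this can be checked on stalks at local rings $R=\Oo_{U,u}$ with $U\in\Sm_T$ and $u\in U$.

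By Lemma~\ref{lem:L_zar K_0}, the stalk of $\Z^{f^*\Aa}$ at such $R$ equals $\ind_R((f^*\Aa)_R)\cdot\Z$. For $f^*\Z^\Aa$, the key observation is that the defining formula $\Z^\Aa=L_\Zar\Im(\rk^\Aa\colon K^\Aa\to\Z)$ of Definition~\ref{def:twz} extends verbatim from $\Sm_S$ to arbitrary $S$-schemes, since $K^\Aa$ does. As $f^*$ is left exact and commutes with Zariski sheafification and with image formation, the stalk of $f^*\Z^\Aa$ at $R$ coincides with $\Z^\Aa$ evaluated on $\Spec R$, regarded as an $S$-scheme via the composite $\Spec R\to U\to T\xrightarrow{f}S$. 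Applying Lemma~\ref{lem:L_zar K_0} once more identifies this stalk with $\ind_R(\Aa_R)\cdot\Z$, where $\Aa_R$ denotes the pullback of $\Aa$ along that composite.

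Finally, $\Aa_R$ and $(f^*\Aa)_R$ are literally the same Azumaya algebra on $\Spec R$ by functoriality of pullback, so their indices agree and the two stalks coincide. The main subtlety is to make precise that the topos-theoretic pullback $f^*$ reads off on stalks in the way described; this follows because $f^*$ is left exact and $\Z^\Aa$ is assembled via image formation and Zariski sheafification from presheaves already defined on all $S$-schemes, while every Zariski covering of a smooth $T$-scheme lifts tautologically to a Zariski covering of $S$-schemes.
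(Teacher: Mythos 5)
The proposal has a genuine gap at the key step. You correctly reduce to stalks and correctly identify the stalk of $\Z^{f^*\Aa}$ at $R=\Oo_{U,u}$ with $\ind_R(\Aa_R)\cdot\Z$ via Lemma~\ref{lem:L_zar K_0}. However, the claim that ``the stalk of $f^*\Z^\Aa$ at $R$ coincides with $\Z^\Aa$ evaluated on $\Spec R$, regarded as an $S$-scheme'' is not a consequence of left exactness, commutation with Zariski sheafification, or the lifting of Zariski covers. The topos-theoretic pullback $f^*$ is a left Kan extension along $\Sm_S\to\Sm_T$, so on presheaves its value at a local ring is the \emph{filtered colimit}
\[(f^*\Z^\Aa)_z \;\simeq\; \colim_{\Spec\Oo_{Z,z}\to\Spec R'\to S}\;K_0^\Aa(R')\,,\]
where $R'$ ranges over essentially smooth local $S$-algebras equipped with a map to $R=\Oo_{Z,z}$. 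This is \emph{not} the value of the ``naive extension'' of $\Z^\Aa$ at $\Spec R$: the ring $R$ is essentially smooth over $T$ but in general is not a filtered colimit of essentially smooth local $S$-algebras (e.g.\ take $T$ non-reduced over $S$), so there is no continuity argument that identifies the colimit with $K_0^\Aa(R)$.

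Concretely, using your own framing: each term in the colimit is $\ind_{R'}(\Aa_{R'})\cdot\Z$, and all these are subgroups of $\ind_R(\Aa_R)\cdot\Z$ since the index can only drop under base change. So the stalk map is injective for trivial reasons, but \emph{surjectivity} requires producing an essentially smooth local $S$-algebra $R'\to R$ such that $\ind_{R'}(\Aa_{R'})=\ind_R(\Aa_R)$, i.e.\ realizing a minimal-rank $\Aa$-twisted vector bundle over $R$ as a pullback from $R'$. This is precisely what the paper establishes using smoothness of the twisted Grassmannian (Lemma~\ref{lem:sm-proper}) and, for the injectivity on full $K_0$-classes, smoothness of the $\operatorname{Iso}$-scheme (Lemma~\ref{lem:Iso-is-smooth-scheme}). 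These geometric inputs are the real content of the proof, and your abstract argument does not replace them. To fix the proof you would need to add the surjectivity argument: choose $V$ realizing the index over $R$, present it as a pullback from $\Gr^\Aa_{S}$, and localize to obtain the required $R'$.
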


\begin{proof}
Since $f^*\Z^\Aa$ and $\Z^{f^*\Aa}$ are 0-truncated, it suffices to check that the map is an isomorphism on stalks.

The sheaf $f^*\Z^\Aa$ can be identified with the sheafification of the presheaf on $\Sm_T$ sending
\[[Z\to T]\mapsto \colim_{Z\to W\to S} \Z^\Aa(W)\]
where the colimit is indexed by all factorizations $Z\to W\to S$ of $Z\to T\to S$ where $W\to S$ is a smooth $S$-scheme. Let $Z\in\Sm_T$ and choose a point $z\in Z$, then the stalk of $f^*\Z^\Aa$ at $z$ is therefore
\[(f^*\Z^\Aa)_z\simeq \colim_{U\ni z} \colim_{U\to W\to S} \Z^\Aa(W)\simeq \colim_{\Spec \Oo_{Z,z}\to W\to S} \Z^{\Aa}(W)\]
where the last colimit is indexed through all factorizations of $\Spec \Oo_{Z,z}\to Z\to S$ through a smooth $S$-scheme $W$. Note that  $\Z^{\Aa}$ commutes with filtered colimits, as it is the Zariski sheafification of $K_0^\Aa$ by Lemma~\ref{lem:L_zar K_0}. Hence we can replace $W$ with $\Oo_{W,w}$ (where $w$ is the image of $z$) and rewrite this colimit as
\[(f^*\Z^\Aa)_z\simeq \colim_{\Spec \Oo_{Z,z}\to \Spec R\to S} \Z^{\Aa}(\Spec R)\simeq\colim_{\Spec \Oo_{Z,z}\to \Spec R\to S}K^{\Aa}_0(R)\]
where the colimit now ranges through all factorizations through an essentially smooth local $S$-scheme $\Spec R$ such that the map $R\to \Oo_{Z,z}$ is a local ring homomorphism. Moreover, the stalk of $\Z^{f^*\Aa}$ at $z$ is precisely $K_0^{\Aa}(\Oo_{Z,z})$, and the map we need to prove is an isomorphism is the pullback map
\[\colim_{\Spec \Oo_{Z,z}\to \Spec R\to S}K^{\Aa}_0(R)\to K^{\Aa}_0(\Oo_{Z,z})\,.\]
We will prove separately that it is injective and surjective.

For surjectivity it suffices to show that every $\Aa$-twisted vector bundle $V$ over $\Oo_{Z,z}$ can be obtained by pullback from some essentially smooth local ring $R$ over $S$. But, by choosing a set of generators as an $\Aa$-module, we can obtain it as a pullback from $\Gr^{\Aa}_{\Oo_{Z,z}}$ and therefore from $\Gr^\Aa_{\Oo_{S,s}}$ where $s$ is the image of $z$ in $S$. Since the $\Aa$-twisted Grassmannian is smooth by Lemma~\ref{lem:sm-proper}, this proves surjectivity.

Let us now prove injectivity. Suppose we have an essentially smooth local ring $R$ and a class $[V]-[V']\in K^\Aa_0(R)$ that is sent to 0 by the pullback along $f\colon \Spec \Oo_{Z,z}\to \Spec R$. Up to adding a suitable free $\Aa$-twisted vector bundle to both $V$ and $V'$, this means that there exists an isomorphism $\varphi\colon f^*V\simeq f^*V'$. Therefore the map $\Spec \Oo_{Z,z}\to \Spec R$ factorizes as
\[\Spec \Oo_{Z,z}\to \operatorname{Iso}_\Aa(V,V')\to \Spec R\]
where $\operatorname{Iso}_\Aa(V,V')$ is the smooth affine scheme of Lemma~\ref{lem:Iso-is-smooth-scheme} below. Therefore, by taking $\Spec R'$ to be the localization of $\operatorname{Iso}_\Aa(V,V')$ at the image of $z$, there is a factorization
\[\Spec \Oo_{Z,z}\to \Spec R' \to \Spec R\]
where $R'$ is an essentially smooth $S$-scheme where $V$ and $V'$ become isomorphic. Hence the class $[V]-[V']$ becomes 0 in the colimit.
\end{proof}

\begin{lem}\label{lem:Iso-is-smooth-scheme}
    Let $S$ be a qcqs scheme, $\Aa$ an Azumaya algebra over $S$ and $V$ and $W$ be two $\Aa$-twisted vector bundles. Consider the presheaf $\operatorname{Iso}_\Aa(V,W)$ sending every $S$-scheme $p\colon T\to S$ to the set $\{\varphi\colon p^*V\to p^*W\}$ of isomorphisms of $\Aa$-twisted vector bundles over $T$. Then $\operatorname{Iso}_\Aa(V,W)$ is represented by a smooth affine $S$-scheme.
\end{lem}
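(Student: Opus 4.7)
The plan is to first construct the scheme $\uHom_\Aa(V,W)$ of $\Aa$-linear morphisms as a vector bundle over $S$, then realize $\operatorname{Iso}_\Aa(V,W)$ as a closed subscheme of a product of two such Hom-bundles, and finally verify smoothness by the infinitesimal lifting criterion. The main subtlety lies in the first step, where I must check that the Azumaya twist does not obstruct the finite local freeness of the Hom sheaf.

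For the first step, I would show that the $\Oo_S$-module $\uhom_\Aa(V,W)$ is finite locally free. By \lemref{lem:affine twisted vb}, on any affine open $\Spec R \subseteq S$ the module $V(R)$ is finitely generated and projective over $\Aa(R)$, hence a direct summand of some $\Aa(R)^{\oplus m}$; then $\Hom_{\Aa(R)}(V(R), W(R))$ is a direct summand of $\Hom_{\Aa(R)}(\Aa(R)^{\oplus m}, W(R)) \simeq W(R)^{\oplus m}$, which is finite projective over $R$ since $W(R)$ is. Because Hom between finitely generated projective modules commutes with base change, the total space of the associated vector bundle defines a smooth affine $S$-scheme $\uHom_\Aa(V,W)$ representing the functor $[p\colon T\to S] \mapsto \Hom_{\Aa_T}(p^*V, p^*W)$.

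For the second step, I would realize $\operatorname{Iso}_\Aa(V,W)$ as a closed subscheme of $\uHom_\Aa(V,W) \times_S \uHom_\Aa(W,V)$, cut out by the vanishing of the two universal sections $\varphi \circ \psi - \id_W$ and $\psi \circ \varphi - \id_V$ of the vector bundles $\uEnd_\Aa(W)$ and $\uEnd_\Aa(V)$. Since the inverse of an isomorphism is unique, the first projection induces a bijection between the $T$-points of this closed subscheme and $\operatorname{Iso}_\Aa(V,W)(T)$ for every $T$; hence $\operatorname{Iso}_\Aa(V,W)$ is representable and affine as a closed subscheme of an affine $S$-scheme.

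For the final step on smoothness, I would use the infinitesimal lifting criterion. Given a square-zero closed immersion $T \hookrightarrow T'$ and an isomorphism $\varphi\colon V_T \to W_T$, smoothness of $\uHom_\Aa(V,W)$ yields an $\Aa_{T'}$-linear lift $\tilde\varphi\colon V_{T'} \to W_{T'}$ of $\varphi$. This lift is automatically an isomorphism: its reduction modulo the square-zero ideal agrees with $\varphi$, and since $V_{T'}$ and $W_{T'}$ are finitely generated $\Oo_{T'}$-modules by \lemref{lem:affine twisted vb}, Nakayama applied stalkwise concludes. An alternative route for smoothness would be to trivialize $\Aa$ on an \'etale cover $S'\to S$ and use the Morita equivalence to identify $\operatorname{Iso}_\Aa(V,W)_{S'}$ with the classical isomorphism scheme of two ordinary vector bundles, which is well-known to be smooth; smoothness then descends along $S'\to S$.
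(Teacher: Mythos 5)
Your primary argument is correct but takes a genuinely different route from the paper. The paper observes that $\operatorname{Iso}_\Aa(V,W)$ is an fpqc sheaf (since the prestack of $\Aa$-modules is an fpqc stack), trivializes $\Aa$ étale-locally to reduce to the untwisted case, where the isomorphism functor is either empty or a $\GL_n$-torsor, and then concludes by descent of smooth affine morphisms. Your route is more hands-on: you build the internal Hom as a vector bundle (using that $V$ is f.g.\ projective over $\Aa$ to get that $\uhom_\Aa(V,W)$ is finite locally free as an $\Oo_S$-module and commutes with base change), cut $\operatorname{Iso}_\Aa(V,W)$ out of $\uHom_\Aa(V,W)\times_S\uHom_\Aa(W,V)$ as the equalizer with $(\id_V,\id_W)$, which gives representability and affineness directly, and then verify smoothness by the infinitesimal criterion plus a Nakayama-type argument that a lift of an isomorphism across a square-zero extension is again an isomorphism. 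What your route buys is an explicit presentation and the stronger intermediate statement that $\uHom_\Aa(V,W)$ is literally a vector bundle; what the paper's route buys is brevity, at the cost of invoking effectivity of descent for affine morphisms. Your ``alternative route'' paragraph at the end is precisely the paper's proof. One small point to keep in mind in the Nakayama step: the relevant fact is that $V$ and $W$ are finite \emph{locally free} as $\Oo$-modules (Lemma~\ref{lem:affine twisted vb}(3)), not merely finitely generated, so that stalkwise the lift is a map of finite free modules which is an isomorphism modulo a nilpotent ideal and hence has unit determinant — you gesture at this but it is worth being explicit.
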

\begin{proof}
    Since the prestack of $\Aa$-modules is an fpqc stack by \cite[\href{https://stacks.math.columbia.edu/tag/023T}{023T}]{stacks}, the presheaf $\operatorname{Iso}_\Aa(V,W)$ is an fpqc sheaf. Moreover étale-locally we can find a trivialization of $\Aa$, so we can assume $\Aa$ to be trivial. But then, étale-locally, $\operatorname{Iso}_\Aa(V,W)$ is either empty (if $\rk V\neq \rk W$) or a $\GL_{\rk V}$-torsor. In either case it is represented by a smooth affine $S$-scheme.
\end{proof}

Using Proposition~\ref{prop:pull_Z^A}, we can deduce the base change property for $\KGL^\Aa$.

\begin{thm} \label{thm:pull} Let $S$ be a regular scheme of finite Krull dimension, and $\Aa$ an Azumaya algebra over $S$. The formation of the motivic twisted $K$-theory spectrum
\[
p\colon X \rightarrow S \, \mapsto \, KGL^{p^*\Aa}
\]
defines a Cartesian section of the Cartesian fibration $\int \SH \rightarrow \Sch_S$. In particular, the twisted homotopy $K$-theory spectrum is a cdh sheaf and satisfies Milnor excision.
\end{thm}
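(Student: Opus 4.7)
The plan is to reduce the Cartesian section property to base change for the underlying motivic space $K^\Aa$, at which point \thmref{thm:fiber seq K^A} combined with \propref{prop:pull_Z^A} will do the work. Once base change for $\KGL^\Aa$ is established, cdh descent and Milnor excision follow from general principles via \propref{prop:KH-represents}.

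First, I would observe that $\KGL^\Aa$ is defined as the constant $\P^1$-spectrum $(K^\Aa, K^\Aa, \ldots)$ with bonding maps given by Bott periodicity (\corref{cor:P1}). The periodicity isomorphism is implemented by tensor product with the class $[\Oo(1)] - [\Oo] \in K(\P^1_S)$, which is manifestly compatible with pullback. Hence for any $f\colon T\to S$, the canonical map $f^*\KGL^\Aa \to \KGL^{f^*\Aa}$ in $\SH(T)$ is an equivalence as soon as the analogous map $f^* K^\Aa \to K^{f^*\Aa}$ in $\H(T)$ is.

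Next, I would apply $f^*\colon \H(S) \to \H(T)$ to the fiber sequence
\[\Gr^\Aa_\infty \to K^\Aa \to \Z^\Aa\]
of \thmref{thm:fiber seq K^A}. The functor $f^*$ is a left exact symmetric monoidal left adjoint (it is the motivic localization of presheaf pullback, whose defining functor $X \mapsto X \times_S T$ on $\Sm_S$ preserves finite limits), so it preserves fiber sequences. The leftmost term is a filtered colimit of the smooth $S$-schemes $\Gr^\Aa_{m,n}$ of \lemref{lem:sm-proper}, and by construction $f^*\Gr^\Aa_{m,n}$ represents $\Gr^{f^*\Aa}_{m,n}$ as a $T$-scheme. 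The rightmost term is identified with $\Z^{f^*\Aa}$ by \propref{prop:pull_Z^A}. Naturality of the fiber sequence in the pair $(S,\Aa)$ produces a map to the fiber sequence for $(T, f^*\Aa)$ in which two of three terms are equivalences, forcing $f^* K^\Aa \simeq K^{f^*\Aa}$.

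Finally, for the cdh descent and Milnor excision clauses, I would invoke Cisinski's argument~\cite{Cisinski} and the Elmanto--Hoyois--Iwasa--Kelly argument~\cite{EHIK}: the base change property just established, combined with \propref{prop:KH-represents} which represents $KH^\Aa$ by $\KGL^\Aa$, is exactly the hypothesis needed in \emph{loc.~cit.}\ to deduce cdh descent and Milnor excision for homotopy $K$-theory, and these arguments apply verbatim. The main obstacle is really \propref{prop:pull_Z^A}, i.e., that $\Z^\Aa$ itself is stable under base change; the rest is essentially formal once the fiber sequence of \thmref{thm:fiber seq K^A} is in hand, since the twisted Grassmannians pull back on the nose and Bott periodicity is visibly compatible with pullback.
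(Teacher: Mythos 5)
Your proposal takes essentially the same route as the paper: reduce the Cartesian section property to base change for $K^\Aa$ via the fiber sequence of \thmref{thm:fiber seq K^A}, invoke \propref{prop:pull_Z^A} together with the pullback-stability of the twisted Grassmannians, and deduce cdh descent and Milnor excision from \cite{Cisinski} and \cite{EHIK}. One caveat about your justification: the functor $f^*\colon\H(S)\to\H(T)$ is \emph{not} left exact in general --- the presheaf-level pullback $X\mapsto X\times_S T$ does induce a left exact functor on presheaves, but the $\A^1$-localization one must subsequently apply destroys left exactness (this is precisely why $\H(S)$ fails to be an $\infty$-topos) --- so ``$f^*$ preserves fiber sequences'' cannot be invoked as a blanket principle. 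What saves the argument is that $\Z^\Aa$ is $0$-truncated and the sequence splits Zariski-locally (the lemma following Remark~\ref{rem:splits}), so that the comparison $f^*K^\Aa\to K^{f^*\Aa}$ is Zariski-locally a product of the two known equivalences and being an equivalence can be checked locally; the paper elides this point in the same way, so this is a refinement rather than a disagreement with its proof.
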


\begin{proof} The first statement follows once we know that for any morphism of $S$-schemes $f\colon X \rightarrow Y$, with structure mas $p_X\colon X \rightarrow S, p_Y\colon Y \rightarrow S$ the canonical comparison map in $\SH(X)$
\[
f^*KGL^{p_X^*\Aa} \rightarrow KGL^{p_Y^*\Aa}
\]
is an equivalence of motivic spectra. By construction of $\KGL^\Aa$, it's enought to check that the comparison map $f^*K^{p_X^*\Aa} \rightarrow K^{p_Y^*\Aa}$ is an equivalence of motivic spaces in $\H(X)$. By Theorem~\ref{thm:fiber seq K^A}, $K^\Aa$ as a motivic space is an extension of $\Z^\Aa$ by $\Gr_\infty^\Aa$. Then the claim follows from Proposition~\ref{prop:pull_Z^A}, because the twisted Grassmannians are stable under pullbacks. 

The base change property implies that $\KGL^\Aa$ is a cdh sheaf by~\cite[Proposition~3.7]{Cisinski}, and that it satisfies Milnor excision by~\cite[Corollary~2]{EHIK}.
\end{proof}

\section{Twisted motivic cohomology}\label{sec:motcohtw}

In this section, we compute the slices of $\Aa$-twisted K-theory, which allows us to construct the $\Aa$-twisted motivic spectral sequence.  Previously, Kahn and Levine have computed the slices in the case when the base scheme $S = \Spec k$ is the spectrum of a perfect field $k$~\cite[Theorem~6.5.5]{levine-kahn}. In this section, we will first provide a different proof of this computation (Theorem~\ref{thm:s_0KGL=HZ}) and then generalize it to the case when the Azumaya algebra is not pulled back from the base field (Proposition~\ref{thm:s_0KGL-general-base}).

To define $\Aa$-twisted motivic cohomology, we employ the theory of framed transfers, which was developed in~\cite{voevodsky2001notes, garkusha2014framed, EHKSY1} and other works. We summarize here the main structural results that we use.

\begin{enumerate}

\item Every cohomology theory represented by a motivic spectrum  has a unique structure of framed transfers, i.e. a structure of a presheaf on the $\infty$-category of framed correspondences $\Corr^\fr(\Sm_S)$ \cite[Theorem~18]{framed-loc}. More precisely,  
\[\SH(S) \simeq \SH^\fr(S),\] 
where $\SH^\fr(S)$ is the $\infty$-category of framed motivic spectra.

\item The framed suspension spectrum $\Sigma^\infty_\fr \colon \H^\fr(S) \to \SH^\fr(S)$, going from framed motivic spaces to (framed) motivic spectra, is fully faithful on grouplike objects, when $S$ is a perfect field~\cite[Theorem~3.5.14]{EHKSY1}. \\

\item There is a canonical  functor $\Corr^\fr(\Sm_S) \to \Cor_S$ from the  $\infty$-category of framed correspondences to the category of Voevodsky's finite correspondences~\cite[Section~5.3]{EHKSY1}. In particular, every presheaf with Voevodsky's transfers has canonical framed transfers.

\end{enumerate}

Recall that the presheaf of abelian groups $\Z^\Aa$ is a motivic space with a canonical structure of Voevodsky's finite transfers by Lemmas~\ref{lem: Z^A mot space} and~\ref{lem: Z^A transfers}, hence it represents a framed motivic space, which we also denote  $\Z^\Aa$.

\begin{defn}\label{def:twisted-mot}
We define the \emph{$\Aa$-twisted motivic cohomology spectrum} to be 
\[H\Z^\Aa := \Sigma^\infty_\fr\Z^\Aa \in \SH^\fr(S) \simeq \SH(S).\]

When $\Aa$ is trivial, the motivic spectrum $H\Z^\Aa$ coincides with the Spitzweck motivic cohomology spectrum, constructed in~\cite{SpitzweckHZ}, by~\cite[Theorem~21]{framed-loc}. Moreover, $H\Z^\Aa$ is stable under base change by Proposition~\ref{prop:pull_Z^A}, and in particular, the corresponding cohomology theory satisfies cdh descent by \cite[Proposition~3.7]{Cisinski}. Therefore it makes sense to say that $H\Z^\Aa$ represents $\Aa$-twisted Spitzweck motivic cohomology.
\end{defn} 

We recall the definition of the slice filtration in motivic homotopy theory, constructed in~\cite{Voevodsky:2002c}. 
\begin{defn}Let $S$ be a scheme. Then the $\infty$-category $\SH(S)^{\eff}$ of \emph{effective motivic spectra} (resp. the $\infty$-category $\SH_{S^1}(S)^{\eff}$ of \emph{effective $S^1$-spectra}) is the localizing subcategory of $\SH(S)$ (resp. $\SH_{S^1}(S)$) generated by the objects of the form $\Sigma^\infty_\T X_+$ (resp. $\Sigma^\infty_{S^1}X_+$) for $X\in \Sm_S$. For every $n\in\Z$ (resp. $n\in \N$) we call $\Sigma^n_\T\SH(S)^{\eff}$ the $\infty$-category of \emph{$n$-effective motivic spectra} and we write $f_n$ for the left adjoint to the inclusion $\Sigma^n_\T\SH(S)^{\eff}\subseteq\SH(S)$. Since $\Sigma^{n+1}_\T\SH(S)^{\eff}\subseteq\Sigma^n_\T\SH(S)^{\eff}$, for every $E\in \SH(S)$ we obtain a functorial $\Z$-indexed tower
\[ \cdots \to f_2 E\to f_1E\to \to \cdots \to E\]
called the \emph{slice tower} of $E$. Each layer $s_iE\coloneqq\cofib(f_{i+1}E\to f_iE)$ is called the \emph{$i$-th slice} of $E$. 
\end{defn}

We will use the following, more explicit description of 0-th slices. Consider the functor $L^0_{\bir}\colon\H^\fr(k)\to \H^\fr(k)$, which is the localization generated by the birational open embeddings of smooth varieties \cite[Section~3]{tom-elden-slices}. The following result follows from the main insight of~\cite{tom-elden-slices}.

\begin{prop}\label{lem:birational==0-slice} Let $k$ be a perfect field and $X \in \H^\fr(k)$ be a framed motivic space. 
\begin{enumerate}
\item The map
$X \rightarrow L^0_{\bir}X$
is an equivalence after applying $s_0\Sigma^{\infty}_\fr$.
\item Assume that $L^0_{\bir}X$ is grouplike, then $\Sigma^{\infty}_\fr L^0_{\bir}X$ is a 0-slice, i.e., we have an equivalence
\[
\Sigma^{\infty}_\fr L^0_{\bir}X \simeq s_0\Sigma^{\infty}_\fr X.
\]
In particular, any $L^0_{\bir}$-local grouplike object in $\H^{\fr}(k)$ becomes a 0-slice upon applying the functor $\Sigma^{\infty}_\fr$.
\end{enumerate}
\end{prop}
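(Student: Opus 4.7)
The plan is to deduce both statements from the main insight of Bachmann--Elmanto \cite{tom-elden-slices}: the zeroth slice of a framed suspension spectrum factors through the birational localization $L^0_\bir$ at the level of framed motivic spaces.

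For part~(1), the unit $X \to L^0_\bir X$ is constructed as a transfinite composite of pushouts along dense open embeddings $U \hookrightarrow V$ in $\Sm_k$. Since $s_0$ and $\Sigma^\infty_\fr$ are both colimit-preserving, it suffices to check that every such embedding becomes an equivalence after applying $s_0 \Sigma^\infty_\fr$; this is the birational invariance of the zeroth slice established in \cite{tom-elden-slices}.

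For part~(2), since $\Sigma^\infty_\fr L^0_\bir X$ is automatically effective, one only needs right-orthogonality to $\Sigma_\T \SH(k)^\eff$, i.e.\ the vanishing $\map_{\SH(k)}(\Sigma^\infty_\fr(\T \wedge W_+), \Sigma^\infty_\fr L^0_\bir X) \simeq 0$ for every $W \in \Sm_k$. By full faithfulness of $\Sigma^\infty_\fr$ on grouplike framed motivic spaces (item~(2) of the recollection above), this reduces to showing $\Omega_\T L^0_\bir X \simeq \ast$. The dense open immersion $W \times \Gm \hookrightarrow W \times \A^1$ forces $L^0_\bir X(W \times \Gm) \simeq L^0_\bir X(W \times \A^1)$; combined with the $\A^1$-invariance of $L^0_\bir X$, this yields vanishing of $\Omega_\Gm L^0_\bir X$, and hence of $\Omega_\T L^0_\bir X$ via the cofiber sequence $\Gm \to \A^1 \to \T$.

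The main obstacle is to reconcile $\Omega_\T$ with $L^0_\bir$ at the level of grouplike framed motivic spaces, and to confirm that $L^0_\bir X$ inherits $\A^1$-invariance and Nisnevich-sheafiness from $X$ so that the reduction above is legitimate. Both points are handled in \cite{tom-elden-slices}.
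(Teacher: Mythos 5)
Your approach is the same as the paper's for both parts, but part~(2) has a genuine gap in the reduction step.

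For part~(1) the argument matches the paper: the $L^0_\bir$-localization of $\H^\fr(k)$ is generated (via the free functor $F\colon \H(k)_\ast\to\H^\fr(k)$) by the $L^0_\bir$-equivalences of pointed motivic spaces, and \cite[Lemma~13.(5)]{tom-elden-slices} sends these to $s_0$-equivalences because their cofibers become $1$-effective; commutativity of the colimit-preserving square $\Sigma^\infty_\T\circ(-) \simeq \Sigma^\infty_\fr\circ F$ then finishes. This is exactly what the paper records.

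For part~(2), the gap is in the sentence ``by full faithfulness \dots this reduces to showing $\Omega_\T L^0_\bir X\simeq\ast$.'' Full faithfulness of $\Sigma^\infty_\fr$ on grouplike objects identifies the mapping \emph{space} $\Map_{\SH(k)}(\Sigma^\infty_\fr F(\T\wedge W_+),\Sigma^\infty_\fr L^0_\bir X)$ with $\Map_{\H^\fr(k)}(F(\T\wedge W_+),L^0_\bir X)=(\Omega_\T L^0_\bir X)(W)$, and your birational argument correctly shows the latter is contractible. But what you need to conclude $f_1\Sigma^\infty_\fr L^0_\bir X=0$ is the vanishing of the mapping \emph{spectrum} $\map_{\SH(k)}(\Sigma^\infty_\T(\T\wedge W_+),\Sigma^\infty_\fr L^0_\bir X)$, i.e.\ also the vanishing of its negative homotopy groups. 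The space-level contractibility says nothing about $\pi_{-n}$ for $n>0$; one cannot simply shuffle the suspension onto $L^0_\bir X$, since $\Sigma^n L^0_\bir X$ is no longer $L^0_\bir$-local. The paper treats this separately: $\Sigma^\infty_\fr$ takes values in the connective part of the homotopy $t$-structure on $\SH(k)$ (since it preserves colimits and sends generators $F(W_+)$ to $\Sigma^\infty_\T W_+$, which are connective by Morel stable connectivity), whence $\underline{\pi}_\ast(\Sigma^\infty_\fr L^0_\bir X)_{-1}=0$ for $\ast<0$ for free. Adding that connectivity observation closes the gap; as written, the reduction to $\Omega_\T L^0_\bir X\simeq\ast$ is not a complete proof that $\Sigma^\infty_\fr L^0_\bir X$ is a $0$-slice.
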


\begin{proof} We will write $F:\H(k)_\ast\to \H^{\fr}(k)$ for the left adjoint to the functor $\H^{\fr}(k)\to \H(k)_\ast$ that forgets the framed transfers. To prove (1), consider the commutative diagram
\[
\begin{tikzcd}
\H(k)_{\ast} \ar{r}{F} \ar[swap]{d}{\Sigma_{S^1}^\infty} & \H^{\fr}(k) \ar{d}{\Sigma^{\infty}_{\fr}}\\
\SH^{S^1}(k) \ar[swap]{r}{\Sigma^{\infty}_{\Gm}} & \SH(k)^{\eff}
\end{tikzcd}\,.
\]
According to \cite[Lemma 13.(5)]{tom-elden-slices}, the $L^0_{\bir}$-equivalences in $\H(k)_\ast$ are sent to $s_0$-equivalences in $\SH(k)$ (since their cofibers are sent to $1$-effective spectra). Moreover the $L^0_{\bir}$-equivalences in $\H^{\fr}(k)$ are, by definition, generated under colimits by the image under $F$ of the $L^0_{\bir}$-equivalences of $\H(k)_\ast$. Therefore the commutativity of the above diagram and the fact that all functors in it preserve all colimits, imply that the $L^0_{\bir}$-equivalences in $\H^{\fr}(k)$ are sent to $s_0$-equivalences in $\SH(k)$. But this is the claim of (1).

We now prove (2). After (1), we need only to prove that $\Sigma^{\infty}_\fr L^0_{\bir}X$ is a 0-slice, assuming that $L^0_{\bir}X$ is grouplike. Since $\Sigma^{\infty}_\fr L^0_{\bir}X$ is effective, it suffices to show that its first effective cover is trivial, i.e., that the homotopy sheaves $\underline{\pi}_{\ast}(\Sigma^{\infty}_{\fr}L^0_{\bir}X)_{-1}$ are zero.  For $\ast < 0$, this follows since the image of $\Sigma^{\infty}_{\fr}$ lands inside connective objects for the homotopy t-structure in $\SH(k)$. For $\ast\geq0$, we use that $\Sigma^{\infty}_{\fr}$ is fully faithful on grouplike objects by the cancellation theorem~\cite{agp}. Hence  $\underline{\pi}_\ast(\Sigma^\infty_{\fr} L^0_{\bir}X)_{-1}$ is the sheaf associated to the presheaf
\[U\mapsto \pi_\ast\Map_{\SH(k)}(\Sigma^\infty_{\fr}F(\G_{m}\wedge U_+),\Sigma^\infty_{\fr}L^0_{\bir}X)\simeq \pi_\ast\Map_{\H(k)^{\fr}}(F(\G_{m}\wedge U_+),L^0_{\bir}X)\,.\]
However, since $\G_m\times U\to \A^1\times U$ is a birational map, we get
\[\Map_{\H(k)^{\fr}}(F(\G_{m}\wedge U_+),L^0_{\bir}X)\simeq \Map_{\H(k)^{\fr}}(F(\A^1\wedge U_+),L^0_{\bir}X)\simeq \ast\,,\]
which is what we wanted to show.
Since an $L^0_{\bir}$-local object $X$ satisfies $X \simeq L^0_{\bir}X$, the last claim of (2) follows.
\end{proof}

\begin{rem}\label{rem:fr} It is plausible that $\Sigma^{\infty}_{\fr}L^0_{\bir}X \simeq s_0\Sigma^{\infty}_{\fr}X$ for $X \in \H^\fr(k)$ such that  $L^0_{\bir}X$ is \emph{not necessarily grouplike}, which would then give a geometric formula for the 0-th slice of $\Sigma^{\infty}_{\fr}X$. Since we do not need this result, we leave it to the interested reader.
\end{rem}

Our goal in this section is to prove that the $n$-th slice $s_n\KGL^\Aa$ is equivalent to $\Sigma^n_\T H\Z^\Aa$. To do so, we will use the techniques for slice computations developed in~\cite{tom-elden-slices}. Recall that we write $\kgl^\Aa$ for the effective cover of $\KGL^\Aa$.
\begin{lem}\label{lem:kgl-is-very-effective}
    Let $X$ be a scheme essentially smooth over a field and $\Aa$ an Azumaya algebra over $X$. Then the motivic spectrum $\kgl^\Aa$ is very effective.
\end{lem}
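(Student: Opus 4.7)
The plan is to identify $\kgl^\Aa$ with the framed suspension spectrum $\Sigma^\infty_\fr K^\Aa$ and invoke the recognition principle. First, I would observe that $K^\Aa \simeq \Omega^\infty_\T \KGL^\Aa$ is the underlying motivic space of a canonical grouplike framed motivic space $\Omega^\infty_\fr \KGL^\Aa \in \H^\fr(X)^{\gp}$, via the equivalence $\SH(X) \simeq \SH^\fr(X)$ recalled at the beginning of this section. The framed transfer structure on $K^\Aa$ is thus inherited directly from the motivic spectrum $\KGL^\Aa$.

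Next, I would invoke the recognition principle of Elmanto-Hoyois-Khan-Sosnilo-Yakerson, which asserts that, over a perfect field, the functor $\Sigma^\infty_\fr \colon \H^\fr(X)^{\gp} \to \SH(X)$ is fully faithful with essential image in $\SH(X)^{\veff}$; equivalently, for any $E \in \SH(X)$, the counit $\Sigma^\infty_\fr \Omega^\infty_\fr E \to E$ realizes the very effective cover $\tilde f_0 E \to E$. Applied to $E = \KGL^\Aa$, this identifies $\kgl^\Aa \simeq \Sigma^\infty_\fr K^\Aa$, which lies in $\SH(X)^{\veff}$ by construction. The identification uses the universal property of the very effective cover together with the fact that both $\kgl^\Aa$ and $\Sigma^\infty_\fr K^\Aa$ map to $\KGL^\Aa$ inducing the identity on $\Omega^\infty_\T = K^\Aa$, combined with the conservativity of $\Omega^\infty_\T$ (or $\Omega^\infty_\fr$) on very effective spectra.

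For $X$ essentially smooth over a non-perfect field $k$, I would reduce to the perfect closure: such $X$ remains essentially smooth over $k^\perf$, and the base-change functor $\SH(X_{k^\perf}) \to \SH(X)$ is conservative and preserves (and detects) the subcategory of very effective motivic spectra, as it is a symmetric monoidal left adjoint sending generators $\Sigma^\infty_\T Y_+$ to generators.

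The main technical obstacle is ensuring that the framed transfer structure used in the recognition principle matches the one coming from $\KGL^\Aa$ under $\SH \simeq \SH^\fr$; this is tautological by construction, but must be checked for the argument to give the desired equivalence $\kgl^\Aa \simeq \Sigma^\infty_\fr K^\Aa$ rather than just a map.
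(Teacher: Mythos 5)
Your argument has a genuine gap, and the gap is exactly the content that the lemma is asking you to prove. You correctly note that $\Sigma^\infty_\fr K^\Aa \simeq \Sigma^\infty_\fr\Omega^\infty_\fr\KGL^\Aa$ is very effective (this is the recognition principle), and that this spectrum maps to $\KGL^\Aa$ through the effective cover, giving a map $\Sigma^\infty_\fr K^\Aa \to \kgl^\Aa$ which induces an equivalence on $\Omega^\infty_\T$. But at this point you invoke the conservativity of $\Omega^\infty_\fr$ \emph{on very effective spectra}, which can only be applied if you already know both the source and the target are very effective. The target $\kgl^\Aa$ is, by definition, the \emph{effective} cover $f_0\KGL^\Aa$, and the entire content of the lemma is to show that this effective cover happens to be very effective — i.e., that the canonical map $\tilde{f}_0\KGL^\Aa \to f_0\KGL^\Aa$ is an equivalence. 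The argument as written is therefore circular.

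The source of the difficulty is that $\Omega^\infty_\T$ is not conservative on effective spectra: an effective spectrum can carry nontrivial negative homotopy sheaves that $\Omega^\infty_\T$ cannot detect. So matching $\Omega^\infty_\T$ on both sides does not force the map $\Sigma^\infty_\fr K^\Aa \to \kgl^\Aa$ to be an equivalence; the cofiber could be a nonzero effective spectrum with $\Omega^\infty_\T$ trivial. The missing ingredient is a genuine connectivity input. The paper's proof supplies it: it first reduces to the case where the base is the spectrum of a field via base change (\thmref{thm:pull} together with Lemmas B.1 and B.3 of \cite{norms}), then applies the criterion of \cite[Theorem~2.3]{HoyoisMGL}, which says that an effective spectrum is very effective once its mapping spectra out of essentially smooth henselian local schemes are connective. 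To check this connectivity, the paper appeals to Bernardara's semiorthogonal decomposition \cite[Theorem~4.1]{bernardara}: $K^B(\Perf^\Aa(Y))$ is a summand of $K^B(\SB(\Aa)_Y)$, and the Severi-Brauer scheme is smooth over the regular $Y$, hence regular Noetherian, so its Bass $K$-theory is connective. Your proof plan would need to be supplemented with an argument of this kind (or an alternative vanishing result for negative twisted $K$-groups of essentially smooth henselian local rings) before it could close.

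A secondary, but real, issue: the recognition principle of \cite[Theorem~3.5.14]{EHKSY1} is stated over a perfect field, not over a general essentially smooth $k$-scheme $X$. Your reduction addresses passage from an imperfect field to its perfection, but not the reduction from an essentially smooth $X/k$ to $\Spec k$ itself; the paper handles both reductions simultaneously with the base-change statement cited above.
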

\begin{proof}
    By Theorem~\ref{thm:pull}, \cite[Lemma~B.1 and Proposition~B.3]{norms} it suffices to show the thesis when $X$ is the spectrum of a field. The motivic spectrum $\kgl^\Aa$ is effective by definition, so by \cite[Theorem~2.3]{HoyoisMGL} it suffices to check that for $Y$ an essentially smooth henselian local scheme, the spectrum
    \[\map_{\SH(X)}(\Sigma^\infty Y_+,\kgl^\Aa)\simeq\map_{\SH(X)}(\Sigma^\infty Y_+,\KGL^\Aa)\simeq K^B(\Perf^\Aa(Y))\]
    is connective. But by \cite[Theorem~4.1]{bernardara} this spectrum is a summand of the Bass K-theory of  $\SB(\Aa)$, the Severi-Brauer scheme of $\Aa$. Since $\SB(\Aa)$ is smooth over $Y$, it is a regular Noetherian scheme and so its Bass K-theory is connective.
\end{proof}

The following computation is the key geometric input for identifying the slices of $\KGL^\Aa$.

\begin{lem}\label{lem:Gr-L^0_bir-contractible}
    The motivic space $\Gr_\infty^\Aa$ is $L^0_{\bir}$-contractible.
\end{lem}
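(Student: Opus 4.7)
The plan is, first, to reduce to the case of finite twisted Grassmannians using that $L^0_{\bir}$ preserves colimits, and second, to exhibit in each $\Gr^\Aa_{m,n}$ a dense open ``big cell'' isomorphic to an affine space over $k$.

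For the reduction, $L^0_{\bir}$ is a left Bousfield localization and hence commutes with colimits. Since $\Gr^\Aa_\infty$ is a sequential colimit of the smooth proper schemes $\Gr^\Aa_{md,n}$ along pointed maps, it is enough to show that each individual $\Gr^\Aa_{m,n}$ becomes contractible after $L^0_{\bir}$: the sequential colimit of contractible pointed motivic spaces along basepoint-preserving maps is contractible.

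For the construction of the big cell, fix the $\Aa$-linear direct-sum decomposition $\Aa^n = \Aa^m \oplus \Aa^{n-m}$ and let $U \subseteq \Gr^\Aa_{m,n}$ be the open subscheme parametrising rank-$m$ left $\Aa$-submodules $\Ff \subseteq \Aa^n$ for which the composite $\Ff \hookrightarrow \Aa^n \twoheadrightarrow \Aa^m$ is an isomorphism. Such $\Ff$ are precisely the graphs of $\Aa$-linear maps $\Aa^m \to \Aa^{n-m}$, so
\[
U \;\simeq\; \Hom_\Aa(\Aa^m,\Aa^{n-m})
\]
as a $k$-scheme. Since the base is a field and $\Aa$ is free of rank $d^2$ as a $k$-module, this $\Hom$-scheme is isomorphic to the affine $k$-space $\A^{d^2 m(n-m)}_k$. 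Openness of $U$ in $\Gr^\Aa_{m,n}$ is standard (it is the iso-locus of a morphism of locally free $\Aa$-modules of the same rank), and its density can be verified étale-locally on $\Spec k$: upon trivialising $\Aa$, Morita equivalence identifies $\Gr^\Aa_{m,n}$ with the classical Grassmannian $\Gr_{md,nd}$ in a way that matches $U$ with the standard big cell, which is well known to be dense.

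Putting this together, $U$ is $\A^1$-contractible while $U \hookrightarrow \Gr^\Aa_{m,n}$ is a dense open immersion of smooth $k$-schemes, hence is inverted by $L^0_{\bir}$ by the very definition of the localisation. Consequently
\[
L^0_{\bir}\Gr^\Aa_{m,n} \;\simeq\; L^0_{\bir}U \;\simeq\; L^0_{\bir}\A^{d^2 m(n-m)}_k \;\simeq\; \ast,
\]
and the lemma follows upon passing to the filtered colimit in $(m,n)$. The only delicate point in the argument is the identification of the big cell with an honest affine space: this uses essentially that $\Aa$ is free as a $k$-module, a feature of the ground field; over a more general regular base one would instead obtain a Zariski-locally trivial vector bundle, which remains $\A^1$-contractible via its zero section, so the same strategy would adapt mutatis mutandis.
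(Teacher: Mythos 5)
Your argument is correct and coincides with the paper's proof: exhibit the big cell $\Hom_\Aa(\Aa^m,\Aa^{n-m})\subseteq\Gr^\Aa_{md,n}$ as a dense open subscheme whose underlying $S$-scheme is a vector bundle (an affine space when the base is a field), hence $\A^1$-contractible, so each finite twisted Grassmannian is killed by $L^0_{\bir}$, and pass to the filtered colimit. One small caveat in phrasing: a reflective localization such as $L^0_{\bir}$ does not ``commute with colimits'' as an endofunctor of $\H^\fr(k)$; what one actually uses is that $L^0_{\bir}$-equivalences are closed under filtered colimits, so the basepoint inclusions $\ast\to\Gr^\Aa_{md,n}$ assemble into an $L^0_{\bir}$-equivalence $\ast\to\Gr^\Aa_\infty$, which is exactly the claim.
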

\begin{proof}
    Let $d$ be the degree of $\Aa$. It suffices to show that $\Gr^\Aa_{md,n}$ is $L^0_{\bir}$-contractible for every $m$ and $n\ge md$, since $\Gr_\infty^\Aa=\colim_{n\ge md}\Gr_{md,n}^\Aa$.
    
    We want to show that $\Gr^\Aa_{md,n}$ contains an open dense subscheme that is Zariski-locally $\A^1$-contractible.
        Let $\Hom_\Aa(\Aa^m,\Aa^{n-m})$ be the sheaf of $\Aa$-linear maps $\Aa^m\to \Aa^{n-m}$. Then there is a map \[\Hom_\Aa(\Aa^m,\Aa^{m-n})\to \Gr^\Aa_{md,n}\] sending a map $f:\Aa^m\to \Aa^{m-n}$ to its graph in $\Aa^m\times \Aa^{n-m}\simeq \Aa^n$. This map is a dense open embedding, since étale-locally $\Aa$ is trivial, and in that case this map is one of the standard charts of the Grassmannian $\Gr_{md,nd}$. We observe that 
        \[\Hom_\Aa(\Aa^m,\Aa^{n-m}) \simeq  \Aa^{m(n-m)}.\] 
        Therefore $\Gr^\Aa_{md,n}$ is birational to a vector bundle, and thus it is $L^0_{\bir}$-trivial. 
\end{proof}

Using the above lemma we can present an alternative proof of \cite[Theorem~6.5.5]{levine-kahn}.
\begin{thm}\label{thm:s_0KGL=HZ}
    Let $k$ be a perfect field, $\Aa$ be an Azumaya algebra over $k$ and $f\colon S\to \Spec k$ be an essentially smooth $k$-scheme. Then the map in $\SH(S)$
    \[\Sigma^\infty_\fr \Vect^{f^*\Aa}\to \Sigma^\infty_\fr\Z^{f^*\Aa} = H\Z^{f^*\Aa}\,,\]
    induced by the $\Aa$-twisted rank map $\rk^\Aa \colon \Vect^\Aa \to \Z^\Aa$, can be identified with the canonical map
    \[\kgl^{f^*\Aa}\to s_0(\kgl^{f^*\Aa})\,.\]
    Therefore for every $n\in\Z$ there is an equivalence
    \[s_n\KGL^{f^*\Aa}\simeq s_n\KGL\otimes_{H\Z}H\Z^{f^*\Aa}\simeq\Sigma^n_\T H\Z^{f^*\Aa}\,.\]
\end{thm}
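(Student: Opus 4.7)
The plan is to identify $\kgl^{f^*\Aa}$ with $\Sigma^\infty_\fr K^{f^*\Aa}$ using the framed recognition principle, and then to exploit the fiber sequence of Theorem~\ref{thm:fiber seq K^A} together with Proposition~\ref{lem:birational==0-slice}. First I would note that $\kgl^{f^*\Aa}$ is very effective by Lemma~\ref{lem:kgl-is-very-effective}, so by the framed recognition theorem (the fully faithfulness of $\Sigma^\infty_\fr$ on grouplike framed motivic spaces over a perfect field), it is the framed suspension spectrum of its underlying grouplike framed motivic space, which can be identified with $K^{f^*\Aa}$ via the comparison $\Omega^\infty_\T \kgl^{f^*\Aa} \simeq K^{f^*\Aa}$. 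The framed transfers on $K^{f^*\Aa}$ inherited this way are the canonical ones coming from motivic representability, and those on $\Z^{f^*\Aa}$ come from Voevodsky transfers via Lemma~\ref{lem: Z^A transfers}.

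Next I would apply $\Sigma^\infty_\fr$ to the fiber sequence
\[\Gr^{f^*\Aa}_\infty \to K^{f^*\Aa} \xrightarrow{\overline{\rk}^{f^*\Aa}} \Z^{f^*\Aa}\]
from Theorem~\ref{thm:fiber seq K^A}. Since all three terms are grouplike, this becomes a cofiber sequence of motivic spectra. By Lemma~\ref{lem:Gr-L^0_bir-contractible}, $\Gr^{f^*\Aa}_\infty$ is $L^0_{\bir}$-contractible, and hence by part~(1) of Proposition~\ref{lem:birational==0-slice} the spectrum $\Sigma^\infty_\fr \Gr^{f^*\Aa}_\infty$ has vanishing $0$-slice (i.e.\ is $1$-effective). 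Consequently the induced map $\kgl^{f^*\Aa} \to H\Z^{f^*\Aa}$ is an $s_0$-equivalence. On the other hand, $\Z^{f^*\Aa}$ is a birational sheaf of abelian groups by Lemma~\ref{lem: Z^A mot space}, hence $L^0_{\bir}$-local and grouplike, and so part~(2) of Proposition~\ref{lem:birational==0-slice} shows that $H\Z^{f^*\Aa} = \Sigma^\infty_\fr \Z^{f^*\Aa}$ is itself already a $0$-slice. Combining these two observations yields the identification $s_0(\kgl^{f^*\Aa}) \simeq H\Z^{f^*\Aa}$ together with the claimed canonical map.

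For the statement about higher slices, I would use the $\T$-periodicity of $\KGL^{f^*\Aa}$ from Corollary~\ref{cor:P1}, which implies $s_n \KGL^{f^*\Aa} \simeq \Sigma^n_\T s_0 \KGL^{f^*\Aa}$. Since $f_1 \KGL^{f^*\Aa} = f_1 \kgl^{f^*\Aa}$, we have $s_0 \KGL^{f^*\Aa} \simeq s_0 \kgl^{f^*\Aa} \simeq H\Z^{f^*\Aa}$, giving $s_n \KGL^{f^*\Aa} \simeq \Sigma^n_\T H\Z^{f^*\Aa}$. The intermediate identification $s_n \KGL^{f^*\Aa} \simeq s_n \KGL \otimes_{H\Z} H\Z^{f^*\Aa}$ then follows from the classical Voevodsky–Levine computation $s_n \KGL \simeq \Sigma^n_\T H\Z$ and the $\KGL$-module structure on $\KGL^{f^*\Aa}$.

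The main technical point that I expect to need care is the passage to framed transfers: in particular, ensuring that the fiber sequence of Theorem~\ref{thm:fiber seq K^A} can be lifted compatibly to framed motivic spaces so that $\Sigma^\infty_\fr$ produces the desired cofiber sequence of spectra, and that the $L^0_{\bir}$-contractibility of $\Gr^{f^*\Aa}_\infty$ established in the unframed setting transports to the framed setting (this uses that $L^0_{\bir}$-equivalences in $\H^\fr$ are generated under colimits by the images of those in $\H$ via the left adjoint $F$, as in the proof of Proposition~\ref{lem:birational==0-slice}). Once these compatibilities are verified, the slice computation reduces cleanly to the birational vanishing of the twisted Grassmannian and the birational nature of $\Z^{f^*\Aa}$.
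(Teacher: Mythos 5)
Your approach follows the same architecture as the paper's proof, using the same three key inputs: the recognition principle to identify $\kgl^{f^*\Aa}$ as a framed suspension spectrum, the $L^0_{\bir}$-contractibility of $\Gr^\Aa_\infty$ (Lemma~\ref{lem:Gr-L^0_bir-contractible}) combined with Proposition~\ref{lem:birational==0-slice}.(1) to kill the zeroth slice of the fiber, and the birationality of $\Z^\Aa$ combined with Proposition~\ref{lem:birational==0-slice}.(2) to show $H\Z^\Aa$ is already a $0$-slice. There are, however, two places where your argument elides steps that the paper handles carefully and that are not automatic.

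First, the motivic recognition principle~\cite[Theorem~3.5.14]{EHKSY1} is a statement over a perfect field, not over an arbitrary essentially smooth $k$-scheme $S$. The paper therefore begins by reducing to $S=\Spec k$, verifying that all terms in the statement ($\Z^\Aa$ via Proposition~\ref{prop:pull_Z^A}, $\KGL^\Aa$ via Theorem~\ref{thm:pull}, $s_0$ and $\kgl$ via the references to Hoyois and Bachmann--Hoyois) commute with essentially smooth base change. Your proof applies $\Sigma^\infty_\fr$ to objects over $S$ directly, which requires this preliminary reduction; without it the recognition step is not available.

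Second, your central step --- ``since all three terms are grouplike, this becomes a cofiber sequence of motivic spectra'' --- is not a complete justification. The functor $\Sigma^\infty_\fr$ is a left adjoint and preserves cofiber sequences, not fiber sequences; and a fiber sequence of grouplike framed motivic spaces is a cofiber sequence only when the second map is a $\pi_0$-surjection (equivalently, when the fiber taken in motivic spectra remains very effective). Equivalently: the equivalence between grouplike framed motivic spaces and very effective spectra does not a priori identify the fiber $\overline{\kgl}^\Aa := \fib(\kgl^\Aa \to H\Z^\Aa)$ in $\SH(k)$ with $\Sigma^\infty_\fr\Gr^\Aa_\infty$, unless one knows $\overline{\kgl}^\Aa$ is very effective. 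The paper secures this by observing that the map $\kgl^\Aa \to H\Z^\Aa$ is Zariski-locally $0$-connected (because $\Z^\Aa$ is by definition the sheafified image of $\rk^\Aa$ on $K_0^\Aa$), so that the fiber is connective in the homotopy $t$-structure, hence very effective, and only then invokes the recognition principle to match it with $\Sigma^\infty_\fr\Gr^\Aa_\infty$. You correctly flag the ``passage to framed transfers'' as delicate at the end, but the actual issue is the effectivity of the fiber rather than merely the existence of a framed lift of the fiber sequence. Once these two points are supplied, your argument is a faithful reorganization of the paper's proof and goes through.
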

\begin{proof}
    By \cite[Remark~4.20]{HoyoisMGL}, \cite[Theorem~B.4]{norms},  Proposition~\ref{prop:pull_Z^A} and Theorem~\ref{thm:pull}, all terms in the statement of the theorem are stable under essentially smooth base change. Therefore we can assume that $S=\Spec k$ is the spectrum of a perfect field.
    
    By the motivic recognition principle~\cite[Theorem~3.5.14]{EHKSY1} and Lemma~\ref{lem:kgl-is-very-effective} we have
    \[\kgl^\Aa \simeq \Sigma^\infty_\fr \Vect^\Aa \simeq \Sigma^\infty_\fr K^\Aa.\]
    
    By Lemma~\ref{lem: Z^A mot space}, $\Z^\Aa$ is a birational sheaf. Hence the motivic spectrum $H\Z^\Aa$ belongs to the image of the localizing functor $s_0$ on effective motivic spectra by Lemma~\ref{lem:birational==0-slice}.(2).
    
    Let now $\overline{\kgl}^\Aa$ be the fiber of the map $\kgl^\Aa\to H\Z^\Aa$, induced by $\rk^\Aa$. It remains to be proven that $s_0(\overline{\kgl}^\Aa)=0$. Since for any smooth $S$-scheme $U$, the map of sheaves of spectra
    \[K^\Aa(U)=\map(\Sigma^\infty U_+,\kgl^\Aa)\to \map(\Sigma^\infty U_+,H\Z^\Aa)=H(\Z^\Aa(U))\]
    is Zariski-locally 0-connected, it follows that $\kgl^\Aa$ and $\overline{\kgl}^\Aa$ are very effective. By Theorem~\ref{thm:fiber seq K^A} and \cite[Theorem~3.5.14]{EHKSY1} we see that
    \begin{equation}\label{eq:kgl-gr}
    \overline{\kgl}^\Aa\simeq \Sigma^\infty_\fr \Gr_\infty^\Aa,
    \end{equation}
    for some structure of framed transfers on $\Gr_\infty^\Aa$. 
    
   But now we have equivalences:
   \begin{equation*}
   s_0(\overline{\kgl}^\Aa) \simeq  s_0(\Sigma^\infty_\fr \Gr_\infty^\Aa) \simeq s_0(\Sigma^{\infty}_{\fr}L_0^{\bir}\Gr_\infty^\Aa) = 0.
   \end{equation*}
    Indeed, the first equivalence comes from~\eqref{eq:kgl-gr}, the second equivalence is Lemma~\ref{lem:birational==0-slice}.(1) and the third equivalence is Lemma~\ref{lem:Gr-L^0_bir-contractible} together with \cite[Lemma 13.(4)]{tom-elden-slices}.

    The final statement follows from Bott periodicity for $\KGL^\Aa$, see  Corollary~\ref{cor:P1}.
\end{proof}

\begin{cor}\label{cor:s_0KGL-imperfect}
    Let $S=\Spec k$ be the spectrum of a field of exponential characteristic $e$ and $\Aa$ be an Azumaya algebra over $k$. Then the map in $\SH(S)$
    \[\Sigma^\infty_\fr \Vect^\Aa [1/e]\to \Sigma^\infty_\fr\Z^\Aa [1/e]= H\Z^\Aa[1/e],\]
    induced by the $\Aa$-twisted rank map $\rk^\Aa \colon \Vect^\Aa \to \Z^\Aa$, can be identified with the canonical map
    \[\kgl^\Aa[1/e]\to s_0(\kgl^\Aa[1/e])\,.\]
   Moreover for every $n\in\Z$ there is an equivalence
    \[s_n\KGL^\Aa[1/e]\simeq s_n\KGL\otimes_{H\Z}H\Z^{\Aa}[1/e]\simeq\Sigma^n_\T H\Z^{\Aa}[1/e]\,.\]
\end{cor}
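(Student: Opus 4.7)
The plan is to reduce Corollary~\ref{cor:s_0KGL-imperfect} to the perfect-field case, Theorem~\ref{thm:s_0KGL=HZ}, by pulling back along the perfection morphism $p\colon \Spec k^{\mathrm{perf}} \to \Spec k$. This morphism is a cofiltered limit of finite purely inseparable extensions, all of $e$-power degree, so after inverting $e$ it is harmless from the perspective of motivic homotopy theory. The argument decouples into three natural steps: a base-change equivalence, stability of the relevant objects under base change, and invocation of Theorem~\ref{thm:s_0KGL=HZ} over $k^{\mathrm{perf}}$.

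For the first step I would invoke the perfection theorem of Elmanto--Khan: the functor $p^*\colon \SH(k)[1/e]\to \SH(k^{\mathrm{perf}})[1/e]$ is a symmetric monoidal equivalence. In particular $p^*$ is conservative after inverting $e$; and since it sends the set of generators $\Sigma^\infty_\T \T^n\wedge X_+$ for $X\in\Sm_k$ to the corresponding generators over $k^{\mathrm{perf}}$, it preserves $n$-effectivity and hence intertwines the slice filtrations, so that $p^*f_n E\simeq f_n p^*E$ and $p^*s_nE\simeq s_n p^*E$ for every $E$ after inverting $e$.

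For the second step I would verify that the objects appearing in the statement pull back correctly. Theorem~\ref{thm:pull} gives $p^*\KGL^\Aa\simeq \KGL^{p^*\Aa}$, and since effective covers commute with (essentially) smooth base change, also $p^*\kgl^\Aa\simeq \kgl^{p^*\Aa}$. By Proposition~\ref{prop:pull_Z^A} one has $p^*\Z^\Aa\simeq \Z^{p^*\Aa}$, and since $H\Z^\Aa=\Sigma^\infty_\fr\Z^\Aa$ with framed suspension spectra stable under pullback, this gives $p^*H\Z^\Aa\simeq H\Z^{p^*\Aa}$. Finally, the $\Aa$-twisted rank map is manifestly natural in the base. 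Applying Theorem~\ref{thm:s_0KGL=HZ} over the perfect field $k^{\mathrm{perf}}$ to $p^*\Aa$ then identifies $\kgl^{p^*\Aa}\to H\Z^{p^*\Aa}$ with the slice cover $\kgl^{p^*\Aa}\to s_0\kgl^{p^*\Aa}$ and establishes $s_n\KGL^{p^*\Aa}\simeq \Sigma^n_\T H\Z^{p^*\Aa}$ integrally. Transporting back through the equivalence $p^*$ (using Step~1) yields the $[1/e]$-inverted version of these statements over $k$, which is exactly the claim.

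The main obstacle is Step~1: ensuring that $p^*$ is an equivalence \emph{and} compatible with the slice filtration after inverting $e$. The equivalence is the content of Elmanto--Khan's theorem, while the slice-compatibility follows formally once one observes that $p^*$ exchanges the generating sets of the effective subcategories on both sides. Everything else is an exercise in naturality combined with the perfect-field result already proved.
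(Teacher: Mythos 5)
Your proposal is correct and follows exactly the strategy of the paper: base change along the perfection morphism $\Spec k^{\mathrm{perf}}\to\Spec k$, use the Elmanto--Khan equivalence $\SH(k)[1/e]\simeq\SH(k^{\mathrm{perf}})[1/e]$, and invoke Theorem~\ref{thm:s_0KGL=HZ} over the perfect field. The paper's proof is a one-line citation to that perfection result; you have simply written out the (correct) bookkeeping of why the relevant objects, maps, and slice functors are compatible with this base change.
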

\begin{proof}
    Let $k^p$ be the perfection of $k$. Then by \cite[Corollary 2.1.7]{elden-adeel-perfection} the statement follows from Theorem~\ref{thm:s_0KGL=HZ} in $\SH(k^p)[1/e]$.
\end{proof}

Our next goal is to generalize Corollary~\ref{cor:s_0KGL-imperfect} to the case when the Azumaya algebra $\Aa$ is not pulled back from the base field. To begin we will show that $H\Z^\Aa$ is a 0-slice in this more general case.

\begin{defn}
We say that a Noetherian scheme $X$ is \emph{Grothendieck} if for every point $x\in X$ the local ring $\Oo_{X,x}$ is a G-ring \cite[\href{https://stacks.math.columbia.edu/tag/07GH}{Tag 07GH}]{stacks}. Every scheme of finite type over a Grothendieck scheme, in particular every scheme of finite type over a field, is again Grothendieck \cite[\href{https://stacks.math.columbia.edu/tag/07PX}{Tag 07PX}]{stacks}.
\end{defn}

\begin{lem}\label{lem:abelian} Let $R$ be a Grothendieck ring, and let $F\colon\Sm_R^{\op} \rightarrow \Sp$ be a Nisnevich sheaf of spectra. Let $\widetilde{F}\colon \mathrm{EssSm}_R^{\op} \rightarrow \Sp$ be its finitary extension to essentially smooth $R$-schemes. Assume that for all $x \in X$ where $X \in \Sm_R$, we have that $\widetilde{F}(\widehat{\Oo_{X,x}}) = 0$, where $\widehat{\Oo_{X,x}}$ is the completion of $\Oo_{X,x}$ at its maximal ideal. Then $F=0$. 
\end{lem}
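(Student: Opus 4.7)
My plan is to use Nisnevich descent to reduce the vanishing of $F$ to its vanishing on henselian local rings, and then use Popescu's smoothing theorem together with Artin approximation to propagate the hypothesis from the completion back to the henselization via a retract-through-a-section argument.

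First, since $F$ is a Nisnevich sheaf of spectra on $\Sm_R$, one has $F = 0$ if and only if the Nisnevich stalks of all its homotopy sheaves vanish. Because $\widetilde{F}$ is finitary, these stalks are computed by $\widetilde{F}$ evaluated on henselizations, so it suffices to show that $\widetilde{F}(\Oo_{X,x}^{h}) = 0$ for every $X \in \Sm_R$ and every point $x \in X$. Fix such $(X,x)$ and set $A = \Oo_{X,x}^{h}$. Since $R$ is Grothendieck, $\Oo_{X,x}$ is a G-ring, and by Greco's theorem its henselization $A$ is again a Noetherian G-ring; in particular $A \to \widehat{A}$ is a regular homomorphism. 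Moreover, $\widehat{A} = \widehat{\Oo_{X,x}}$, so the hypothesis gives $\widetilde{F}(\widehat{A}) = 0$.

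Next I invoke Popescu's smoothing theorem to express the regular map $A \to \widehat{A}$ as a filtered colimit $\widehat{A} = \colim_{i} B_i$ of smooth (and therefore finitely presented) $A$-algebras $B_i$, with $A$-algebra transition maps. Finitariness of $\widetilde{F}$ then yields
\[
0 \;=\; \widetilde{F}(\widehat{A}) \;\simeq\; \colim_{i}\,\widetilde{F}(B_i).
\]
Given any class $\alpha \in \pi_n\widetilde{F}(A)$, denote by $\phi_i\colon A \to B_i$ the structural maps and by $\psi\colon A\to \widehat{A}$ the completion map, so that $\psi^{*}\alpha = 0$ in $\pi_n\widetilde{F}(\widehat{A})$. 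Since the colimit of spectra on the right is filtered, there exists an index $j$ such that $\phi_j^{*}\alpha = 0$ in $\pi_n\widetilde{F}(B_j)$. Now I apply Artin approximation: because $A$ is a henselian Noetherian G-ring and $B_j$ is a finitely presented $A$-algebra admitting an $A$-algebra map $B_j \to \widehat{A}$ (coming from the colimit), there is an $A$-algebra map $\sigma_j\colon B_j \to A$. Any such $A$-algebra map satisfies $\sigma_j \circ \phi_j = \id_A$, hence $\alpha = \sigma_j^{*}\phi_j^{*}\alpha = 0$. This gives $\widetilde{F}(A) = 0$ as required.

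The main obstacle I anticipate is ensuring the ring-theoretic hypotheses are correctly set up: one needs the G-ring property to be inherited by local rings of smooth $R$-schemes and preserved under henselization, so that both Popescu and Artin approximation are genuinely available. Once that bookkeeping is done, the retract produced by Artin approximation is doing all the real work — it turns the colimit vanishing into pointwise vanishing, which combined with Nisnevich descent closes the argument.
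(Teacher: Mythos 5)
Your proof is correct and takes essentially the same route as the paper: reduce via Nisnevich points to vanishing on henselizations, apply Popescu to write the completion as a filtered colimit of smooth algebras, use finitariness to transfer the vanishing hypothesis into the colimit, and then find an $A$-algebra section $\sigma_j\colon B_j \to A$ to split $\phi_j$ and conclude. The only (minor) difference is in how the section is produced: the paper observes that each $B_\alpha$ acquires a $\kappa(\mathfrak{p})$-point from the colimit structure map $B_\alpha \to \widehat{S_\mathfrak{p}} \to \kappa(\mathfrak{p})$ and lifts it to an $S^h_\mathfrak{p}$-section using the elementary henselian lifting property along smooth morphisms (the cited theorem of Gruson), whereas you invoke Artin approximation over the henselian G-ring $A$ directly. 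These are essentially the same mechanism — the general Artin approximation for G-rings is itself proved by Popescu plus henselian lifting — so your appeal to Artin approximation is a slightly heavier black box for a step that only needs existence of a section, not approximation to a prescribed one. Either formulation is fine, and your handling of the filtered colimit step (finding $j$ with $\phi_j^*\alpha = 0$) is accurate.
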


\begin{proof} Since $F$ is, in particular, a Zariski sheaf, it then suffices to prove that for any smooth affine morphism $R \rightarrow S$, we have that $F(S) = 0$. Now, by \cite[\href{https://stacks.math.columbia.edu/tag/07PX}{Tag 07PX}]{stacks}, $S$ is a $G$-ring since it is a finite type extension of $R$. Therefore, for any prime ideal $\mathfrak{p}$ of $S$, the map $S_{\mathfrak{p}} \rightarrow \widehat{S_{\mathfrak{p}}}$ is a regular homomorphism. Popescu's theorem \cite[\href{https://stacks.math.columbia.edu/tag/07GC}{Tag 07GC}]{stacks} then proves that this map is ind-smooth and thus $\widetilde{F}(\widehat{\Oo_{X,x}})$ is well-defined. 

Now, since $F$ is a Nisnevich sheaf, it suffices to prove that $\widetilde{F}(S^h_{\mathfrak{p}}) = 0$. The map $S_{\mathfrak{p}} \rightarrow \widehat{S_{\mathfrak{p}}}$ factors through $S^h_{\mathfrak{p}} \rightarrow \widehat{S_{\mathfrak{p}}}$ by \cite[\href{https://stacks.math.columbia.edu/tag/06LJ}{Tag 06LJ}]{stacks}. By assumption, it then suffices to prove that $\widetilde{F}(S^h_{\mathfrak{p}}) \rightarrow \widetilde{F}(\widehat{S_{\mathfrak{p}}})$ is injective on homotopy groups. But now, since $S_{\mathfrak{p}}$ is by definition a noetherian local $G$-ring, $S^h_{\mathfrak{p}}$ is a $G$-ring by \cite[\href{https://stacks.math.columbia.edu/tag/07QR}{Tag 07QR}]{stacks}. Thus Popescu's theorem applies again to prove that $S^h_{\mathfrak{p}} \rightarrow \widehat{S_{\mathfrak{p}}}$ can be written as a filtered colimit of smooth ring maps $\{f_{\alpha}:S^h_{\mathfrak{p}} \rightarrow B_{\alpha}\}_{\alpha}$. By construction, for each $B_{\alpha}$ we have a morphism $B_{\alpha} \rightarrow \widehat{S_{\mathfrak{p}}} \rightarrow \kappa(\mathfrak{p})$ under $S^h_{\mathfrak{p}}$, i.e., a $\kappa(\mathfrak{p})$-section of the map $S^h_{\mathfrak{p}} \rightarrow B_{\alpha}$. Therefore, by \cite[Th\'eor\`eme~I.8]{Gruson}, we obtain a section of $S^h_{\mathfrak{p}} \rightarrow B_{\alpha}$ extending the section over $\kappa(\mathfrak{p})$. This means that the map $\pi_n\widetilde{F}(S^h_{\mathfrak{p}}) \rightarrow \pi_n\widetilde{F}(B_{\alpha})$ is injective for every $n\in\Z$. We conclude by observing that a filtered colimit of injective morphisms of abelian groups is injective.


\end{proof}

\begin{lem}\label{lemma:HZ-is-a-slice}
    Let $k$ be a field of exponential characteristic $e$, $X$ a regular Grothendieck $k$-scheme, and $\Aa$ an Azumaya algebra over $X$. Then the map 
    \[\Z^\Aa[1/e]\to \Omega^\infty H\Z^\Aa[1/e]\]
    is an equivalence of Nisnevich sheaves. 
    Furthermore, $H\Z^\Aa[1/e]$ is a 0-slice.
\end{lem}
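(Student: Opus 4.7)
The plan is to reduce both statements to the case where the base is the spectrum of a field, where they are covered by Corollary~\ref{cor:s_0KGL-imperfect} together with the motivic recognition principle. I would begin by observing that both assertions are really about Nisnevich sheaves on $\Sm_X$: the first is the vanishing of the fiber of the unit map $\Z^\Aa[1/e] \to \Omega^\infty H\Z^\Aa[1/e]$, while the second (given the first) reduces to the triviality of the first effective cover $f_1 H\Z^\Aa[1/e]$, which is again a Nisnevich-local assertion on its sheaves of homotopy groups. Applying Lemma~\ref{lem:abelian}, I would then reduce both statements to checking them on completions $\widehat{\Oo_{Y,y}}$ of stalks at points $y \in Y$ with $Y \in \Sm_X$.

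By Cohen's structure theorem in the equicharacteristic case, each such completion is isomorphic to a formal power series ring $\kappa(y)[[t_1,\dots,t_n]]$ over the residue field. Combining the $\A^1$-invariance of $\Z^\Aa[1/e]$, which is Zariski-locally constant by Lemma~\ref{lem: Z^A mot space}, with the $\A^1$-invariance of the cohomology theory represented by $H\Z^\Aa[1/e]$, a rigidity-style argument should further reduce the evaluations to $\Spec \kappa(y)$. On the spectrum of a field, the desired equivalence $\Z^\Aa[1/e] \simeq \Omega^\infty H\Z^\Aa[1/e]$ follows from the motivic recognition principle, which is implicit in Corollary~\ref{cor:s_0KGL-imperfect} via descent from the perfection after inverting $e$; and the $0$-slice property follows from Lemma~\ref{lem:birational==0-slice}.(2) applied to the birational grouplike framed motivic space $\Z^\Aa[1/e]$, exactly as in the proof of Theorem~\ref{thm:s_0KGL=HZ}.

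The main obstacle I anticipate is the rigidity reduction from $\widehat{\Oo_{Y,y}}$ to $\kappa(y)$. For the sheaf $\Z^\Aa$ itself this is essentially immediate from its Zariski-local constancy; but for the cohomology theory $H\Z^\Aa[1/e]$ it is subtler, and would require presenting $\widehat{\Oo_{Y,y}}$ as a filtered colimit of smooth $\kappa(y)$-algebras via Popescu's theorem, and then using $\A^1$-invariance (applied to the finitary extension) to successively collapse the formal variables $t_1,\dots,t_n$ before invoking the field-level computation.
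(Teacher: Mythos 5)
Your general skeleton --- reduce to stalks at completions via Lemma~\ref{lem:abelian}, invoke Cohen's structure theorem, reduce to the residue field, apply the field-level result --- is the right one for the first assertion, and you correctly identify where the real difficulty sits. But the mechanism you propose for the reduction from $\widehat{\Oo_{Y,y}} \simeq \kappa(y)[[t_1,\dots,t_n]]$ to $\kappa(y)$ is not the one that works, and it has a genuine gap. Collapsing formal variables ``via $\A^1$-invariance'' is not a valid move: the completion is not an $\A^1$-homotopy equivalence to its residue field, and Popescu only exhibits $\widehat{\Oo_{Y,y}}$ as a filtered colimit of smooth $\kappa(y)$-algebras $B_\alpha$, each of which is no closer to $\kappa(y)$ than $\widehat{\Oo_{Y,y}}$ was. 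What actually saves the day is a Brauer-group rigidity statement you do not mention: by \cite[Corollary~IV.2.13]{Milne:1980}, the restriction $i^*$ along the closed point is an isomorphism on Brauer groups of $\widehat{\Oo_{Y,y}}$, so $\Aa$ is Brauer equivalent to $r^*i^*\Aa$ where $r\colon \Spec\widehat{\Oo_{Y,y}}\to\Spec\kappa(y)$ is the ind-smooth retraction from the Cohen isomorphism. Once one replaces $\Aa$ by $r^*i^*\Aa$, the entire map $\Z^\Aa[1/e]\to\Omega^\infty H\Z^\Aa[1/e]$ over $\widehat{\Oo_{Y,y}}$ is literally the pullback along $r$ of the corresponding map over $\kappa(y)$, and ind-smooth base-change compatibility (\cite[Lemma~A.7]{HoyoisMGL}) finishes it. Without the Brauer-group rigidity step, there is no map over $\kappa(y)$ to pull back from, and the reduction breaks.

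For the $0$-slice assertion, your plan is both unnecessary and shakier than you may realize. Reducing $f_1H\Z^\Aa[1/e]=0$ to completions and then to fields would require knowing that the effective-cover functor commutes with pullback along the (non-smooth!) map to $\widehat{\Oo_{Y,y}}$ --- or more precisely one has to rephrase the vanishing in terms of the internal loop functor $\Omega_\T$ and then still argue ind-smooth compatibility. The paper avoids all of this: once the first claim is established, being a $0$-slice is equivalent to the assertion that for every smooth $X$-scheme $Y$ the restriction map $\Map_{\SH(X)}(\Sigma^\infty(Y\times\P^1)_+, H\Z^\Aa[1/e])\to \Map_{\SH(X)}(\Sigma^\infty Y_+,H\Z^\Aa[1/e])$ is an equivalence, which by the first claim becomes $\Z^\Aa(Y\times\P^1)[1/e]\simeq\Z^\Aa(Y)[1/e]$ --- and this holds directly over $X$ because $\Z^\Aa$ is a birational sheaf (Lemma~\ref{lem: Z^A mot space}). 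Invoking Lemma~\ref{lem:birational==0-slice}.(2) is also not available here, since that result is proved over a perfect field, not over a general regular Grothendieck $k$-scheme.
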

\begin{proof}
    
    By Lemma~\ref{lem:abelian} applied to the fiber and the hypercompleteness of the Nisnevich topos of $X$, it suffices to prove the result after evaluating on $\widehat{\Oo_{X,x}}$ where $x\in X$ is a point of $X$. 
    
    Let now $i \colon \Spec\kappa(x) \hookrightarrow \Spec\widehat{\Oo_{X,x}}$ be the inclusion of the residue field. By the Cohen structure theorem \cite[\href{https://stacks.math.columbia.edu/tag/0C0S}{Tag 0C0S}]{stacks} there is an isomorphism $\widehat{\Oo_{X,x}}\simeq \kappa(x)[\![t_1,\dots,t_n]\!]$ with $n=\dim_x X$. In particular, there is an ind-smooth map $r\colon  \widehat{\Oo_{X,x}} \to \Spec \kappa(x)$ such that $r\circ i$ is the identity. Moreover, since $i^*$ is an equivalence on the Brauer group by \cite[Corollary~IV.2.13]{Milne:1980}, the Azumaya algebras $r^*i^*\Aa$ and $\Aa$ are Brauer equivalent and so $\Z^{r^*i^*\Aa}\simeq\Z^{\Aa}$. Therefore, by \cite[Lemma~A.7]{HoyoisMGL}, the map we want to prove is an equivalence is the pullback along $r$ of the map
    \[\Z^{i^*\Aa}[1/e]\to \Omega^\infty H\Z^{i^*\Aa}[1/e]\]
    in $\SH(\kappa(x))$. But this map is an equivalence by \cite[Remark~3.2.8]{elden-adeel-perfection}.

    Let us now prove that $H\Z^\Aa[1/e]$ is a 0-slice in $\SH(X)$. It suffices to show that for every smooth $X$-scheme $Y$ the map
    \[\Map_{\SH(X)}(\Sigma^\infty (Y\times\P^1)_+, H\Z^\Aa[1/e])\to \Map_{\SH(X)}(\Sigma^\infty Y_+,H\Z^\Aa[1/e])\]
    is an equivalence. As we already proved, this is equivalent to asking whether the map
    \[\Z^\Aa(Y\times \P^1)[1/e]\to \Z^\Aa(Y)[1/e]\]
    is an isomorphism, which is true by Lemma~\ref{lem: Z^A mot space}.
\end{proof}

Under the assumptions of Lemma~\ref{lemma:HZ-is-a-slice} we can compute the slices of $\KGL^\Aa$. This generalizes \cite[Theorem 1]{levine-kahn} as explained in the introduction.
\begin{thm}\label{thm:s_0KGL-general-base}
    Let $k$ be a field of exponential characteristic $e$, $X$ a regular Grothendieck $k$-scheme, and $\Aa$ an Azumaya algebra over $X$. Then for every $n\in\Z$ there is an equivalence
    \[s_n\KGL^\Aa[1/e]\simeq s_n\KGL\otimes_{H\Z}H\Z^{\Aa}[1/e]\simeq\Sigma^n_\T H\Z^{\Aa}[1/e]\,.\]
\end{thm}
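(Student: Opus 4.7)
The plan is to reduce to the case of a field base (handled by Corollary~\ref{cor:s_0KGL-imperfect}) via essentially smooth base change, in the spirit of the proof of Lemma~\ref{lemma:HZ-is-a-slice}.

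By Bott periodicity (Corollary~\ref{cor:P1}) the $\T$-suspension shifts slices, and the identification
\[
s_n\KGL \otimes_{H\Z} H\Z^\Aa[1/e] \simeq \Sigma^n_\T H\Z^\Aa[1/e]
\]
follows formally from the known $s_n\KGL \simeq \Sigma^n_\T H\Z$; thus it suffices to produce a natural equivalence $s_0\kgl^\Aa[1/e] \simeq H\Z^\Aa[1/e]$. The $\Aa$-twisted rank map induces a morphism $\kgl^\Aa[1/e]\to H\Z^\Aa[1/e]$; since the target is already a $0$-slice by Lemma~\ref{lemma:HZ-is-a-slice}, this morphism factors canonically through $s_0\kgl^\Aa[1/e]$, yielding a map $\varphi\colon s_0\kgl^\Aa[1/e]\to H\Z^\Aa[1/e]$ whose cofibre $C\in\SH(X)$ I aim to show is zero.

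By Theorem~\ref{thm:pull}, Proposition~\ref{prop:pull_Z^A}, and the compatibility of the slice functor with essentially smooth base change \cite[Remark~4.20]{HoyoisMGL}, the formation of $\varphi$ and hence of $C$ commutes with essentially smooth pullback between regular Grothendieck $k$-schemes. Applying Lemma~\ref{lem:abelian} to the homotopy sheaves of $C$ on $\Sm_X$ (after passing to affine Nisnevich charts of $X$ if necessary), the vanishing $C=0$ reduces to checking that $C|_{\Spec\widehat{\Oo_{Y,y}}}=0$ for every smooth $X$-scheme $Y$ and point $y\in Y$.

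At each completion, the Cohen structure theorem furnishes an ind-smooth structure map $r\colon\Spec\widehat{\Oo_{Y,y}}\to\Spec\kappa(y)$ admitting the residue-field inclusion $i$ as a section, and by \cite[Corollary~IV.2.13]{Milne:1980} the Azumaya algebras $\Aa$ and $r^*i^*\Aa$ are Brauer equivalent on $\widehat{\Oo_{Y,y}}$. Morita invariance of twisted K-theory and twisted motivic cohomology, together with base change along the ind-smooth $r$, identifies $\varphi|_{\widehat{\Oo_{Y,y}}}$ with $r^*$ applied to the analogous comparison map over the field $\Spec\kappa(y)$, which is an equivalence by Corollary~\ref{cor:s_0KGL-imperfect}. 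This completes the argument.

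The main obstacle is coordinating the slice functor, the Morita equivalence, and the ind-smooth retraction $r$ from Cohen's theorem: the first two compatibilities rest on \cite[Remark~4.20]{HoyoisMGL}, while the Grothendieck hypothesis on $X$ is precisely what makes $r$ ind-smooth via Popescu's theorem.
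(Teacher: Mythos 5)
Your overall strategy matches the paper's: show $H\Z^\Aa[1/e]$ is a $0$-slice via Lemma~\ref{lemma:HZ-is-a-slice}, then reduce to the field case treated in Corollary~\ref{cor:s_0KGL-imperfect} by passing to completions of local rings, where Cohen's structure theorem together with Popescu and \cite[Corollary~IV.2.13]{Milne:1980} lets you pull the whole comparison back from a residue field along an ind-smooth map.

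However, there is a real gap at the very first step: you write that ``the $\Aa$-twisted rank map induces a morphism $\kgl^\Aa[1/e]\to H\Z^\Aa[1/e]$'' and proceed immediately to factor it through $s_0\kgl^\Aa[1/e]$. This morphism does not exist for free. The rank map $\rk^\Aa\colon\Vect^\Aa\to\Z^\Aa$ is a map of framed motivic spaces, so applying $\Sigma^\infty_\fr$ yields $\Sigma^\infty_\fr\Vect^\Aa\to H\Z^\Aa$. But $\kgl^\Aa$ is \emph{defined} as the effective cover $f_0\KGL^\Aa$ and is not a priori of the form $\Sigma^\infty_\fr(\textrm{--})$; the canonical comparison map goes in the direction $\Sigma^\infty_\fr\Vect^\Aa\to\kgl^\Aa$ (by adjunction from $\Vect^\Aa\to K^\Aa\simeq\Omega^\infty_\fr\KGL^\Aa$), not the other way. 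Over a perfect field this map is an equivalence by the motivic recognition principle once one knows $\kgl^\Aa$ is very effective (Lemma~\ref{lem:kgl-is-very-effective}), but over a general regular Grothendieck $k$-scheme this must be proved; it is precisely the first paragraph of the paper's proof, and it is established Nisnevich-locally by the same Cohen--Popescu reduction to fields that you only invoke later for the cofiber $C$. Without that preliminary identification $\Sigma^\infty_\fr\Vect^\Aa[1/e]\simeq\kgl^\Aa[1/e]$, your map $\varphi$ is not defined and the rest of the argument cannot start. Once that step is supplied, the remainder of your proof (base-change stability of $\varphi$, application of Lemma~\ref{lem:abelian} to the homotopy sheaves of $C$, and reduction to $\kappa(y)$) is correct and matches the paper.
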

\begin{proof}
Consider the canonical map of framed spaces $\Vect^\Aa \to K^\Aa$. By adjunction, it induces a map of motivic spectra $\Sigma^\infty_\fr\Vect^\Aa{}[1/e] \to \kgl^\Aa{}[1/e]$, which can be checked to be an equivalence Nisnevich-locally. Over the complete Noetherian local schemes $\Spec \widehat{\Oo_{X,x}}$ we argue as in the proof of Lemma~\ref{lemma:HZ-is-a-slice}. Indeed, our map is a pullback along an ind-smooth retraction $r\colon  \Spec \widehat{\Oo_{X,x}} \to \Spec \kappa(x)$ of the map
\[\Sigma^\infty_\fr\Vect^{i^* \Aa}[1/e] \to \kgl^{i^* \Aa}[1/e]\]
    in $\SH(\kappa(x))$, which is an equivalence by the motivic recognition principle~\cite[Theorem~3.5.14]{EHKSY1}, since $\kgl^{i^*\Aa}$ is very effective by Lemma~\ref{lem:kgl-is-very-effective}.
    
    We argue that the induced map 
    \[\kgl^\Aa[1/e] \simeq \Sigma^\infty_\fr\Vect^\Aa[1/e] \xrightarrow{\Sigma^\infty_\fr \rk^\Aa} H\Z^\Aa[1/e] \]
    is equivalent to taking the 0-th slice. By Lemma~\ref{lemma:HZ-is-a-slice} we know that $H\Z^\Aa[1/e]$ is a 0-slice. It remains to show that the fiber of $\Sigma^\infty_\fr \rk^\Aa$ is 1-effective after inverting $e$. Arguing as before we can reduce to the case of fields (by checking on the complete local rings of $X$, where it is pulled back from a field along an ind-smooth map). There it holds by Corollary~\ref{cor:s_0KGL-imperfect}.
\end{proof}

Let $k$ be a field and $X$ be a smooth $k$-scheme. Recall that the \emph{$n$-th Bloch cycle complex} is the simplicial abelian group $z^n(X)$ which in degree $d$ is given by 
\[\bigoplus_{Z\subseteq X\times \Delta^d} \Z\cdot{} [Z]\]
where the sum is over all the irreducible closed subsets of codimension $d$ of $X\times \Delta^d$ that intersect all faces of $X\times \Delta^d$ properly. The face maps in the simplicial abelian group are given by intersections with the faces of $X\times \Delta^d$.  By \cite[Theorem~6.4.2]{Levine:2008}, there is a homotopy equivalence of the mapping spectrum $\map_{\SH(k)} (\Sigma^\infty X_+,\Sigma^n_\T H\Z)$ with (the $H\Z$-module spectrum represented by) $z^n(X)$.

\begin{defn}
Let $\Aa$ be an Azumaya algebra over $X$. The \emph{$\Aa$-twisted Bloch cycle complex} is the subcomplex $z^n_\Aa(X)\subseteq z^n(X)$ of those cycles $\sum_Z n_Z \cdot  [Z]$ such that $n_Z\in \Z^\Aa(Z)\subseteq\Z$.
\end{defn}

\begin{prop}\label{prop:cycle-model-of-Z^A}
    Let $k$ be a perfect field and $\Aa$ an Azumaya algebra over $k$. Then for every smooth $k$-scheme $X$ the map of mapping spectra
    \[\map_{\SH(k)}(\Sigma^\infty_\T X_+,\Sigma^n_\T H\Z^\Aa)\to \map_{\SH(k)}(\Sigma^\infty X_+,\Sigma^n_\T H\Z)\]
    identifies the left hand side with $z^n_\Aa(X)\subseteq z^n(X)$.
\end{prop}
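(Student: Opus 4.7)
The plan is to extend Levine's identification $\map_{\SH(k)}(\Sigma^\infty X_+, \Sigma^n_\T H\Z) \simeq z^n(X)$ from \cite[Theorem~6.4.2]{Levine:2008} to the twisted sheaf $\Z^\Aa$, and then check that the map induced by $\Z^\Aa \hookrightarrow \Z$ realizes the evident inclusion $z^n_\Aa(X) \hookrightarrow z^n(X)$.

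First, I would record that, by Lemmas~\ref{lem: Z^A mot space} and~\ref{lem: Z^A transfers}, $\Z^\Aa$ is an $\A^1$-invariant Nisnevich sheaf with Voevodsky transfers. Using the comparison between framed and Voevodsky transfers (item~(3) preceding Definition~\ref{def:twisted-mot}), the spectrum $H\Z^\Aa = \Sigma^\infty_\fr \Z^\Aa$ agrees with the motivic Eilenberg--MacLane spectrum attached to $\Z^\Aa$ viewed as a sheaf with transfers, and the canonical map $H\Z^\Aa \to H\Z$ is the one induced by the sheaf inclusion $\Z^\Aa \hookrightarrow \Z$.

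Next, I would extend Levine's homotopy coniveau identification to general coefficients. For any $\A^1$-invariant Nisnevich sheaf with transfers $F$ on $\Sm_k$, a nearly verbatim adaptation of the argument of \cite[Theorem~6.4.2]{Levine:2008} identifies $\map_{\SH(k)}(\Sigma^\infty X_+, \Sigma^n_\T HF)$ with the complex whose $d$-th term is $\bigoplus_Z F(\kappa(Z))$, where $Z$ runs over codimension-$n$ irreducible closed subsets of $X \times \Delta^d$ meeting all faces properly, and the differentials are those of Bloch's complex composed with the transfers on $F$ along the relevant generic-point inclusions. Specializing to $F = \Z^\Aa$ and using birationality of $\Z^\Aa$ (Lemma~\ref{lem: Z^A mot space}), one has $\Z^\Aa(\kappa(Z)) = \Z^\Aa(Z) \subseteq \Z$, so the resulting complex is exactly $z^n_\Aa(X)$ by definition. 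Naturality of the comparison in $F$ then identifies the induced map with the evident inclusion $z^n_\Aa(X) \hookrightarrow z^n(X)$.

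The main obstacle will be the extension of Levine's coniveau identification from $\Z$ to the sheaf with transfers $\Z^\Aa$, since the classical proof exploits cycle-theoretic properties specific to $\Z$-coefficients, including moving lemmas and purity of the motive. However, because $\Z^\Aa \subseteq \Z$ is a birational subsheaf that is stable under the face restrictions and specialization maps appearing in the tower (being cut out from $\Z$ pointwise at each generic point by the index condition), the twisted complex is a genuine subcomplex of $z^n(X)$, and Levine's argument restricts to it stage by stage; this reduces the verification to the classical case with the coefficient inclusions tracked at each generic point.
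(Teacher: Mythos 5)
Your proposal and the paper's proof share the broad idea of appealing to Levine's homotopy coniveau machinery, but the routes are genuinely different and yours has a gap.

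The paper's argument is: by Theorem~\ref{thm:s_0KGL=HZ}, $\Sigma^n_\T H\Z^\Aa$ is its own $n$-th slice. Once that is known, the identification of the layers of the homotopy coniveau tower with cycle complexes labelled by the zeroth slice sheaf is a \emph{general} theorem of Levine --- \cite[Corollary~5.3.2]{Levine:2008} together with \cite[Theorem~9.0.3]{Levine:2008}. Applying that general result with $E=\Sigma^n_\T H\Z^\Aa$ and $p=n$ gives exactly the twisted cycle complex with coefficients $(s_0 H\Z^\Aa)(Z)=\Z^\Aa(Z)$, and naturality in $E$ produces the inclusion into $z^n(X)$. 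All the moving-lemma content is hidden inside Levine's already-proved general theorems; none of it needs to be redone for the twisted coefficients.

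Your route instead tries to rerun the proof of \cite[Theorem~6.4.2]{Levine:2008} with coefficients in an arbitrary $\A^1$-invariant Nisnevich sheaf with transfers $F$, and then specialize to $F=\Z^\Aa$. This is where the gap is. First, the statement you propose for general $F$ is not correct as stated: for a general $F$ the spectrum $\Sigma^n_\T HF$ need not be an $n$-th slice, and the identification of $\map_{\SH(k)}(\Sigma^\infty X_+,\Sigma^n_\T HF)$ with a cycle complex requires at minimum that $HF$ be a $0$-slice; this is exactly what the birationality of $\Z^\Aa$ plus Proposition~\ref{lem:birational==0-slice} (equivalently Theorem~\ref{thm:s_0KGL=HZ}) provides, but you only invoke birationality to evaluate $\Z^\Aa$ at generic points, not to establish the slice condition. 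Second, the assertion that ``Levine's argument restricts to $z^n_\Aa(X)$ stage by stage'' is exactly the step that needs justification: Levine's proof uses moving lemmas to replace cycles by rationally equivalent cycles in good position, and a moved cycle $Z'$ need not have the same function field as $Z$, so the index of $\Aa$ (hence the allowable coefficient) can change; there is no \emph{a priori} reason a moving homotopy for a twisted cycle stays inside the twisted subcomplex. This is precisely the difficulty that forced Kahn and Levine to prove a twisted localization lemma \cite[Lemma~6.3.1]{levine-kahn}, and one of the stated points of the paper is to avoid it (see \S~``Comparison with other works''). Your last paragraph names the obstacle but does not resolve it; the paper's proof sidesteps it entirely by working at the level of slices and citing Levine's general tower comparison rather than his $H\Z$-specific computation.

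So: your overall framing is reasonable, but you should replace the claimed ``verbatim adaptation'' with the two-step argument the paper uses --- first show $\Sigma^n_\T H\Z^\Aa$ is a slice (this is Theorem~\ref{thm:s_0KGL=HZ}, and is where the hypothesis that $\Aa$ is pulled back from a perfect field enters), then cite Levine's general Corollary~5.3.2 and Theorem~9.0.3. Without that, the crucial moving-lemma compatibility is left unaddressed.
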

\begin{proof}
    By Theorem~\ref{thm:s_0KGL=HZ} it follows that $s_n(\Sigma^nH\Z^\Aa)\simeq \Sigma^n H\Z^\Aa$. Therefore, from \cite[Corollary~5.3.2 and Theorem~9.0.3]{Levine:2008} applied with $p=n$ and $E=\Sigma^n_\T H\Z^\Aa$, it follows that the map we want to study is induced by a map of simplicial spectra
    \[\bigoplus_{Z\subseteq X\times\Delta^\bullet} (s_0H\Z^\Aa)(Z)\cdot [Z]\to \bigoplus_{Z\subseteq X\times\Delta^\bullet} (s_0H\Z)(Z)\cdot [Z]\]
    where the sums are indexed by the closed subspaces of $X\times\Delta^\bullet$ of codimension $n$ intersection all faces properly. But since $s_0H\Z^\Aa\simeq H\Z^\Aa$ and the right hand side is canonically identified with Bloch cycle complex, this proves the thesis.
\end{proof}

\begin{rem}\label{rem:nis-local}
    The authors know how to prove Proposition~\ref{prop:cycle-model-of-Z^A} only when $\Aa$ is pulled back from a perfect field. However the argument in the proof of Lemma~\ref{lemma:HZ-is-a-slice} shows that, up to inverting the exponential characteristic, this is true Nisnevich locally whenever $X$ is smooth over a field. Therefore the identification of $\Omega^\infty\Sigma^n_\T H\Z^\Aa$ with $z^n_\Aa(X)$ holds Nisnevich locally in this generality. It seems therefore reasonable to conjecture that this equivalence is true globally.
\end{rem}

\begin{defn}
The $(i, n)$-th \emph{$\Aa$-twisted motivic cohomology group} $H^i_{\mot}(X;\Z^\Aa(n))$ is defined to be $\pi_{2n-i}\map_{\SH(X)}(\Sigma^\infty_\T X_+,\Sigma^n_\T H\Z^\Aa)$.
When $\Aa$ is an Azumaya algebra over a perfect field $k$, the group $H^i_{\mot}(X;\Z^\Aa(n))$ is isomorphic to the $(i-2n)$-th cohomology group of the $\Aa$-twisted Bloch cycle complex $z^n_\Aa(X)$ by Proposition~\ref{prop:cycle-model-of-Z^A}, and therefore to the twisted Chow groups of \cite[Definition~5.6.3]{levine-kahn}.
\end{defn}

By considering the slice spectral sequence for $\KGL^\Aa$ we obtain the following corollary, which recovers the spectral sequence of \cite[Corollary 6.1.4]{levine-kahn} when $\Aa$ is pulled back from the base field.

\begin{cor}\label{cor:motivic-ss}\label{cor:main}
    Let $k$ be a field of exponential characteristic $e$, $X$ a regular Grothendieck $k$-scheme, and $\Aa$ an Azumaya algebra over $X$. Then there is a strongly convergent spectral sequence
    \[E^{p,q}_2=H^{p-q}_{\mot}(X;\Z^\Aa[1/e](-q))\Rightarrow K_{-p-q}^\Aa(X)[1/e]\,.\]
    Moreover if the field $k$ is perfect and $\Aa$ is pulled back from $k$, then one can avoid inverting $e$, so that the spectral sequence has the  shape
    \[E^{p,q}_2=H^{p-q}_{\mot}(X;\Z^\Aa(-q))\Rightarrow K_{-p-q}^\Aa(X)\,.\]
\end{cor}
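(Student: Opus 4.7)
The strategy is to obtain the spectral sequence as the slice spectral sequence of $\KGL^\Aa[1/e]$ mapped out of $\Sigma^\infty X_+$, and then to identify the input and output using the results established earlier in the paper.

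First, I would apply the slice tower $\cdots \to f_{q+1}\KGL^\Aa[1/e] \to f_q\KGL^\Aa[1/e]\to \cdots \to \KGL^\Aa[1/e]$ and extract the associated spectral sequence after applying $\pi_*\map_{\SH(X)}(\Sigma^\infty X_+, -)$. Its $E_1$-page is built from the homotopy groups of the slices $s_q\KGL^\Aa[1/e]$, and by Theorem~\ref{thm:s_0KGL-general-base} we have
\[ s_q\KGL^\Aa[1/e] \simeq \Sigma^q_\T H\Z^\Aa[1/e] \qquad \forall q\in\Z.\]
This immediately reindexes the $E_2$-page to
\[ E^{p,q}_2 = \pi_{-p-q}\map_{\SH(X)}(\Sigma^\infty X_+, \Sigma^{-q}_\T H\Z^\Aa[1/e]) = H^{p-q}_{\mot}(X; \Z^\Aa[1/e](-q)), \]
by the very definition of twisted motivic cohomology.

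Second, I would identify the abutment with twisted $K$-theory. By Proposition~\ref{prop:KH-represents}, mapping out of $\Sigma^\infty X_+$ into $\KGL^\Aa$ computes $KH^\Aa(X)$. To replace $KH^\Aa$ by $K^\Aa$ in the statement, I would invoke the $\A^1$-invariance of $K^\Aa$ on regular Noetherian schemes of finite Krull dimension (Proposition~\ref{thm:Nis-descent and A1-invariance}) together with the Bass-fundamental-theorem-type argument showing that for such regular schemes, the natural map $K^\Aa(X)\to KH^\Aa(X)$ is an equivalence; this goes through verbatim in the twisted setting because $\Perf^\Aa(X\times\A^n)\simeq \Perf^\Aa(X)\otimes_{\Perf(X)}\Perf(X\times\A^n)$ and the untwisted statement holds. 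This yields the stated $E_2\Rightarrow K_{-p-q}^\Aa(X)[1/e]$.

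The main obstacle will be strong convergence of the resulting spectral sequence. Here I would use that $X$ is regular of finite Krull dimension over a field, so that the slice filtration on $\KGL^\Aa[1/e]$ is, after applying $\map(\Sigma^\infty X_+, -)$, bounded below in each degree by standard finite-dimensionality arguments (each slice $\Sigma^q_\T H\Z^\Aa[1/e]$ contributes in a bounded range of bidegrees as $q$ grows, using that motivic cohomology vanishes above the Krull dimension in a suitable range). Combined with the fact that $\KGL^\Aa[1/e]$ lies in the $\infty$-effective part of the slice filtration (equivalently, the limit of $f_q\KGL^\Aa[1/e]$ over $q\to +\infty$ is trivial when mapped out of any smooth $X$), one obtains a strongly convergent spectral sequence in the standard sense.

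Finally, for the second assertion of the corollary, when $k$ is perfect and $\Aa$ is pulled back from $k$, I would run the same argument but invoke Theorem~\ref{thm:s_0KGL=HZ} in place of Theorem~\ref{thm:s_0KGL-general-base}, which computes the slices integrally (without inverting $e$). All other steps remain unchanged, and the resulting spectral sequence is the integral version stated.
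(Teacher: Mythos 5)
Your proposal is correct and follows essentially the same route as the paper: take the slice spectral sequence for $\KGL^\Aa$, identify the $E_2$-page via Theorem~\ref{thm:s_0KGL-general-base} (resp.\ Theorem~\ref{thm:s_0KGL=HZ} in the perfect/pulled-back case), and identify the abutment via Proposition~\ref{prop:KH-represents} and $\A^1$-invariance of $K^\Aa$ on regular Noetherian schemes from Proposition~\ref{thm:Nis-descent and A1-invariance}. The one place where the paper is tighter is the convergence step: rather than arguing via boundedness of motivic cohomology contributions as $q$ grows (which conflates two distinct inputs, namely completeness of the slice filtration and degreewise vanishing of the $E_1$-page), the paper derives strong convergence directly from the fact that $\kgl^\Aa$ is \emph{very effective} (Lemma~\ref{lem:kgl-is-very-effective}), following \cite[Remark~6.1.2]{levine-kahn}. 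Very effectivity gives the growing connectivity of $f_q\KGL^\Aa$ needed for conditional convergence, and finite Krull dimension then upgrades to strong convergence; your Krull-dimension argument captures the second half but does not on its own secure the first, so you should explicitly cite very effectivity there to close the gap.
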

\begin{proof}
    The slice spectral sequence for $\KGL^\Aa$ converges to $K^\Aa(X)$ by Proposition~\ref{prop:KH-represents} and Proposition~\ref{thm:Nis-descent and A1-invariance}. Moreover its $E_2$-page can be identified with motivic cohomology by Theorem~\ref{thm:s_0KGL=HZ} when $\Aa$ is pulled back from a perfect field and by Proposition~\ref{thm:s_0KGL-general-base} after inverting $e$.
    
    Finally, the strong convergence of the spectral sequence follows as in \cite[Remark~6.1.2]{levine-kahn} from the fact that $\kgl^\Aa$ is very effective, which was proved in Lemma~\ref{lem:kgl-is-very-effective}.
\end{proof}

\bibliographystyle{alphamod}
\let\mathbb=\mathbf
{\small
\bibliography{twisted}

\newcommand{\etalchar}[1]{$^{#1}$}
\providecommand{\bysame}{\leavevmode\hbox to3em{\hrulefill}\thinspace}
\providecommand{\MR}{\relax\ifhmode\unskip\space\fi MR }
\providecommand{\MRhref}[2]{%
  \href{http://www.ams.org/mathscinet-getitem?mr=#1}{#2}
}
\providecommand{\href}[2]{#2}
\begin{thebibliography}{EHK{\etalchar{+}}21}
\providecommand{\url}[1]{\href{#1}{{\def~{\textasciitilde}\tt #1}}}

\bibitem[AGP21]{agp}
A.~Ananyevskiy, G.~Garkusha, and I.~Panin, \emph{Cancellation theorem for
  framed motives of algebraic varieties}, Advances in Mathematics \textbf{383}
  (2021), p.~107681,
  \url{https://www.sciencedirect.com/science/article/pii/S0001870821001195}

\bibitem[AW14]{ben-ben}
B.~Antieau and B.~Williams, \emph{Unramified division algebras do not always
  contain {A}zumaya maximal orders}, Invent. Math. \textbf{197} (2014), no.~1,
  pp.~47--56,  \url{https://doi.org/10.1007/s00222-013-0479-7}

\bibitem[Bac20]{tom-fflat-cancellation}
T.~Bachmann, \emph{Cancellation theorem for motivic spaces with finite flat
  transfers}, To be published in \emph{Documenta Math.}, 2020,
  \href{http://arxiv.org/abs/2006.14934v2}{{\sf arXiv:2006.14934v2}}

\bibitem[Bae16]{baek}
S.~Baek, \emph{Semiorthogonal decompositions for twisted grassmannians}, Proc.
  Amer. Math. Soc. \textbf{144} (2016), no.~1, pp.~1--5,
  \url{https://doi.org/10.1090/proc/12882}

\bibitem[BE21]{tom-elden-slices}
T.~Bachmann and E.~Elmanto, \emph{Voevodsky's slice conjectures via Hilbert
  schemes}, Algebraic Geometry \textbf{8} (2021), pp.~626--636, preprint
  \href{http://arxiv.org/abs/1912.01595}{{\sf arXiv:1912.01595}}

\bibitem[BEH{\etalchar{+}}21]{EHKSY4}
T.~Bachmann, E.~Elmanto, M.~Hoyois, A.~A. Khan, V.~Sosnilo, and M.~Yakerson,
  \emph{{On the infinite loop spaces of algebraic cobordism and the motivic
  sphere}}, {Épijournal de Géométrie Algébrique} \textbf{{Volume 5}}
  (2021), preprint \href{http://arxiv.org/abs/arXiv:1911.02262}{{\sf
  arXiv:1911.02262}}

\bibitem[Ber09]{bernardara}
M.~Bernardara, \emph{A semiorthogonal decomposition for Brauer--Severi
  schemes}, Mathematische Nachrichten \textbf{282} (2009), no.~10,
  pp.~1406--1413

\bibitem[BH21]{norms}
T.~Bachmann and M.~Hoyois, \emph{Norms in motivic homotopy theory}, Astérisque
  \textbf{425} (2021), preprint \href{http://arxiv.org/abs/1711.03061v4}{{\sf
  arXiv:1711.03061v4}}

\bibitem[Cal00]{caldararu-thesis}
A.~H. Caldararu, \emph{Derived categories of twisted sheaves on {C}alabi-{Y}au
  manifolds}, ProQuest LLC, Ann Arbor, MI, 2000, Thesis (Ph.D.)--Cornell
  University

\bibitem[Cis13]{Cisinski}
D.-C. Cisinski, \emph{Descente par {\'e}clatements en $K$-th{\'e}orie
  invariante par homotopie}, Ann. Math. \textbf{177} (2013), no.~2,
  pp.~425--448

\bibitem[DeM69]{demeyer}
F.~R. DeMeyer, \emph{Projective modules over central separable algebras},
  Canadian J. Math. \textbf{21} (1969), pp.~39--43,
  \url{https://doi.org/10.4153/CJM-1969-004-x}

\bibitem[DMI06]{deMeyer-Ingraham}
F.~De~Meyer and E.~Ingraham, \emph{Separable algebras over commutative rings},
  vol. 181, Springer, 2006

\bibitem[EHIK21]{EHIK}
E.~Elmanto, M.~Hoyois, R.~Iwasa, and S.~Kelly, \emph{Milnor excision for
  motivic spectra}, Journal für die reine und angewandte Mathematik (Crelles
  Journal) \textbf{2021} (2021), no.~779, pp.~223--235, preprint
  \href{http://arxiv.org/abs/arXiv:2004.12098}{{\sf arXiv:2004.12098}}

\bibitem[EHK{\etalchar{+}}20]{EHKSY3}
E.~Elmanto, M.~Hoyois, A.~A. Khan, V.~Sosnilo, and M.~Yakerson, \emph{Modules
  over algebraic cobordism}, Forum of Mathematics, Pi \textbf{8} (2020),
  p.~e14, preprint \href{http://arxiv.org/abs/arXiv:1908.02162}{{\sf
  arXiv:1908.02162}}

\bibitem[EHK{\etalchar{+}}21]{EHKSY1}
\bysame, \emph{Motivic infinite loop spaces}, Cambridge Journal of Mathematics
  \textbf{9} (2021), pp.~431--549, preprint
  \href{http://arxiv.org/abs/1711.05248v5}{{\sf arXiv:1711.05248v5}}

\bibitem[EK20]{elden-adeel-perfection}
E.~Elmanto and A.~A. Khan, \emph{Perfection in motivic homotopy theory},
  Proceedings of the London Mathematical Society \textbf{120} (2020), no.~1,
  pp.~28--38, preprint \href{http://arxiv.org/abs/1812.07506}{{\sf
  arXiv:1812.07506}}

\bibitem[FSV00]{FSV-intro}
E.~M. Friedlander, A.~Suslin, and V.~Voevodsky, \emph{Introduction}, pp.~3--9,
  \url{https://doi.org/10.1017/s0269889700003288}

\bibitem[GP18]{garkusha2014framed}
G.~Garkusha and I.~Panin, \emph{Framed motives of algebraic varieties (after V.
  Voevodsky)}, J. Amer. Math. Soc. \textbf{34} (2018), pp.~261--313, preprint
  \href{http://arxiv.org/abs/1409.4372v4}{{\sf arXiv:1409.4372v4}}

\bibitem[Gru72]{Gruson}
L.~Gruson, \emph{Une propri{\'e}t{\'e} des couples hens{\'e}liens}, Colloque
  d'alg{\`e}bre commutative, exp.~n\textsuperscript{o}~10, Publications des
  s{\'e}minaires de math{\'e}matiques et informatique de Rennes, 1972

\bibitem[HJN{\etalchar{+}}20]{hjnty}
M.~Hoyois, J.~Jelisiejew, D.~Nardin, B.~Totaro, and M.~Yakerson, \emph{The
  Hilbert scheme of infinite affine space and algebraic K-theory}, 2020,
  \href{http://arxiv.org/abs/arXiv:2002.11439v2}{{\sf arXiv:2002.11439v2}}

\bibitem[Hoy15]{HoyoisMGL}
M.~Hoyois, \emph{From algebraic cobordism to motivic cohomology}, J. Reine
  Angew. Math. \textbf{702} (2015), pp.~173--226

\bibitem[Hoy21]{framed-loc}
\bysame, \emph{The localization theorem for framed motivic spaces}, Compositio
  Mathematica \textbf{157} (2021), no.~1, p.~1–11, preprint
  \href{http://arxiv.org/abs/1807.04253v2}{{\sf arXiv:1807.04253v2}}

\bibitem[Huy16]{huybrechts-lectures}
D.~Huybrechts, \emph{Lectures on {K}3 surfaces}, Cambridge Studies in Advanced
  Mathematics, vol. 158, Cambridge University Press, Cambridge, 2016,
  \url{https://doi.org/10.1017/CBO9781316594193}

\bibitem[Huy18]{h-2}
D.~Huybrechts, \emph{Motives of derived equivalent {K}3 surfaces}, Abh. Math.
  Semin. Univ. Hambg. \textbf{88} (2018), no.~1, pp.~201--207,
  \url{https://doi.org/10.1007/s12188-017-0182-5}

\bibitem[Huy19]{h-1}
D.~Huybrechts, \emph{Motives of isogenous {K}3 surfaces}, Comment. Math. Helv.
  \textbf{94} (2019), no.~3, pp.~445--458,
  \url{https://doi.org/10.4171/CMH/465}

\bibitem[KL10]{levine-kahn}
B.~Kahn and M.~Levine, \emph{Motives of {A}zumaya algebras}, J. Inst. Math.
  Jussieu \textbf{9} (2010), no.~3, pp.~481--599,
  \url{https://doi.org/10.1017/S1474748010000022}

\bibitem[Lev06]{LevineChow}
M.~Levine, \emph{Chow's moving lemma and the homotopy coniveau tower}, K-theory
  \textbf{36} (2006), no.~1--2, pp.~129--209

\bibitem[Lev08]{Levine:2008}
\bysame, \emph{The homotopy coniveau tower}, J. Topol. \textbf{1} (2008),
  pp.~217--267, preprint \href{http://arxiv.org/abs/math/0510334}{{\sf
  arXiv:math/0510334}}

\bibitem[Lie04]{lieblich-thesis}
M.~Lieblich, \emph{Moduli of twisted sheaves and generalized {A}zumaya
  algebras}, ProQuest LLC, Ann Arbor, MI, 2004, Thesis (Ph.D.)--Massachusetts
  Institute of Technology

\bibitem[Lie13]{lieblich-ubiquity}
\bysame, \emph{On the ubiquity of twisted sheaves}, pp.~205--227,
  \url{https://doi.org/10.1007/978-1-4614-6482-2_10}

\bibitem[LSW89]{levine-srinivas-weyman}
M.~Levine, V.~Srinivas, and J.~Weyman, \emph{{$K$}-theory of twisted
  {G}rassmannians}, $K$-Theory \textbf{3} (1989), no.~2, pp.~99--121,
  \url{https://doi.org/10.1007/BF00533374}

\bibitem[LT19]{land-tamme}
M.~Land and G.~Tamme, \emph{On the {$K$}-theory of pullbacks}, Ann. of Math.
  (2) \textbf{190} (2019), no.~3, pp.~877--930,
  \url{https://doi.org/10.4007/annals.2019.190.3.4}

\bibitem[Lur17]{HA}
J.~Lurie, \emph{Higher Algebra}, September 2017,
  \url{http://www.math.harvard.edu/~lurie/papers/HA.pdf}

\bibitem[Mil80]{Milne:1980}
J.~S. Milne, \emph{{\'E}tale Cohomology}, Princeton University Press, 1980

\bibitem[MS76]{mcduff-segal}
D.~McDuff and G.~Segal, \emph{Homology fibrations and the ``group-completion''
  theorem}, Invent. Math. \textbf{31} (1975/76), no.~3, pp.~279--284,
  \url{https://doi.org/10.1007/BF01403148}

\bibitem[MV99]{MV}
F.~Morel and V.~Voevodsky, \emph{$\mathbb{A}^1$-homotopy theory of schemes},
  Publ. Math. I.H.{\'E}.S. \textbf{90} (1999), pp.~45--143, preprint
  \href{http://www.math.uiuc.edu/K-theory/0305}{{\sf K-theory:0305}}

\bibitem[Nik17]{Nikolaus}
T.~Nikolaus, \emph{The group completion theorem via localizations of ring
  spectra}, 2017,
  \url{https://www.uni-muenster.de/IVV5WS/WebHop/user/nikolaus/papers.html}

\bibitem[Pan89]{panin}
I.~A. Panin, \emph{Algebraic {$K$}-theory of {G}rassmannian manifolds and their
  twisted forms}, Funktsional. Anal. i Prilozhen. \textbf{23} (1989), no.~2,
  pp.~71--72,  \url{https://doi.org/10.1007/BF01078789}

\bibitem[Qui73]{Quillen:1973}
D.~Quillen, \emph{Higher algebraic K-theory: I}, Higher K-theories, Springer,
  1973, pp.~85--147

\bibitem[Rez14]{rezk-hate}
C.~Rezk, \emph{When are homotopy colimits compatible with homotopy base
  change?}, unpublished, 2014,
  \url{https://faculty.math.illinois.edu/~rezk/i-hate-the-pi-star-kan-condition.pdf}

\bibitem[Rob15]{Robalo}
M.~Robalo, \emph{$K$-theory and the bridge from motives to noncommutative
  motives}, Adv. Math. \textbf{269} (2015), pp.~399--550

\bibitem[Ser94]{Serre-Cohomologie-Galoisienne}
J.-P. Serre, \emph{Cohomologie galoisienne}, vol.~5, Springer Science \&
  Business Media, 1994

\bibitem[S{\O}12]{spitzweck-ostvaer}
M.~Spitzweck and P.~A. {{\O}stv{\ae}r}, \emph{Motivic twisted {$K$}-theory},
  Algebr. Geom. Topol. \textbf{12} (2012), no.~1, pp.~565--599,
  \url{https://doi.org/10.2140/agt.2012.12.565}

\bibitem[Spi18]{SpitzweckHZ}
M.~Spitzweck, \emph{A commutative $\mathbb{P}^1$-spectrum representing motivic
  cohomology over {D}edekind domains}, M{\'e}m. Soc. Math. Fr. \textbf{157}
  (2018), preprint \href{http://arxiv.org/abs/1207.4078}{{\sf arXiv:1207.4078}}

\bibitem[Sta20]{stapleton}
J.~Stapleton, \emph{Weibel's conjecture for twisted {$K$}-theory}, Ann.
  K-Theory \textbf{5} (2020), no.~3, pp.~621--637,
  \url{https://doi.org/10.2140/akt.2020.5.621}

\bibitem[Stacks]{stacks}
{The Stacks Project Authors}, \emph{The Stacks Project}, 2017,
  \url{http://stacks.math.columbia.edu}

\bibitem[TT90]{TT}
R.~W. Thomason and T.~Trobaugh, \emph{Higher algebraic {K}-theory of schemes
  and of derived categories}, The Grothendieck Festschrift III, Progress in
  Mathematics, vol.~88, Birkh{\"a}user, 1990, pp.~247--435

\bibitem[Voe98]{Voevodsky:1998}
V.~Voevodsky, \emph{$\mathbb{A}^1$-Homotopy Theory}, Doc. Math., Extra Volume:
  Proceedings of the International Congress of Mathematicians, Volume I (1998),
  pp.~579--604

\bibitem[Voe01]{voevodsky2001notes}
\bysame, \emph{Notes on framed correspondences}, unpublished, 2001,
  \url{http://www.math.ias.edu/vladimir/files/framed.pdf}

\bibitem[Voe02a]{Voevodsky:2002b}
\bysame, \emph{Motivic Cohomology Groups Are Isomorphic to Higher Chow Groups
  in Any Characteristic}, Int. Math. Res. Notices (2002), no.~7, pp.~351--355,
  preprint \href{http://www.math.uiuc.edu/K-theory/0378}{{\sf K-theory:0378}}

\bibitem[Voe02b]{Voevodsky:2002c}
\bysame, \emph{A possible new approach to the motivic spectral sequence for
  algebraic K-theory}, vol. 293, pp.~371--379, 2002

\bibitem[Voe11]{Voevodsky:2008}
\bysame, \emph{On motivic cohomology with $\mathbb{Z}/l$-coefficients}, Ann.
  Math. \textbf{174} (2011), no.~1, pp.~401--438, preprint
  \href{http://arxiv.org/abs/0805.4430}{{\sf arXiv:0805.4430}}

\end{thebibliography}
}

\end{document}